\numberwithin{equation}{section}
\numberwithin{equation}{section}
\newtheorem{theorem}{Theorem}[section] %
\newtheorem{lemma}{Lemma}[section] %
\newtheorem{definiton}{Definition}[section] %
\newtheorem{remark}{Remark}[section] %
\title{Wong-Zakai type approximations of rough random dynamical systems by smooth noise
%Rough Path Theory to Wong-Zakai Approximation for Fractional Stochastic Differential Equations
%  with Hurst Index $H\in (\frac{1}{3}, \frac{1}{2})$
\thanks
{This work is supported in part by a NSFC Grant No. 12171084 and the fundamental Research Funds for the Central Universities No. 2242022R10013.}}
\author{Qiyong Cao$^1$\thanks{E-mail: xjlyysx@163.com}, Hongjun Gao$^2$\thanks{E-mail: gaohj@hotmail.com, corresponding author.} and Bj\"{o}rn Schmalfuss$^3$\thanks{E-mail:  bjoern.schmalfuss@uni-jena.de}\\ {\small \it 1. School of Mathematical Sciences, Nanjing Normal University, Nanjing 210023, P. R. China}\\
 {\small \it $2$. School of Mathematics, Southeast University, Nanjing 211189, P. R. China}
\\ {\small \it $3.$ Institute of Stochastics,
Friedrich-Schiller-University, D-07743 Jena, Germany}}
\date{}
\begin{document}

\maketitle
\abstract{This paper is devoted to the smooth and stationary  Wong-Zakai approximations for a class of rough differential equations driven by a geometric fractional Brownian rough path $\boldsymbol{\omega}$ with Hurst index $H\in(\frac{1}{3},\frac{1}{2}]$. We first construct the approximation $\boldsymbol{\omega}_{\delta}$ of $\boldsymbol{\omega}$ by probabilistic arguments, and then using the rough path theory to obtain the Wong-Zakai approximation for the solution on any finite interval. Finally, both the original system and approximative system  generate  a continuous  random dynamical systems $\varphi$ and $
\varphi^{\delta}$. As a consequence of the Wong-Zakai approximation of the solution,  $\varphi^{\delta}$ converges to $\varphi$ as $\delta\rightarrow 0$.}
%The rough differential equation driven by a Gaussian rough path
%$dy=(Ay+f(y))dt+g(y)d \boldsymbol{\omega}$ is considered,  where
%$\boldsymbol{\omega}=(\omega^{1},\omega^{2})$ is a rough path which is
%the canonical lift of fractional Brownian motion with values in $R^{d}$
%and with Hurst parameter $H\in(\frac{1}{3},\frac{1}{2})$. Under
%particular conditions on the matrix $A$, the global Lipschitz continuity
%for the nonlinearity $f$ and the diffusion coefficient $g\in
%C_{b}^{3}(R^{m},R^{m\times d})$, the rough differential equation has a
%unique global solution. In addition this equation generates a random
%dynamical system $\varphi$. We also consider an approximate equation
%$dy^{\delta}=(Ay^{\delta}+f(y^{\delta}))dt+g(y^{\delta})d
%\boldsymbol{\omega}^{\delta}$ for this rough equation.
%$\boldsymbol{\omega}^{\delta}$ is a smooth rough path approximating
%$\boldsymbol{\omega}$. In addition, the approximate equation  also
%generates a random dynamical system $\varphi^{\delta}$ and we can obtain
%$\varphi^{\delta}$ converges to $\varphi$ by constructing the Wong-Zakai approximation of the solution.
%Under particular
%conditions on the coefficients the random dynamical system generated by
%the rough equation has a random pullback attractor. The same is true for
%the random dynamical system generated by the approximate equations.
%Finally,  the upper semicontinuity of random pullback attractor between
%$\mathcal{A}_{\delta}$ and $\mathcal{A}$ is obtained.

{\bf 2020 Mathematics Subject Classification:} Primary, 60H110,34F05;
Secondary, 37H05

{\bf Keywords:} rough path theory; Wong-Zakai approximation; random
dynamical system
\section{Introduction}\label{section-introduction}
In this paper, we  consider the following rough differential equation
\begin{equation}
dy=(Ay+f(y))dt+g(y)d \boldsymbol{\omega},
\end{equation}
where $\boldsymbol{\omega}$ is a geometric fractional Brownian rough  path, %The drift term is given
%by a mapping $f:\mathbb{R}^{m}\rightarrow \mathbb{R}^{m}$ and $A$ is a
%matrix on $\mathbb{R}^{m\times m}$ and the diffusion term is a mapping
%$g:\mathbb{R}^{m}\rightarrow \mathbb{R}^{m\times d}$.
and $f,A,g$ are defined below.

\smallskip

For the study of the dynamics of stochastic differential equations,
there are many well known methods. One of methods  is based on an approximation argument.
In this paper, we consider stochastic differential equations with a smooth driver to approximate the original equation. Our idea is to follow
Wong and Zakai \cite{MR195142,MR0183023}.  Wong and Zakai  studied  the
piecewise linear approximations and the piecewise smooth approximations for
a one dimensional Brownian motion.  Their work was extended to higher dimension Brownian motion \cite{MR0402921,MR0400425,MR0458587,MR1011252,MR3456344}. However,  the solutions of  approximated equations do not generate continuous random dynamical systems.

\smallskip
We should keep a particular approximation scheme\cite{MR2831717,MR2727178} for the driving process in mind.
Let $(\Omega,\mathcal{F},\mathbb{P})$ be a
probability space and $\theta$ be a Wiener shift over
$(\Omega,\mathcal{F},\mathbb{P})$.
$\mathcal{G}_{\delta}(\theta_{t}\omega)$ is an  approximative process of
white noise which has the following form:
\begin{equation*}
\mathcal{G}_{\delta}(\theta_{t}\omega)=\frac{1}{\delta}\theta_{t}\omega(\delta)=\frac{1}{\delta}(\omega_{t+\delta}-\omega_{t}).
\end{equation*}
In addition, let
\begin{equation*}
W_{\delta}(t,\omega)=\frac{1}{\delta}\int_{0}^{t}\theta_{s}\omega_\delta ds,
\end{equation*}
it is a smooth Gaussian process with stationary increments and approximates Brownian motion.
%This approximation has been introduced by Lu and
%Wang\cite{MR2831717,MR2727178} for a one dimensional Brownian motion.
%They proved  that the solutions of approximate equations converge to
%the associated Stratonovich stochastic differential equation in the
%sense of mean square.  Shen and Lu\cite{MR3680943} extended this
%kind of Wong-Zakai approximation to the  higher dimension case. They
%obtained  similar convergence results.

\smallskip
Brownian motion has many nice
properties, such as independent increments, Markov property, martingale property. But the
fractional Brownian motion does not have these
properties. It is neither a Markov process or a martingale and
increments are not independent. Thus the Wong-Zakai approximation of the
fractional Brownian motion is  worth being studied. There are
interesting results for the  Wong-Zakai approximations of the fractional
Brownian motion \cite{MR2486926,MR1780221,MR2471936,MR3161527,MR3501366}. %for Hurst index $H\in(\frac{1}{2},1)$, for $H\geq\frac{1}{2}$, Coutin and Qian\cite{MR1780221},  Hu
%and Nualart\cite{MR2471936}, Friz
%and Riedel \cite{MR3161527} and Bayer et al.\cite{MR3501366}. %The first paper dealing with this approximation  is by
%Tudor\cite{MR2486926}. The author applied the integral
%representation of a fractional Brownian motion with Hurst index
%$\frac{1}{2}< H<1 $ and its piecewise linear approximation to study the
%Wong-Zakai approximation for a class of It\^{o}-Volterra equations. For
%$H<\frac{1}{2}$, rough path theory is the main tool to investigate the
%Wong-Zakai approximations and  applied it  to a
%stochastic differential equations driven by a fractional Brownian motion.
%Coutin and Qian\cite{MR1780221} constructed  a Wong-Zakai approximation
%of a fractional Brownian rough path  by dyadic approximation schemes. Hu
%and Nualart\cite{MR2471936} based on fractional calculus estimated the
%rate of convergence of Wong-Zakai approximations for a fractional Brownian
%motion with Hurst index $\frac{1}{3}<H<\frac{1}{2}$. Furthermore, Friz
%and Riedel \cite{MR3161527} and Bayer et al.\cite{MR3501366} obtained
%the convergence of the Wong-Zakai approximation for a fractional Brownian
%rough with Hurst index $\frac{1}{4}<H<\frac{1}{2}$ in the pathwise
%and probabilistic sense.

\smallskip

Based on the articles we mentioned here, it is a natural question to
consider the Wong-Zakai approximation of the geometric fractional Brownian rough
path $\boldsymbol{\omega}=(\omega^{1},\omega^{2})$. Thus our plan is to
consider
$\boldsymbol{\omega}_{\delta}=(W_{\delta}(\cdot,\omega^{1}),\mathbb{W}_{\delta}(\omega^{1}))$,
where
\begin{equation*}
	\begin{aligned}
W^{i}_{\delta}(t,\omega^{1})&=\frac{1}{\delta}\int_{0}^{t}\theta_{r}\omega^{1,i}dr,\\
	\mathbb{W}^{i,j}_{\delta}(\omega^{1})_{s,t}&=\int_{s}^{t}W^{i}_{\delta}(\cdot,\omega^{1})_{s,r}dW^{j}_{\delta}(r,\omega^{1})
	\end{aligned}
\end{equation*}
  for $1\leq i,j\leq d$, and $s<t\in[-T,T]$. Gao et al.\cite{MR4266219}
established the  Wong-Zakai approximation of the Brownian rough path, namely
$H=\frac{1}{2}$. One of our main purposes  is to extend these results to the
fractional Brownian rough path with Hurst index
$\frac{1}{3}<H<\frac{1}{2}$. The main difficulty  is that we do not have
the concavity of functions
$\sigma_{W^{i}_{\delta}(\cdot,\omega^{1})}(u)=\mathbb{E}(W^{i}_{\delta}(t+u,\omega^{1})-W^{i}_{\delta}(t,\omega^{1}))^{2}$
and
$\sigma_{X_{\delta}^{i}}(u)=\mathbb{E}(X_{\delta}^{i}(t+u)-X_{\delta}^{i}(t))^{2}$,
$X_{\delta}^{i}(t)=\omega^{1}(t)-W_{\delta}(t,\omega^{1})$.  In order to
overcome this difficulty, we make use of the properties of the fractional
Brownian motion to construct the convergence between
$\boldsymbol{\omega}$ and $\boldsymbol{\omega}_{\delta}$ in the sense of
almost surely. Furthermore, compared with \cite{MR4266219},  our result is more general. That is to say, we get $\boldsymbol{\omega}_{\delta}\rightarrow \boldsymbol{\omega}$ as $\delta\rightarrow 0$ rather than $\boldsymbol{\omega}_{\delta_{i}}\rightarrow \boldsymbol{\omega}$ as $i\rightarrow \infty$, where  the sequence $\{\delta_{i}\}_{i\in\mathbb{N}}$  converges  sufficiently fast to 0 as  $i\rightarrow \infty$,  and the convergence   also  holds  for Hurst index $H\in(\frac{1}{3},\frac{1}{2})$, not just $H=\frac{1}{2}$.

\smallskip
It is very important to study such approximation. Firstly, compared with the piecewise approximations, an advantage of such approximation is that the approximated equations \eqref{5.2} generate a random dynamical system and the solutions of approximated equations \eqref{5.2} converge to  the solution of  rough differential equation \eqref{5.1}. Secondly, the approximation $W_{\delta}(t,\omega)$  has been used for stochastic ordinary  and partial differential equations when noise  is very simple(additive noise or linear multiplicative noise), and the dynamical behaviour of  approximated systems  converge to  the behaviour of the original system, such as,  invariant manifolds\cite{MR3912728,MR3680943}.  So  we can use  $\boldsymbol{\omega}_\delta$ as the approximation of dynamics for rough case. Thirdly, based on the techniques used in this paper, the conjugate transformation method \cite{MR1723992} is not necessary for the convergence of the dynamical behaviour, and more complicated noises can be considered.

\smallskip

Our another main object  is to construct the Wong-Zakai approximation
for rough differential equation \eqref{5.1}. To this end, we  first construct
the Wong-Zakai approximation for a fractional Brownian rough path $\boldsymbol{\omega}$ on any finite interval.
As the byproduct of the Wong-Zakai approximation of the solution, the random dynamical system $\varphi^{\delta}$ generated by \eqref{5.2}
converge to $\varphi$  as $\delta\rightarrow 0$, where $\varphi$ is generated by \eqref{5.1}.  %Compared with \cite{MR4266219}, since our convergence more general for the noise, so  the Wong-Zakai approximation of the solution and random dynamical system are more general.
Finally, for rough differential equations which contain a drift term,  there is no results that the stability of the solution map with respect to the noise, so we dedicate a significant portion of the paper to discuss the Wong-Zakai approximation of the solution.  Friz and Hairer\cite[Theorem 8.15]{MR4174393} shows that flows generated by the solutions of rough differential equations without the drift terms are stable under some  conditions.  For rough differential equations with the drift terms,   Friz and Victoir\cite[Theorem 12.10]{MR2604669} required that the drift term is at least differentiable and its derivative is bounded, but the drift term
 is  only Lipschitz in our paper.  Riedel and Scheutzow\cite[Theorem 3.1, 4.3]{MR3567487} assumed  the drift term is locally Lipschitz  and linear growth,  and the  diffusion term $g(y)$ w.r.t. $y$ is $C^{\gamma}(R^m),\gamma>3$ and its all derivatives $D^ig(y),i=0,1,2,3$ and  $(\gamma-3)$-H\"{o}lder semi-norm  of $D^3g(y)$ are uniform bounded. However, the diffusion term is three times differentiable and all  derivatives are uniform bounded  in our paper.   In addition,  these two theorems are proved in  the sense of Friz-Victoir\cite{MR2604669}, they did not need rough integral to be specified(see \cite[Definition 10.17]{MR2604669}).   Based on those reasons, we give a detailed proof for the Wong-Zakai approximation of the solutions in our paper.

\smallskip

%This type of approximation by $\omega_{\delta}$ has be applied to stochastic ordinary and partial differential equations driven by simple noise(see \cite{MR3680943}). The conjugate transformation is the main tool to deal with the approximate system. However, it is not necessary to consider the second order processes. In our forthcoming paper, we can obtain dynamical behaviours by rough path theory rather than the conjugate transformation.
%Furthermore, we can construct the Wong-Zakai approximations of the dynamics by the result we established here.
%The crucial fact of our purpose is that the approximate equations define
%a random dynamical system which is not the case for approximate
%equations driven by a piecewise smooth or piecewise linerar driving process.
%In view of existing results by Duc\cite{duc2020asymptotic} for rough
%random dynamical systems and their random pullback attractors, our
%purpose is to construct Wong-Zakai approximations of these  attractors.
%Based on existence of a random pullback attractor of original system, we
%try to prove the upper semicontinuity of random pullback attractors  of
%the approximate systems with respect to the  rough system. To this end,
%we first established Wong-Zakai approximation of the solution on any
%fixed interval where  we use the technique of greedy stopping times.

Our paper is organized as follows. In Section 2,  we present the main tools of
rough path theory and random dynamical system.  In Section 3, we analyze  the approximation of the geometric fractional
Brownian motion rough path. In particular, these approximations are smooth
Gauss-processes with stationary increments. %In addition, we prove some
%convergence results
%between the original rough process $\boldsymbol{\omega}$ and its
%approximate process $\boldsymbol{\omega}_{\delta}$.
Section 4 is devoted to the
Wong-Zakai approximation  of  the solution of  the  original rough equation. %and the
%solutions of the approximate equations driven by a smooth Gaussian noise.
The convergence  also holds for the associated random dynamical systems. In an Appendix, we give
some necessary theorems to construct the Wong-Zakai approximation for noises and solutions. Furthermore, the facts in Appendix A are taken from
\cite{MR4174393}.
%In the last Section 6 we  deal with the upper semicontinuity of the
%pullback attractors of the approximate systems with respect to the
%attractors of the original rough system.

\section{Preliminaries}
In this section, we will recall some facts about rough paths and random dynamical systems.  The constant $C$  may change form line to line.  For a compact time interval $I=[T_{1},T_{2}]\subset\mathbb{R}$, we write $|I|=T_{2}-T_{1}$ and $I^{2}=\left\{(s,t)\in I\times I:s\leq t\right\}$. We denote by $C(I;\mathbb{R}^{m})$ the space of all continuous paths $y:I\rightarrow \mathbb{R}^{m}$ equipped with the norm $\|\cdot\|_{\infty,I}$ given by $\|y\|_{\infty,I}=\sup_{t\in I}\|y_{t}\|$, where $\|\cdot\|$ is the  Euclidean norm, and let  $C^1(I,R^{d})$ denote the space of the first order differentiable $R^d$-valued functions on $I$. We write $y_{s,t}=y_{t}-y_{s}$. For $p\geq1$, denote by $C^{p-var}(I;\mathbb{R}^{d})$  all continuous paths $y: I\rightarrow \mathbb{R}^{d}$ which have a finite $p$-variation
\begin{equation}
\begin{aligned}
\interleave y\interleave_{p-var,I}=\left(\sup_{\mathcal{P}(I)}\sum_{[t_{i},t_{i+1}]\in\mathcal{P}(I)}\|y_{t_{i},t_{i+1}}\|^{p}\right)^{\frac{1}{p}}<\infty,
\end{aligned}
\end{equation}
where $\mathcal{P}(I)$ is a  partition of the interval $I$. Furthermore, we equip this space with the norm
\begin{equation}
\begin{aligned}
\|y\|_{p-var,I}:=\|y_{T_{1}}\|+\interleave y\interleave_{p-var,I}.
\end{aligned}
\end{equation}
This norm is equivalent to
 \begin{equation}
\begin{aligned}
\|y\|_{p-var,I}:=\|y\|_{\infty,I}+\interleave y\interleave_{p-var,I}.
\end{aligned}
\end{equation}
For properties of the $p-$variarion norm we refer to \cite{MR2604669}.
\begin{lemma}\label{Lemma 2.1}
Let $\omega\in C^{p-var}([s,t];R^{d})$, $p>1$. For any  partition $\mathcal{P}(s,t)$ of the interval $[s,t]$ given by $s=u_{1}<u_{2}<\cdots<u_{n}=t$, we have
\begin{equation}
\sum_{i=1}^{n-1}\interleave \omega\interleave_{p-var,[u_{i},u_{i+1}]}^{p}\leq \interleave \omega\interleave_{p-var,[s,t]}^{p}\leq(n-1)^{p-1}\sum_{i=1}^{n-1}\interleave \omega\interleave_{p-var,[u_{i},u_{i+1}]}^{p}.
\end{equation}
\end{lemma}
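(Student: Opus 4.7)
The plan is to handle the two inequalities of Lemma 2.1 separately, both by elementary manipulations of sums over partitions.

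For the left-hand inequality, I would fix for each $i\in\{1,\dots,n-1\}$ an arbitrary partition $\mathcal{P}_i$ of $[u_i,u_{i+1}]$. Since the $u_i$ serve as common endpoints of adjacent $\mathcal{P}_i$'s, the disjoint union $\bigcup_i \mathcal{P}_i$ is itself a partition of $[s,t]$. Hence
\begin{equation*}
\sum_{i=1}^{n-1}\sum_{[a,b]\in\mathcal{P}_i}\|\omega_{a,b}\|^{p} \;\le\; \interleave\omega\interleave_{p-var,[s,t]}^{p}.
\end{equation*}
Because every summand is non-negative and the $\mathcal{P}_i$ can be chosen independently, the supremum of the double sum equals the sum of the individual suprema, which gives the first inequality.

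For the right-hand inequality, the strategy is to take an arbitrary partition of $[s,t]$, insert the split points $u_i$ to make it compatible with the sub-intervals, and control the cost of this refinement. Let $\mathcal{P}=\{s=t_0<t_1<\dots<t_m=t\}$ and set $\mathcal{P}':=\mathcal{P}\cup\{u_2,\dots,u_{n-1}\}$. Each original interval $[t_j,t_{j+1}]$ contains at most the $n-2$ interior split points, and so is broken in $\mathcal{P}'$ into at most $n-1$ consecutive sub-pieces $[r^{(j)}_k,r^{(j)}_{k+1}]$. Using the telescoping identity $\omega_{t_j,t_{j+1}}=\sum_k \omega_{r^{(j)}_k,r^{(j)}_{k+1}}$, the triangle inequality, and the elementary power-mean bound $(\sum_{k=1}^{K}a_k)^{p}\le K^{p-1}\sum_{k=1}^{K}a_k^{p}$ with $K\le n-1$, I obtain
\begin{equation*}
\|\omega_{t_j,t_{j+1}}\|^{p} \;\le\; (n-1)^{p-1}\sum_{k}\|\omega_{r^{(j)}_k,r^{(j)}_{k+1}}\|^{p}.
\end{equation*}
Summing over $j$, every element of $\mathcal{P}'$ appears exactly once on the right; regrouping these elements by which $[u_i,u_{i+1}]$ they lie in — noting that $\mathcal{P}'$ restricted to $[u_i,u_{i+1}]$ is itself a partition of that sub-interval — bounds each inner sum by $\interleave\omega\interleave_{p-var,[u_i,u_{i+1}]}^{p}$. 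Taking the supremum over $\mathcal{P}$ then delivers the second inequality.

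The only point that requires care is the combinatorial accounting: one must check that a single $[t_j,t_{j+1}]$ can be split into at most $n-1$ sub-pieces in $\mathcal{P}'$, which is precisely what produces the sharp constant $(n-1)^{p-1}$. Once this is in place, the argument reduces to sub-additivity of increments along a partition together with the discrete power-mean inequality, and no further subtlety is needed.
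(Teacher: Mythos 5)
Your proof is correct. Note that the paper does not actually prove Lemma \ref{Lemma 2.1}: it is stated without proof, with the reader referred to Friz--Victoir \cite{MR2604669} for properties of the $p$-variation norm (it is essentially their Lemma on super-additivity of $\interleave\cdot\interleave_{p-var}^{p}$ over concatenated intervals). Your two-sided argument --- concatenating independently chosen partitions of the subintervals for the lower bound, and refining an arbitrary partition of $[s,t]$ by the points $u_{2},\dots,u_{n-1}$ together with the telescoping identity and the discrete power-mean inequality $(\sum_{k=1}^{K}a_{k})^{p}\le K^{p-1}\sum_{k=1}^{K}a_{k}^{p}$ for the upper bound --- is exactly the standard proof, and the combinatorial count that each $[t_{j},t_{j+1}]$ splits into at most $n-1$ pieces is handled correctly.
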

Moreover, for any $\alpha\in (0,1)$, we denote by $C^{\alpha}(I,\mathbb{R}^{m})$ the space of H\"{o}lder continuous functions with H{\"o}lder exponent $\alpha$, and equipped with the norm
\begin{equation}
\begin{aligned}
\|y\|_{\alpha,I}:=\|y_{T_{1}}\|+\interleave y\interleave_{\alpha,I}
\end{aligned}
\end{equation}
or the equivalent norm
\begin{equation}
\begin{aligned}
\|y\|_{\alpha,I}:=\|y\|_{\infty,I}+\interleave y\interleave_{\alpha,I},
\end{aligned}
\end{equation}
where  $\interleave y\interleave_{\alpha,I}=\sup_{s<t\in I}\frac{\|y_{s,t}\|}{(t-s)^{\alpha}}<\infty$.
\begin{definiton}
For $\alpha\in(\frac{1}{3},\frac{1}{2}]$, A pair $\boldsymbol{X}=(X,\mathbb{X})\in C^{\alpha}(I,\mathbb{R}^{d})\oplus C^{2\alpha}(I^{2},\mathbb{R}^{d\times d})$   is called rough path if it satisfies the analytic relations
\begin{equation}\label{2.8}
\interleave X\interleave_{\alpha}:=\sup_{s<t\in I}\frac{|X_{s,t}|}{|t-s|^{\alpha}}<\infty,\quad \interleave \mathbb{X}\interleave_{2\alpha}:=\sup_{s<t\in I}\frac{|\mathbb{X}_{s,t}|}{|t-s|^{2\alpha}} <\infty
\end{equation}
and Chen's relation
\begin{equation}
\begin{aligned}
\mathbb{X}_{s,t}=\mathbb{X}_{s,u}+\mathbb{X}_{u,t}+X_{s,u}\otimes X_{u,t}
\end{aligned}
\end{equation}
for $s\leq u\leq t\in I$, we denote by $\mathcal{C}^{\alpha}(I,R^{d})$ the space of rough path. In addition, for any $X\in C^1(I,R^d)$, there is a canonical lift $S(X):=(X,\mathbb{X})$ in $\mathcal{C}^{\alpha}(I,R^{d})$  defined as
\begin{equation*}
\mathbb{X}_{s,t}^{k,l}=\int_{s}^{t}\int_{s}^{r}dX^k_{r^\prime}dX^l_r,\quad s<t\in I \text{and} ~ k,l\in\{1,\cdots,d\}.
\end{equation*}
We denote $\mathcal{C}_{g}^{\alpha}(I,R^{d})$ by the geometric rough space, i.e. the closure of  the  canonical lift $S(X),X\in C^{1}(I,R^d)$.
\end{definiton}

The first component $X$ is the path component and the second component $\mathbb{X}$ is called L\'{e}vy area or the
second order process. %If the path $X$ is smooth, we define $\mathbb{X}$ by
%$$\mathbb{X}_{s,t}=\int_{s}^{t}X_{s,r}\otimes X^{'}_{r}dr.$$
%However, $\mathbb{X}$ is viewed to postulate the value of the quantity $\int_{s}^{t}X_{s,r}\otimes dX_{r}:=\mathbb{X}_{s,t}$ where the right hand side is taken as a definition of the left hand side for the rough case. We will give the definition of a second order process of  fractional Brownian motion rough path in the following section.
Let  $\mathcal{C}^{\alpha}(I,\mathbb{R}^{d})$ be equipped with the homogeneous norm\cite[page 18]{MR4174393}
\begin{equation}
\begin{aligned}
\interleave \boldsymbol{X}\interleave_{\alpha,I}=\interleave X \interleave_{\alpha,I}+\interleave \mathbb{X}\interleave^{\frac{1}{2}}_{2\alpha,I^{2}}.
\end{aligned}
\end{equation}
In addition, we can use  the $p$-variation norm\cite[page 165]{MR2604669}
\begin{equation}
\begin{aligned}
\interleave \boldsymbol{X}\interleave_{p-var,I}=\left(\interleave X \interleave^{p}_{p-var,I}+\interleave \mathbb{X}\interleave^{q}_{q-var,I^{2}}\right)^{\frac{1}{p}}
\end{aligned}
\end{equation}
to describe a rough path $\boldsymbol{X}$, where $\interleave \mathbb{X}\interleave_{q-var,I^{2}}=\left(\sup_{\mathcal{P}(I)}\sum_{[t_{i},t_{i+1}]\in \mathcal{P}(I)}\|\mathbb{X}_{t_{i},t_{i+1}}\|^{q}\right)^{\frac{1}{q}}$ and $q=\frac{p}{2},p\geq 1$. Let  $\mathcal{C}^{p-var}(I,R^{d})$ denote   the space of all rough paths which have a finite $p$-variation norm. It is clear that $\mathcal{C}^{\alpha}(I,\mathbb{R}^{d})\subset \mathcal{C}^{p-var}(I,R^{d})$ for $p=\frac{1}{\alpha}$.

%Finally, this section ends with introducing the notion of a random dynamical system.
\begin{definiton}
Let $(\Omega,\mathcal{F},\mathbb{P})$ be a probability space, we say the quadruple $(\Omega,\mathcal{F},\mathbb{P},(\theta_{t})_{t\in\mathbb{R}})$ is a metric dynamical system over $(\Omega,\mathcal{F},\mathbb{P})$, if the map $\theta:\mathbb{R}\times\Omega\rightarrow \Omega$ satisfies the following conditions
\begin{description}
  \item[1] the map $(t,\omega)\mapsto \theta_{t}\omega$ is measurable for $(\mathcal{B}(\mathbb{R})\otimes\mathcal{F},\mathcal{F})$;
  \item[2] $\theta_{0}=Id,\theta_{t}\circ\theta_{s}=\theta_{t+s}, t,s\in\mathbb{R}$;
  \item[3] $\mathbb{P}(\theta_{t}^{-1}B)=\mathbb{P}(B), B\in\mathcal{F},t\in\mathbb{R}$.
\end{description}
Furthermore, if for any $\theta$-invariant set $B\in\mathcal{F}$, namely, $\theta_{t}^{-1}B=B,t\in \mathbb{R},B\in\mathcal{F}$, we have that $P(B)=0$ or $P(B)=1$, then the metric dynamical system $(\Omega,\mathcal{F},\mathbb{P},(\theta_{t})_{t\in\mathbb{R}})$ is called ergodic and the measure  $\mathbb{P}$ is an ergodic measure.
\end{definiton}

\begin{remark}\label{remark 2.1}
It is well known that the canonical fractional Brownian motion $B^{H}(t,\omega):=\omega_{t}$ in $R^{d}$  with Hurst index $H\in(0,1)$  is a centered, continuous Gaussian process with stationary increments and  covariance
$$R_{B^{H}}(s,t)=\frac{1}{2}(|t|^{2H}+|s|^{2H}-|t-s|^{2H})Id,\quad t,s\in R,$$
where $Id$ is the identity matrix. Firstly, we  consider the quadruple $(C_{0}(R,R^{d}),\\\mathcal{B}(C_{0}(R,R^{d})),\mathbb{P}_{H},\theta)$, where $C_{0}(R,R^{d})$ is the space of continuous functions which are zero at zero, equipped with the compact open topology.  $\mathcal{B}(C_{0}(R,R^{d}))$  is the Borel $\sigma$-algebra of $C_{0}(R,R^{d})$. $\mathbb{P}_{H}$ is the Gaussian distribution of the fractional Brownian motion $B^{H}$. $\theta_{t}$ is the so-called Wiener shift $\theta_{t}\omega(\cdot)=\omega(\cdot+t)-\omega(t)$. Then $(C_{0}(R,R^{d}),\mathcal{B}(C_{0}(R,R^{d})),\mathbb{P}_{H},\theta)$ is an ergodic metric dynamical system, see   \cite{MR2836654,MR1893308,MR3226746}. Secondary, a fractional Brownian motion has a H\"{o}lder continuous version. Then we have $\theta$-invariant set $\Omega$ of full measure  such that $\omega\in\Omega$ is
$\beta$-H\"{o}lder continuity $(\beta<H)$ on any interval $[-T,T]$. Let $\mathcal{F}=\Omega\bigcap \mathcal{B}(C_{0}(R,R^{d}))$, we still use the symbol $\mathbb{P}_{H}$ which is the restriction of $\mathbb{P}_{H}$ on $\mathcal{F}$. Then $(\Omega,\mathcal{F},\mathbb{P}_{H},\theta)$ is also an ergodic metric dynamical system.
\end{remark}
\begin{definiton}
Let $(\Omega,\mathcal{F},\mathbb{P},\theta)$  be a metric dynamical system. We call $\varphi:\mathbb{R}^{+}\times\Omega\times\mathbb{R}^{m}\rightarrow \mathbb{R}^{m}$  a random dynamical system on $(\Omega,\mathcal{F},\mathbb{P},\theta)$, if the following conditions hold
\begin{itemize}
  \item the mapping  $(t,\omega,x)\mapsto \varphi(t,\omega,x)$ is $(\mathcal{B}(R^{+})\otimes \mathcal{F}\otimes \mathcal{B}(R^{m}),\mathcal{B}(R^{m}))$-measurable;
  \item $\varphi(0,\omega,\cdot)=Id$ for any $\omega\in\Omega$;
  \item $\varphi(t_{1}+t_{2},\omega,\cdot)=\varphi(t_{2},\theta_{t_{1}}\omega,\varphi(t_{1},\omega,\cdot))$ for $t_{1},t_{2}\in \mathbb{R}^{+},\omega\in\Omega$.
\end{itemize}
\end{definiton}

\section{Approximation of the fractional Brownian motion by a smooth
process with stationary increments}\label{section-intersection}\label{approxiamtion of  the noise}
In this section, we will introduce an approximation scheme for the geometric
fractional Brownian rough path. Furthermore, the convergence between the
smooth rough path generated by the approximate process of fractional
Brownian motion and the rough path generated by the canonical lift of
the fractional Brownian motion is considered.

Let $X_{t}$ be a continuous, centered Gaussian process with values in $R^{d}$. For process $X_{t}$, its covariance can be represented as follows
\begin{equation*}
R_{X}(s,t)=EX_{s}\otimes X_{t}.
\end{equation*}
The rectangular increments of the covariance $R_{X}$  for   $u\leq v$ and   $ u^{\prime}\leq v^{\prime}$ are defined by
\begin{equation*}
R_{X}\left(\begin{array}{cc}
	u & v \\
	u^{\prime} & v^{\prime}
\end{array}\right):=\mathbb{E}(X(v)-X(u))\otimes\left(X\left(v^{\prime}\right)-X\left(u^{\prime}\right)\right).
\end{equation*}
We define the $\rho$-variation of  $R_{X}$ for $2>\rho\geq 1$ on the interval $[s,t]^{2}$ as follows:
\begin{equation*}
\|R_{X}\|_{\rho-var;[s,t]^{2}}:=\left(\sup_{\mathcal{P}(s,t)}\sup_{\mathcal{P}^{\prime}(s,t)}\sum_{[u,v]\in\mathcal{P}}\sum_{[u^{\prime},v^{\prime}]\in\mathcal{P}}\left|R_{X}\left(\begin{array}{cc}
	u & v \\
	u^{\prime} & v^{\prime}
\end{array}\right)\right|^{\rho}\right)^{\frac{1}{\rho}}.
\end{equation*}
In order to calculate  the distance between $\boldsymbol{\omega}$ and
$\boldsymbol{\omega}_{\delta}$, we need the following inhomogeneous
rough path metric:
\begin{equation*}
\rho_{\alpha,[-T,T]}(\mathbf{X},\mathbf{Y}):=\sup_{s\neq
	t\in[-T,T]}\frac{|X_{s,t}-Y_{s,t}|}{|t-s|^{\alpha}}+\sup_{s\neq
	t\in[-T,T]}\frac{|\mathbb{X}_{s,t}-\mathbb{Y}_{s,t}|}{|t-s|^{2\alpha}},
\end{equation*}
for $\mathbf{X},\mathbf{Y}\in\mathcal{C}^{\alpha}([-T,T];R^{d})\subset
\mathcal{C}^{p-var}([-T,T];R^{d})$, $p=\frac{1}{\alpha},\alpha\in\left(\frac{1}{3},\frac{1}{2}\right)$. In Section 4, we shall use the
$p$-variation metric, namely, for $p=\frac{1}{\alpha}$ and $q=\frac{p}{2}$,
\begin{equation*}
	\begin{aligned}
\rho_{p-var,[-T,T]}(\mathbf{X},\mathbf{Y}):&=\left(\sup_{\mathcal{P}([-T,T])}\sum_{[u,v]\in\mathcal{P}([-T,T])}|X_{s,t}-Y_{s,t}|^{p}\right)^{\frac{1}{p}}\\
	&+\left(\sup_{\mathcal{P}([-T,T]) }\sum_{[u,v]\in\mathcal{P}([-T,T])}|\mathbb{X}_{s,t}-\mathbb{Y}_{s,t}|^{q}\right)^{\frac{1}{q}}.
	\end{aligned}
\end{equation*}
Theorem \ref{theorem 3.1} shows that we can find a rough path $\boldsymbol{\omega}=(\omega^1,\omega^2)\in \mathcal{C}^{\beta}_g([0,T];R^d)$ for any $T>0$	, it  can further extend to  $\boldsymbol{\omega}\in \mathcal{C}^{\beta}_g([-T,T];R^d)$. Moreover,  Theorem \ref{theorem 3.1} also ensures that we can define a  version
\begin{equation*}
\theta_\tau\boldsymbol{\omega}:=(\theta_\tau\omega^1,\theta_{\tau}\omega^2)\in \mathcal{C}_{g}^{\beta}([-T,T];R^d),\quad a.s.
\end{equation*}
for any $T>0,\tau\in R$, where $\theta_{\tau}\omega^2$ represents the second order process of the path $\theta_{\tau}\omega^1$.       As in \cite{MR3680943} and
\cite{MR3912728}, for any $\delta\in (0,1)$, we define a random variable
$\mathcal{G}_{\delta}: \Omega \rightarrow \mathbb{R}^{d}$
\begin{equation*}
\mathcal{G}_{\delta}(\omega^1)=\frac{1}{\delta} \omega^1(\delta).
\end{equation*}
Then we have
\begin{equation*}
	\mathcal{G}_{\delta}\left(\theta_{t}
	\omega^1\right)=\frac{1}{\delta}(\omega^1(t+\delta)-\omega^1(t)),\quad t\in R.
\end{equation*}
By the the properties of the fractional Brownian motion, it follows that
$\mathcal{G}_\delta(\theta_{t}\omega)$ is a stochastic process on $(\Omega,\mathcal{F},\mathbb{P}_{H},\theta)$.
Let
\begin{equation*}
W_{\delta}(t,\omega^1):=\int_{0}^{t}\mathcal{G}_{\delta}(\theta_{s}\omega^1)ds,\quad t\in R.
\end{equation*}
$W_{\delta}(t,\omega^1)$ may be viewed as an approximation of the
fractional Brownian motion, and it is a stochastic process  with stationary increments on $(\Omega,\mathcal{F},\mathbb{P}_{H},\theta)$.  Furthermore,
\begin{equation*}
\mathbb{W}_{\delta}(\omega^1)_{s,t}:=\int_{s}^{t}W_{\delta}(\cdot,\omega^1)_{s,r}\otimes
dW_{\delta}(r,\omega^1), \quad s,t\in R
\end{equation*}
is a smooth second order process  on $(\Omega,\mathcal{F},\mathbb{P}_{H},\theta)$. It is well defined as a Riemann-Stieljes
integral. For $\tau \in \mathbb{R}, s \leq t \in \mathbb{R}, \omega \in \Omega$, we also define
\begin{equation*}
\theta_{\tau}\mathbb{W}_{\delta}(\omega^1)_{s,t}:=\int_{s}^{t}\left(\theta_{\tau} W_{\delta}(r,\omega^1)-\theta_{\tau} W_{\delta}(s,\omega^1)\right) \otimes d \theta_{\tau}W_{\delta}(r,\omega^1).
\end{equation*}
Then it is easy to see that
\begin{equation*}
	\theta_{\tau}\mathbb{W}_{\delta}(\omega^1)_{s,t}=\mathbb{W}_{\delta}(\theta_{\tau}\omega^1)_{s,t}=\mathbb{W}_{\delta}(\omega^1)(s+\tau, t+\tau) .
\end{equation*}
Thus, we will consider the convergence of the smooth rough paths
$\boldsymbol{\omega}_{\delta}=\left(W_{\delta}(\cdot,\omega^{1}), \mathbb{W}_{\delta}(\omega^{1})\right)$
generated by the approximative process $W_{\delta}(\cdot,\omega^{1})$.
%Finally, we emphasize that Theorem \ref{theorem 3.1} shows that the rough path $\theta_{\tau}\boldsymbol{\omega}=(\theta_{\tau}\omega^{1},\theta_{\tau}\omega^{2})$ and $\theta_{\tau}\omega^{2}$ is generated by the path $\theta_{\tau}\omega^{1}$ for $\omega^{1}\in \Omega$.

We shall use the following result to compare the distance of two
different rough paths, see \cite[Theorem 3.3]{MR4174393}.
\begin{lemma}\label{theorem 4.1}
Let $q^{\prime}\geq 2$, $\frac{1}{2\rho}>\frac{1}{q^{\prime}}$. Assume, for all
$s,t\in[-T,T]$ and some constant $C>0$ both $(X,\mathbb{X})$ and $(Y,\mathbb{Y})$
satisfy
\begin{equation*}
\begin{aligned}
&|X_{s,t}|_{L^{q^{\prime}}}\leq C|t-s|^{\frac{1}{2\rho}}\quad
|\mathbb{X}_{s,t}|_{L^{\frac{q^{\prime}}{2}}}\leq C|t-s|^{\frac{1}{\rho}}
\quad\text{and}\\
&|Y_{s,t}|_{L^{q^{\prime}}}\leq C|t-s|^{\frac{1}{2\rho}}\quad
|\mathbb{Y}_{s,t}|_{L^{\frac{q^{\prime}}{2}}}\leq C|t-s|^{\frac{1}{\rho}}.
\end{aligned}
\end{equation*}
Further, let
\begin{equation*}
\triangle X :=X-Y, \quad\triangle \mathbb{X}=\mathbb{X}-\mathbb{Y}.
\end{equation*}
For some $\epsilon>0$ and $s,t\in [-T,T]$, we have the following relation
\begin{equation*}
\begin{aligned}
|\triangle X_{s,t}|_{L^{q^{\prime}}}\leq C\epsilon |t-s|^{\frac{1}{2\rho}},\quad
|\triangle \mathbb{X}_{s,t}|_{\frac{q^{\prime}}{2}}\leq C\epsilon
|t-s|^{\frac{1}{\rho}}.
\end{aligned}
\end{equation*}
Then there exists a constant $M>0$ depending on $C$, such that
\begin{equation*}
	|\interleave\triangle X\interleave_{\alpha}|_{L^{q^{\prime}}}\leq
	M\epsilon,\quad
	|\interleave\triangle\mathbb{X}\interleave_{2\alpha}|_{L^{\frac{q^{\prime}}{2}}}\leq
	M\epsilon.
\end{equation*}
Furthermore, if $\frac{1}{2\rho}-\frac{1}{q^{\prime}}>\frac{1}{3}$ then, for each $\alpha\in(\frac{1}{3},\frac{1}{2\rho}-\frac{1}{q^{\prime}})$ we have
\begin{equation*}
	\interleave\mathbf{X}\interleave_{\alpha},\interleave\mathbf{Y}\interleave_{\alpha}\in
	L^{q^{\prime}}\quad \text{and}\quad |\rho_{\alpha,[-T,T]}(\mathbf{X},
	\mathbf{Y})|_{L^{\frac{q^{\prime}}{2}}}\leq M\epsilon.
\end{equation*}
\end{lemma}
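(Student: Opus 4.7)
The plan is to apply a Kolmogorov-type continuity criterion (the Garsia--Rodemich--Rumsey inequality, hereafter GRR) separately at each level of the rough path, which is the standard mechanism behind this kind of Friz--Hairer estimate. At the first level, the assumption $|\triangle X_{s,t}|_{L^{q'}}\leq C\epsilon |t-s|^{1/(2\rho)}$ is precisely a moment estimate on increments, and GRR upgrades this to a statement about the H\"older semi-norm: for any $\alpha<\frac{1}{2\rho}-\frac{1}{q'}$ one obtains
\begin{equation*}
\bigl|\interleave\triangle X\interleave_{\alpha,[-T,T]}\bigr|_{L^{q'}}\leq M\epsilon,
\end{equation*}
with $M=M(C,\rho,q',T)$. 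The key analytic input is that the exponent $\frac{1}{2\rho}-\frac{1}{q'}$ is strictly positive, which is exactly the running hypothesis $\frac{1}{2\rho}>\frac{1}{q'}$; since the right-hand side is linear in $\epsilon$, the $M\epsilon$ scaling passes through the GRR inequality unchanged.

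At the second level, one plays the same game with $\triangle\mathbb{X}_{s,t}$, viewed as a two-parameter process on $[-T,T]^2$. Applying the two-parameter version of GRR (or, equivalently, fixing $s$, propagating along $t$ via Chen's relation, and then interchanging the roles) to the bound $|\triangle\mathbb{X}_{s,t}|_{L^{q'/2}}\leq C\epsilon|t-s|^{1/\rho}$ yields, for the same range of $\alpha$,
\begin{equation*}
\bigl|\interleave\triangle\mathbb{X}\interleave_{2\alpha,[-T,T]^2}\bigr|_{L^{q'/2}}\leq M\epsilon,
\end{equation*}
since $2\alpha<\frac{1}{\rho}-\frac{2}{q'}$ is the same condition as $\alpha<\frac{1}{2\rho}-\frac{1}{q'}$. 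This proves the first block of conclusions. For the stronger statement, I would rerun the very same GRR argument on $X$, $Y$ themselves (and on $\mathbb{X}$, $\mathbb{Y}$) to deduce $\interleave X\interleave_\alpha,\interleave Y\interleave_\alpha\in L^{q'}$ and the analogous $L^{q'/2}$-integrability at the second level; the extra assumption $\frac{1}{2\rho}-\frac{1}{q'}>\frac{1}{3}$ is precisely what guarantees the window $\alpha\in(\frac{1}{3},\frac{1}{2\rho}-\frac{1}{q'})$ is non-empty, so $\mathbf{X}$ and $\mathbf{Y}$ are \emph{bona fide} rough paths in $\mathcal{C}^\alpha$. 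The metric $\rho_{\alpha,[-T,T]}$ decomposes as the sum of $\interleave\triangle X\interleave_\alpha$ and $\interleave\triangle\mathbb{X}\interleave_{2\alpha}$, and since on a finite measure set $L^{q'}\hookrightarrow L^{q'/2}$, a triangle inequality assembles the two estimates into $|\rho_{\alpha,[-T,T]}(\mathbf{X},\mathbf{Y})|_{L^{q'/2}}\leq M\epsilon$.

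The main obstacle is the second-level GRR step. Because $\triangle\mathbb{X}$ is genuinely a function of two time variables and satisfies Chen's relation rather than simple additivity, one cannot literally invoke the one-parameter GRR inequality; one must either cite its two-parameter analogue or carefully reduce to the diagonal by exploiting $\mathbb{X}_{s,t}=\mathbb{X}_{s,u}+\mathbb{X}_{u,t}+X_{s,u}\otimes X_{u,t}$ to control arbitrary rectangular increments by dyadic refinements of $[s,t]$. Once this technical point is handled, the matching of H\"older exponents between the two levels ($\alpha$ at level one, $2\alpha$ at level two) is automatic and the $\epsilon$-linearity propagates cleanly, which is why the hypothesis $\frac{1}{2\rho}-\frac{1}{q'}>\frac{1}{3}$ suffices to close the argument.
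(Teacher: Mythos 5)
Your proposal is correct and follows essentially the same route as the source of this lemma: the paper does not prove it but cites it directly as \cite[Theorem 3.3]{MR4174393}, whose proof is precisely the Kolmogorov/GRR dyadic argument you describe, applied level by level, with the second level reduced to the first via Chen's relation (the cross term $X_{s,u}\otimes X_{u,t}-Y_{s,u}\otimes Y_{u,t}=\triangle X_{s,u}\otimes X_{u,t}+Y_{s,u}\otimes\triangle X_{u,t}$ being what preserves the $\epsilon$-linearity there). The technical obstacle you flag at the second level is exactly the one the cited proof handles by working on dyadic partitions, so your sketch matches the intended argument.
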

\begin{remark}
The $L^{q^{\prime}}$ and $L^{\frac{q^{\prime}}{2}}$-estimates in the above theorem
 can be derived from the $L^{2}$ estimates. The reason
for that is that we consider a Gaussian process,  which moment norms of
each order are equivalent in chaos spaces\cite[Theorem 2.7.2,
hypercontractivity ]{MR2962301}.
\end{remark}
Before establishing the approximation of a fractional
Brownian rough path by a smooth path, we first present the following lemma.
\begin{lemma}\label{theorem 4.2}
For a geometric  fractional Brownian  rough path $\boldsymbol{\omega}:=(\omega^{1},\omega^{2}) $
with Hurst index
$H\in(\frac{1}{3},\frac{1}{2}]$, we have the following estimates
\begin{equation*}
\begin{aligned}
&|\omega_{t}^{1,i}-\omega^{1,i}_{s}|_{L^{q^{\prime}}}\leq
C(T,q^{\prime},H)(t-s)^{\beta^{\prime}},\\
&|\omega_{s,t}^{2,i,j}|_{L^{\frac{q^{\prime}}{2}}}\leq
C(T,q^{\prime},H,\beta^{\prime})(t-s)^{2\beta^{\prime}},
\end{aligned}
\end{equation*}
where $\frac{1}{3}<\beta^{\prime}<H$,   $q^{\prime}\geq 2$, $1\leq i,j\leq d$
and $0\leq s<t\leq T$.
\end{lemma}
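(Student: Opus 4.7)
The plan is to reduce both bounds to their $L^2$ counterparts via Gaussian hypercontractivity: $\omega^{1,i}_t - \omega^{1,i}_s$ lies in the first Wiener chaos and $\omega^{2,i,j}_{s,t}$ in the second (being a quadratic functional of the Gaussian path $\omega^1$), so on the $k$-th chaos one has $|Z|_{L^{q^{\prime}}} \leq C(q^{\prime},k)|Z|_{L^2}$, exactly as recalled in the remark preceding the statement. It therefore suffices to establish $|\omega^{1,i}_{t}-\omega^{1,i}_{s}|_{L^2} \leq C(T,H,\beta^{\prime})(t-s)^{\beta^{\prime}}$ and $|\omega^{2,i,j}_{s,t}|_{L^2} \leq C(T,H,\beta^{\prime})(t-s)^{2\beta^{\prime}}$ on $[0,T]$, and then absorb chaos-dependent constants into the $C(T,q^{\prime},H)$ and $C(T,q^{\prime},H,\beta^{\prime})$ respectively.

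For the first estimate, the explicit covariance of fractional Brownian motion recorded in Remark 2.1 gives
\[
\mathbb{E}|\omega^{1,i}_t-\omega^{1,i}_s|^2 = (t-s)^{2H}.
\]
Since $0 \le t-s \le T$ and $H > \beta^{\prime}$, I would write $(t-s)^{2H} \le T^{2(H-\beta^{\prime})}(t-s)^{2\beta^{\prime}}$; hypercontractivity then yields the claim with constant $C(q^{\prime})T^{H-\beta^{\prime}}$. For the second estimate, the key input is the classical $L^2$ bound
\[
\mathbb{E}|\omega^{2,i,j}_{s,t}|^2 \le C(H)(t-s)^{4H}
\]
for the L\'evy area of fBm with $H \in (1/3,1/2]$. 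This is derived from the fact that the covariance $R_{\omega^1}$ has finite $\rho$-variation for $\rho = 1/(2H) < 2$, which allows the iterated integral to be constructed via 2D Young--Wiener integration and controlled by $\|R_{\omega^1}\|_{\rho\text{-var};[s,t]^2}^2 \le C(t-s)^{2/\rho} = C(t-s)^{4H}$. I would invoke the standard statements from the rough-path literature (e.g.\ Friz--Hairer, Proposition~10.9 and Theorem~10.41, or Friz--Victoir, Theorem~15.33) rather than rederive them. Applying hypercontractivity on the second chaos and absorbing $(t-s)^{2(H-\beta^{\prime})} \le T^{2(H-\beta^{\prime})}$ then gives the stated bound.

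The main obstacle is the $L^2$ estimate on the L\'evy area: the reduction via hypercontractivity is mechanical and the first-chaos variance is direct from the covariance formula, but the second-order bound genuinely uses the Gaussian rough-path construction through the $\rho$-variation of the covariance of fBm with $H>1/3$. This is precisely the threshold at which such a construction is possible, and it is the technical backbone for every moment estimate used later in the paper. I would therefore state this bound as a quotation from the rough path literature and focus the written proof on the algebraic passage from $L^2$ to $L^{q^{\prime}}$ and $L^{q^{\prime}/2}$ via hypercontractivity, and on the factor $T^{H-\beta^{\prime}}$ that arises from trading $H$ for $\beta^{\prime}<H$.
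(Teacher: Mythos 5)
Your proposal is correct and follows essentially the same route as the paper: the authors likewise reduce to $L^{2}$ via hypercontractivity, obtain the first-level bound directly from the fBm covariance, and obtain the L\'evy-area bound from the finite $\tfrac{1}{2H}$-variation of the covariance (their Lemma \ref{lemma 3.2}, which uses concavity of $u\mapsto u^{2H}$ for $H\le\tfrac12$) combined with the 2D Young--Wiener estimate of Lemma \ref{lemma 3.1}, finally trading $(t-s)^{2(H-\beta^{\prime})}$ for $T^{2(H-\beta^{\prime})}$. The only detail worth adding is the diagonal case $i=j$, where one uses $\omega^{2,i,i}_{s,t}=\tfrac12(\omega^{1,i}_{s,t})^{2}$ for a geometric rough path rather than the off-diagonal integral bound.
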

\begin{proof}
The proof of this lemma is similar to Brownian motion case\cite[Theorem 4.5]{MR4266219}, using Lemma \ref{lemma 3.1} and Lemma \ref{lemma 3.2} to complete the proof.
%For a fractional Brownian rough path
%$\boldsymbol{\omega}=(\omega^{1},\omega^{2})$, $1\leq i,j\leq d$ and
%$0\leq s<t\leq T$,  we have
%\begin{equation}\nonumber
%\begin{aligned}
%\mathbb{E}|\omega^{1,i}_{t}-\omega^{1,i}_{s}|^{2}=|t-s|^{2H}\leq
%C(T,H,\beta^{\prime})|t-s|^{2\beta^{\prime}},\quad \beta^{\prime}<H.
%\end{aligned}
%\end{equation}
%Using Gaussian property of Gaussian process or hypercontractivity of the
%first order chaos, we have
%\begin{equation}\label{4.1}
%\left|\omega_{t}^{1,i}-\omega_{s}^{1,i}\right|_{L^{q}}\leq
%C(q,T,\beta^{\prime})(t-s)^{\beta^{\prime}}.
%\end{equation}
%For $i\neq j$, by Lemma  \ref{lemma 3.1} and Lemma \ref{lemma 3.2}, we
%obatin
%\begin{equation}\nonumber
%\begin{aligned}
%|\omega^{2,i,j}_{s,t}|^{2}_{L^{2}}=\mathbb{E}\left|\int_{s}^{t}\omega^{1,i}_{s,r}d\omega^{1,j}_{r}\right|^{2}\leq&
%C\|R_{\omega^{1,i}}\|_{\rho;[s,t]^{2}}\|R_{\omega^{1,j}}\|_{\rho;[s,t]^{2}}\\
%\leq &C|t-s|^{4H},
%\end{aligned}
%\end{equation}
%where we use the fact $\sigma^{2}_{\omega^{1,i}}(u)=u^{2H}, i\in\{1,2,\cdots,d\}$. Hence, using hypercontractivity of the second order chaos, we have
%\begin{equation}\label{4.2}
%|\omega^{2,i,j}_{s,t}|_{L^{\frac{q}{2}}}\leq
%C(q)|\omega^{2,i,j}_{s,t}|_{L^{2}}\leq C(q)|t-s|^{2H}\leq
%C(q,T,\beta^{\prime},H)|t-s|^{2\beta^{\prime}}.
%\end{equation}
%For $i=j$, we can use the property of the geometric rough path
%$\omega^{2,i,i}_{s,t}=\frac{1}{2}(\omega^{1,i}_{s,t})^{2}$, then
%\begin{equation}\label{4.3}
%  |\omega^{2,i,i}_{s,t}|_{L^{\frac{q}{2}}}\leq
%C(q)|\omega^{2,i,i}_{s,t}|_{L^{2}}\leq
%C(q,H,T,\beta^{\prime})|t-s|^{2\beta^{\prime}}.
%\end{equation}
\end{proof}
 Lemma \ref{theorem 4.2} shows that the geometric
rough  $\boldsymbol{\omega}=(\omega^{1},\omega^{2})$ satisfies the condition
which we have formulated in Lemma \ref{theorem 4.1} for
$\rho=\frac{1}{2\beta^{\prime}}$. Next, we need to check  that the
approximative process satisfies the same conditions.% In particular, the
%constant $C$  in Lemma \ref{theorem 4.3},\ref{theorem 4.4}, Theorem \ref{theorem 4.5} should be independent of
%$\delta$.
\begin{lemma}\label{theorem 4.3}
Let $\boldsymbol{\omega}:=(\omega^{1},\omega^{2}) $ be a geometric fractional Brownian rough path   with Hurst index $H\in(\frac{1}{3},\frac{1}{2}]$, its approximation $\boldsymbol{\omega}_{\delta}=(W_{\delta}(\cdot,\omega^{1}),\mathbb{W}_{\delta}(\omega^{1}))$ forms a smooth rough path and satisfies the following estimates
\begin{equation*}
\begin{aligned}
&|W^{i}_{\delta}(t,\omega^{1})-W^{i}_{\delta}(s,\omega^{1})|_{L^{q^{\prime}}}\leq C(T,q^{\prime},H,\beta^{\prime})(t-s)^{\beta^{\prime}},\\
&|\mathbb{W}^{i,j}_{\delta}(\omega^{1})_{s,t}|_{L^{\frac{q^{\prime}}{2}}}\leq C(T,q^{\prime},H,\beta^{\prime})(t-s)^{2\beta^{\prime}},
\end{aligned}
\end{equation*}
where $\frac{1}{3}<\beta^{\prime}<H$, $q^{\prime}\geq 2$, $1\leq i,j\leq d$ and $0\leq s<t\leq T$.
\end{lemma}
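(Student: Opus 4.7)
The plan is to mirror the strategy used for Lemma \ref{theorem 4.2}: since $W_{\delta}(\cdot,\omega^{1})$ is a bounded linear functional of the underlying fractional Brownian motion and is therefore Gaussian, and since $\mathbb{W}^{i,j}_{\delta}$ lives in the second (non-homogeneous) Wiener chaos of fBm, by the hypercontractivity recorded in the remark after Lemma \ref{theorem 4.1} it suffices to establish the two estimates in $L^{2}$ with a constant uniform in $\delta\in(0,1)$. The $L^{q^{\prime}}$ and $L^{q^{\prime}/2}$ bounds then follow automatically from the equivalence of moments on Wiener chaos.

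For the path-level estimate I first use Fubini to rewrite the defining integral in the averaged form
\begin{equation*}
W^{i}_{\delta}(t,\omega^{1})-W^{i}_{\delta}(s,\omega^{1})=\frac{1}{\delta}\int_{0}^{\delta}\bigl(\omega^{1,i}(t+u)-\omega^{1,i}(s+u)\bigr)du.
\end{equation*}
Minkowski's inequality in $L^{2}(\mathbb{P})$ together with the stationarity of fBm increments gives $\|W^{i}_{\delta}(t)-W^{i}_{\delta}(s)\|_{L^{2}}\leq C_{H}|t-s|^{H}$, with a constant independent of $\delta$ (the factor $1/\delta$ is exactly absorbed by the length of the $u$-interval). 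Because $\beta^{\prime}<H$ and $|t-s|\leq T$, this is further dominated by $C(T,H,\beta^{\prime})|t-s|^{\beta^{\prime}}$, and hypercontractivity (Lemma \ref{lemma 3.1} in the Appendix) upgrades the bound to $L^{q^{\prime}}$.

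For the L\'{e}vy area I exploit the smoothness of $W_{\delta}$ to write $\mathbb{W}^{i,j}_{\delta}(\omega^{1})_{s,t}=\int_{s}^{t}(W^{i}_{\delta}(r,\omega^{1})-W^{i}_{\delta}(s,\omega^{1}))\mathcal{G}_{\delta}(\theta_{r}\omega^{1,j})\,dr$ and split into symmetric and antisymmetric components. Integration by parts yields the geometric identity
\begin{equation*}
\mathbb{W}^{i,j}_{\delta}(\omega^{1})_{s,t}+\mathbb{W}^{j,i}_{\delta}(\omega^{1})_{s,t}=\bigl(W^{i}_{\delta}(t)-W^{i}_{\delta}(s)\bigr)\bigl(W^{j}_{\delta}(t)-W^{j}_{\delta}(s)\bigr),
\end{equation*}
whose $L^{q^{\prime}/2}$-norm is controlled by Cauchy--Schwarz together with the path-level estimate; in particular the diagonal case $i=j$ (giving $\frac{1}{2}(W^{i}_{\delta}(t)-W^{i}_{\delta}(s))^{2}$) is immediate. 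The antisymmetric part, relevant only for $i\neq j$, is treated by the independence of the components $\omega^{1,i}$ and $\omega^{1,j}$: the second moment factorises as a double integral of the covariance of $W^{i}_{\delta}$ against the covariance of the smooth ``derivative'' $\mathcal{G}_{\delta}(\theta_{\cdot}\omega^{1,j})$, and this is exactly the setting of the 2D Young-type Gaussian-rough-path bound encapsulated in Lemma \ref{lemma 3.2} of the Appendix, following the template used for the fBm itself in Lemma \ref{theorem 4.2}.

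The main obstacle, as I see it, is the uniform-in-$\delta$ control of the rectangular increments of the covariance $R_{W_{\delta}}$: one must show that $\|R_{W_{\delta}}\|_{\rho\text{-var};[-T,T]^{2}}$ is finite with $\rho=1/(2H)$ and bounded independently of $\delta$, despite the $1/\delta^{2}$ prefactor that appears when $R_{W_{\delta}}$ is written out in terms of the fBm covariance. I would absorb this prefactor by applying the same averaging device used in the path-level step: exchanging the expectation with the two inner $u$-integrals reduces each rectangle of $R_{W_{\delta}}$ to an average of the corresponding rectangles of $R_{B^{H}}$, whose $\rho$-variation on $[0,T]^{2}$ is known by classical fBm estimates. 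Inserting this uniform covariance bound into Lemma \ref{lemma 3.2} and applying hypercontractivity yields $\|\mathbb{W}^{i,j}_{\delta}(\omega^{1})_{s,t}\|_{L^{q^{\prime}/2}}\leq C|t-s|^{2H}\leq C(T,q^{\prime},H,\beta^{\prime})|t-s|^{2\beta^{\prime}}$, completing the proof.
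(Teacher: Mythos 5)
Your proposal follows essentially the paper's route: reduce everything to $L^{2}$ via hypercontractivity, establish a uniform-in-$\delta$ bound on the $\rho$-variation ($\rho=\frac{1}{2H}$) of $R_{W_{\delta}}$ by averaging rectangular increments of the fBm covariance, feed that bound into the two-dimensional Young (Towghi) maximal inequality for the off-diagonal L\'evy area, and use the geometric identity $\mathbb{W}^{i,i}_{\delta}(\omega^{1})_{s,t}=\frac{1}{2}\bigl(W^{i}_{\delta}(t,\omega^{1})-W^{i}_{\delta}(s,\omega^{1})\bigr)^{2}$ on the diagonal. Two remarks. First, your Fubini rewriting $W^{i}_{\delta}(t,\omega^{1})-W^{i}_{\delta}(s,\omega^{1})=\frac{1}{\delta}\int_{0}^{\delta}\bigl(\omega^{1,i}(t+u)-\omega^{1,i}(s+u)\bigr)du$ followed by Minkowski is a genuine simplification of the path-level step: it gives $|t-s|^{H}$ in one line, where the paper splits into $\delta\geq t-s$ and $\delta<t-s$ and runs a H\"older/Riemann-sum argument against $\|R_{\omega^{1,i}}\|_{\frac{1}{2H}-var}$; the same Jensen-type averaging also yields the covariance-variation bound, except that the shifted rectangles sit in $[s,t+\delta]^{2}$, so the regime $\delta>t-s$ still requires the separate (elementary) direct estimate, exactly as in the paper's \eqref{4.4} and \eqref{4.11*}. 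Second, and this you should fix: your appendix citations are swapped. The inequality you actually need, $\mathbb{E}\bigl|\int W^{i}_{\delta}\,dW^{j}_{\delta}\bigr|^{2}\leq C\|R_{W^{i}_{\delta}}\|_{\rho-var}\|R_{W^{j}_{\delta}}\|_{\rho-var}$, is Lemma \ref{lemma 3.1}, not Lemma \ref{lemma 3.2}; the latter is the concavity criterion for $\sigma^{2}_{X}$, which takes concavity as \emph{input} and produces the covariance bound as \emph{output}, and which the paper explicitly remarks cannot be applied to $W_{\delta}$ precisely because that concavity is intractable --- this is why the variation bound is computed by hand. Read literally, ``inserting the uniform covariance bound into Lemma \ref{lemma 3.2}'' is not a valid step; read as intended (Lemma \ref{lemma 3.1}), your argument coincides with the paper's. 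The symmetric/antisymmetric split of $\mathbb{W}^{i,j}_{\delta}$ is harmless but redundant, since controlling the antisymmetric part via the 2D Young bound amounts to bounding $\mathbb{W}^{i,j}_{\delta}$ and $\mathbb{W}^{j,i}_{\delta}$ individually anyway.
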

\begin{proof}
For each $\delta\in(0,1]$, $\mathbb{E}\exp(iuW^{j}_{\delta}(t,\omega^{1}))=\exp(-\frac{1}{2}u^{2}\frac{t^{2}}{\delta^{2-2H}})$, and since every linear combination of $(W^{j}_{\delta}(t_{1},\omega^{1}),\cdots,W^{j}_{\delta}(t_{k},\omega^{1})),k\in\mathbb{Z}^{+}$ has a univariate Gaussian distribution, then $(W^{j}_{\delta}(t_{1},\omega^{1}),\cdots,W^{j}_{\delta}(t_{k},\omega^{1})),k\in\mathbb{Z}^{+}$ is a multivariate Gaussian random variable.  Thus $W_{\delta}(t,\omega^{1})$ is a Gaussian process. Moreover, for any $s,t\in\mathbb{R}$, we have
  \begin{equation*}
  	\begin{aligned}
 W^{i}_{\delta}(t+s,\omega^{1})- W^{i}_{\delta}(s,\omega^{1})&=\frac{1}{\delta}\int_{0}^{t+s}\omega^{1,i}_{r+\delta}-\omega^{1,i}_{r}dr-\frac{1}{\delta}\int_{0}^{s}\omega^{1,i}_{r+\delta}-\omega^{1,i}_{r}dr\nonumber\\
 &=\frac{1}{\delta}\int_{s}^{t+s}\omega^{1,i}_{r+\delta}-\omega^{1,i}_{r}dr\nonumber\\
 &=\frac{1}{\delta}\int_{0}^{t}\omega^{1,i}_{r+\delta+s}-\omega^{1,i}_{r+s}dr\nonumber\\
 &=\frac{1}{\delta}\int_{0}^{t}\theta_{s}\omega^{1,i}_{r+\delta}-\theta_{s}\omega^{1,i}_{r}dr=W^{i}_{\delta}(t,\theta_{s}\omega^{1})\nonumber.
 \end{aligned}
 \end{equation*}
 This equation implies that the increment $W^{i}_{\delta}(t+s,\omega^{1})- W^{i}_{\delta}(s,\omega^{1})$ has the same distribution as $W^{i}_{\delta}(t,\omega^{1})$, since the $\theta_{t}$-invariance for $\mathbb{P}_{H}$, namely, $\theta_{s}\omega^{1}$ has the same distribution as $\omega^{1}$. Hence $W^{i}_{\delta}(t,\omega^{1})$ is a Gaussian process with stationary increments.
 It is obvious that $\boldsymbol{\omega}_{\delta}=(W_{\delta}(\cdot,\omega^{1}),\mathbb{W}_{\delta}(\omega^{1}))$ forms a smooth rough path and it means that $\boldsymbol{\omega}_{\delta}$ is also a geometric rough path. It remains to check the above estimates.
For the first component, we set $W^{i}_{\delta}(\cdot,\omega^{1}):=W_{\delta}(\cdot,\omega^{1,i})$, if $\delta\geq t-s>0$, then by  H\"{o}lder's inequality we have
\begin{equation}\label{4.4}
\begin{aligned}
\mathbb{E}|W^{i}_{\delta}(t,\omega^{1})-W^{i}_{\delta}(s,\omega^{1})|^{2}&=\mathbb{E}\left|\int_{s}^{t}\frac{1}{\delta}(\omega^{1,i}_{s+\delta}-\omega^{1,i}_{s})ds\right|^{2}\\
&\leq\frac{t-s}{\delta^{2}}\int_{s}^{t}\mathbb{E}|\omega^{1,i}(r+\delta)-\omega^{1,i}(r)|^{2}ds\\
&\leq C(t-s)^{2}\frac{1}{\delta^{2-2H}}\leq C(t-s)^{2H}\\
&\leq C(t-s)^{2\beta^{\prime}}.
\end{aligned}
\end{equation}
Moreover, for $0<\delta<t-s$, using  H\"{o}lder's inequality, we have
\begin{equation}\label{4.5}
\begin{aligned}
&\mathbb{E}|W^{i}_{\delta}(t,\omega^{1})-W^{i}_{\delta}(s,\omega^{1})|^{2}=\frac{1}{\delta^{2}}\mathbb{E}\int_{s}^{t}\int_{s}^{t}(\omega^{1,i}_{\delta+s_{1}}-\omega^{1,i}_{s_{1}})(\omega^{1,i}_{\delta+s_{2}}-\omega^{1,i}_{s_{2}})ds_{1}ds_{2}\\
&~~=\frac{1}{\delta^{2}}\int_{s}^{t}\int_{s}^{t}R_{\omega^{1,i}}\left(\begin{array}{cc}
s_{1} & s_{1}+\delta \\
s_{2} & s_{2}+\delta
\end{array}\right)ds_{1}ds_{2}\\
&~~\leq \frac{1}{\delta^{2}}\left(\int_{s}^{t}\int_{s}^{t}\left|R_{\omega^{1,i}}{}\left(\begin{array}{cc}
s_{1} & s_{1}+\delta \\
s_{2} & s_{2}+\delta
\end{array}\right)\right|^{\frac{1}{2H}}ds_{1}ds_{2}\right)^{2H}(t-s)^{2(1-2H)}.
\end{aligned}
\end{equation}
For fixed $\delta<t-s$, let $|\mathcal{P}(s,t)|<\delta^{\frac{1}{2H}}$ and $|\mathcal{P}^{\prime}(s,t)|<\delta^{\frac{1}{2H}}$, then by the definition of a Riemann integral and the $\frac{1}{2H}$-variation of  the rectangular increment of the covariance  of a fractional Brownian motion. We have
\begin{equation}\label{4.6}
\begin{aligned}
&\frac{1}{\delta^{2}}\left(\int_{s}^{t}\int_{s}^{t}\left|R_{\omega^{1,i}}{}\left(\begin{array}{cc}
s_{1} & s_{1}+\delta \\
s_{2} & s_{2}+\delta
\end{array}\right)\right|^{\frac{1}{2H}}ds_{1}ds_{2}\right)^{2H}\\
&=\!\left(\!\lim_{|\mathcal{P}(\!s,t\!)|\!\vee\!|\mathcal{P}^{\prime}(s,t)|\rightarrow\! 0}\!\sum_{\substack{[\!t_{i},t_{i+1}\!]\in\mathcal{P}(\!s,t\!)\\ [\!t^{\prime}_{j},t_{j+1}\!]\in\mathcal{P}^{\prime}(s,t) }}\!\left|\!R_{\omega^{1,i}}{}\!\!\left(\begin{array}{cc}
\!t_{i} & t_{i}\!+\!\delta \\
\!t^{\prime}_{j} & t^{\prime}_{j}\!+\!\delta
\end{array}\!\right)\!\right|^{\frac{1}{2H}}\!\!\!\!\frac{t_{i+1}\!-\!t_{i}}{\delta^{\frac{1}{2H}}}\!\frac{t^{\prime}_{j+1}\!-\!t^\prime_{j}}{\delta^{\frac{1}{2H}}}\!\right)^{2H}\\
&\leq \left(\sup_{\substack{\mathcal{P}(s,t)\\ \mathcal{P}^{\prime}(s,t)}}\sum_{\substack{[t_{i},t_{i+1}]\in\mathcal{P}(s,t)\\ [t^{\prime}_{j},t_{j+1}]\in\mathcal{P}^{\prime}(s,t) }}\left|R_{\omega^{1,i}}{}\left(\begin{array}{cc}
t_{i} & t_{i}+\delta \\
t^{\prime}_{j} & t^{\prime}_{j}+\delta
\end{array}\right)\right|^{\frac{1}{2H}}\!\right)^{2H}\\
&\leq \|R_{\omega^{1,i}}\|_{\frac{1}{2H}-var;[s,t+\delta]^{2}}\leq C(t+\delta-s)^{2H}\leq C(H)(t-s)^{2H},
\end{aligned}
\end{equation}
where $\delta<t-s$, it follows that the constant $C(H)$ does not depend on $\delta$. Put \eqref{4.6} into \eqref{4.5}, we have
 \begin{equation}\label{4.7}
 \mathbb{E}|W^{i}_{\delta}(t,\omega^{1})-W^{i}_{\delta}(s,\omega^{1})|^{2}\leq  C(T,H,\beta^{\prime})(t-s)^{2\beta^{\prime}}.
 \end{equation}
Together \eqref{4.4}, \eqref{4.7} and applying the hypercontractivity of first order chaos, we obtain
\begin{equation}\label{4.8}
|W^{i}_{\delta}(t,\omega^{1})-W^{i}_{\delta}(s,\omega^{1})|_{L^{q^{\prime}}}\leq C(T,q^{\prime},H,\beta^{\prime})(t-s)^{\beta^{\prime}}.\end{equation}
 For the second component $\mathbb{W}_{\delta}(\omega^{1})$, we first compute $\|R_{W^{m}_{\delta}(\cdot,\omega^{1})}\|^{\frac{1}{2H}}_{\frac{1}{2H};[s,t]^{2}}$ for $1\leq m\leq d$. For $0<\delta< t- s$ and any partition $\mathcal{P}(s,t),\mathcal{P}^{\prime}(s,t)$, similar to the above calculation we have that
 \begin{equation}\label{4.9}
 \begin{aligned}
&~~\sum_{\substack{[t_{i},t_{i+1}]\in\mathcal{P}\\ [t^{\prime}_{j},t^{\prime}_{j+1}]\in\mathcal{P}^{\prime}}}|
\mathbb{E}[W^{m}_{\delta}(\cdot,\omega^{1})_{t_{i},t_{i+1}}W^{m}_{\delta}(\cdot,\omega^{1})_{t^{\prime}_{j},t^{\prime}_{j+1}}]|^{\frac{1}{2H}}\\
&~~=\sum_{\substack{[t_{i},t_{i+1}]\in\mathcal{P}\\ [t^{\prime}_{j},t^{\prime}_{j+1}]\in\mathcal{P}^{\prime}}}
\left|\mathbb{E}\int_{t_{i}}^{t_{i+1}}\int_{t_{j}^{\prime}}^{t^{\prime}_{j+1}}\frac{1}{\delta^{2}}\theta_{s_{1}}\omega^{1,m}(\delta)\theta_{s_{2}}\omega^{1,m}(\delta)ds_{1}ds_{2}\right|^{\frac{1}{2H}}\\
&~~=\sum_{\substack{[t_{i},t_{i+1}]\in\mathcal{P}\\ [t^{\prime}_{j},t^{\prime}_{j+1}]\in\mathcal{P}^{\prime}}}
\left(\int_{t_{i}}^{t_{i+1}}\int_{t^{\prime}_{j}}^{t^{\prime}_{j+1}}\frac{1}{\delta^{2}}\left|R_{\omega^{1,m}}{}\left(\begin{array}{cc}
s_{1} & s_{1}+\delta \\
s_{2} & s_{2}+\delta
\end{array}\right)\right|ds_{1}ds_{2}\right)^{\frac{1}{2H}}\\
&~~\leq \left(\int_{s}^{t}\int_{s}^{t}\frac{1}{\delta^{2}}\left|R_{\omega^{1,m}}{}\left(\begin{array}{cc}
s_{1} & s_{1}+\delta \\
s_{2} & s_{2}+\delta
\end{array}\right)\right|ds_{1}ds_{2}\right)^{\frac{1}{2H}}\\
&~~\leq\frac{1}{\delta^{\frac{1}{H}}}\left(\int_{s}^{t}\int_{s}^{t}\left|R_{\omega^{1,m}}{}\left(\begin{array}{cc}
s_{1} & s_{1}+\delta \\
s_{2} & s_{2}+\delta
\end{array}\right)\right|^{\frac{1}{2H}}ds_{1}ds_{2}\right)(t-s)^{2(\frac{1}{2H}-1)},
 \end{aligned}
\end{equation}
 where the last inequality holds by H\"{o}lder's inequality.  Choosing $|\mathcal{P}(s,t)|\bigvee|\mathcal{P}^{\prime}(s,t)|\leq \delta^{\frac{1}{2H}}$, then we have
 \begin{equation}\label{4.10}
 \begin{aligned}
& \frac{1}{\delta^{\frac{1}{H}}}\left(\int_{s}^{t}\int_{s}^{t}\left|R_{\omega^{1,m}}{}\left(\begin{array}{cc}
s_{1} & s_{2}+\delta \\
s_{2} & s_{2}+\delta
\end{array}\right)\right|^{\frac{1}{2H}}ds_{1}ds_{2}\right)\\
&=\lim_{|\substack{\mathcal{P}(s,t)|\rightarrow 0\\ |\mathcal{P}^{\prime}(s,t)|\rightarrow 0}}\frac{1}{\delta^{\frac{1}{H}}}\!\sum_{\substack{[t_{i},t_{i+1}]\in\mathcal{P}(s,t)\\ [t^{\prime}_{j},t^{\prime}_{j+1}]\in\mathcal{P}^{\prime}(s,t)}}\!\left|R_{\omega^{1,m}}{}\left(\begin{array}{cc}
t_{i} & t_{i}+\delta \\
t^{\prime}_{j} & t^{\prime}_{j}+\delta
\end{array}\right)\!\right|^{\frac{1}{2H}}\!(t_{i+1}\!-\!t_{i})(t^{\prime}_{j+1}\!-\!t^{\prime}_{j})\\
&\leq \sup_{\substack{\mathcal{P}(s,t)\\ \mathcal{P}^{\prime}(s,t)}} \sum_{\substack{[t_{i},t_{i+1}]\in\mathcal{P}(s,t)\\ [t^{\prime}_{j},t^{\prime}_{j+1}]\in\mathcal{P}^{\prime}(s,t)}}\left|R_{\omega^{1,m}}{}\left(\begin{array}{cc}
t_{i} & t_{i}+\delta \\
t^{\prime}_{j} & t^{\prime}_{j}+\delta
\end{array}\right)\right|^{\frac{1}{2H}}\\
&\leq \|R_{\omega^{1,m}}\|^{\frac{1}{2H}}_{\frac{1}{2H}-var;[s,t+\delta]^{2}}\leq C(H)(t-s).
 \end{aligned}
 \end{equation}
 Putting \eqref{4.10} into \eqref{4.9} we have that
\begin{equation}\label{4.11}
\|R_{W^{m}_{\delta}(\cdot,\omega^{1})}\|^{\frac{1}{2H}}_{\frac{1}{2H}-var;[s,t]^{2}}\leq C(T,H)(t-s).\end{equation}
For $\delta>t-s>0$, similar to \eqref{4.4}, we have
 \begin{equation}\label{4.11*}
\|R_{W^{m}_{\delta}(\cdot,\omega^{1})}\|^{\frac{1}{2H}}_{\frac{1}{2H}-var;[s,t]^{2}}\leq C(T,H)(t-s).
 \end{equation}
Using the elementary inequality $ (\sum|a_{i}|^{\frac{1}{2\beta^{\prime}}})^{2\beta^{\prime}}\leq  (\sum|a_{i}|^{\frac{1}{2H}})^{2H}$(\cite[Proposition 5.3]{MR2604669}), we obtain
\begin{equation}\label{4.12}\|R_{W^{m}_{\delta}(\cdot,\omega^{1})}\|^{\frac{1}{2\beta^{\prime}}}_{\frac{1}{2\beta^{\prime}}-var;[s,t]^{2}}\leq\|R_{W^{m}_{\delta}(\cdot,\omega^{1})}\|^{\frac{1}{2H}}_{\frac{1}{2H}-var;[s,t]^{2}}\leq C(T,H)(t-s).\end{equation}
 For the case $i\neq j$, using hypercontractivty, Lemma \ref{lemma 3.1} and \eqref{4.12} we derive
 \begin{equation}\label{4.13}
 \begin{aligned}
 |\mathbb{W}^{i,j}_{\delta}(\omega^{1})_{s,t}|_{L^{\frac{q^{\prime}}{2}}}&\leq C(q^{\prime}) |\mathbb{W}^{i,j}_{\delta}(\omega^{1})_{s,t}|_{L^{2}}\\
 &\leq C(q^{\prime},T,H)\sqrt{\|R_{W^{i}_{\delta}(\cdot,\omega^{1})}\|_{\frac{1}{2\beta^{\prime}};[s,t]^{2}}\|R_{W^{j}_{\delta}(\cdot,\omega^{1})}\|_{\frac{1}{2\beta^{\prime}};[s,t]^{2}}}\\
 &\leq C(q^{\prime},T,H,\beta^{\prime})(t-s)^{2\beta^{\prime}}.
 \end{aligned}
 \end{equation}
 For the case $i=j$, using the property of a geometric rough path and \eqref{4.7}, we obtain
 \begin{equation}\label{4.14}
 \begin{aligned}
 \mathbb{E}(\mathbb{W}^{i,i}_{\delta}(\omega^{1}))_{t,s})^{2}\leq \frac{1}{4}\mathbb{E}((W^{i}_{\delta}(\cdot,\omega^{1}))_{s,t})^{2}\leq C(T,H,\beta^{\prime})(t-s)^{2\beta^{\prime}}.
 \end{aligned}
 \end{equation}
 By hypercontractivity, we have
 \begin{equation}\label{4.15}
 |\mathbb{W}^{i,i}_{\delta}(\omega^{1})_{s,t}|_{L^{\frac{q^{\prime}}{2}}}\leq C(T,q^{\prime},H,\beta^{\prime})(t-s)^{2\beta^{\prime}}.
 \end{equation}
\end{proof}
Furthermore, we consider the difference process $X_{\delta}$  between a fractional Brownian motion and its approximative process. We can establish the following theorem.
\begin{lemma}\label{theorem 4.4}
Let $\boldsymbol{\omega}:=(\omega^{1},\omega^{2}) $ be a fractional Brownian rough path with Hurst index $H\in(\frac{1}{3},\frac{1}{2}]$.  Consider  the approximation $\boldsymbol{\omega}_{\delta}=(W_{\delta}(\cdot,\omega^{1}),\mathbb{W}_{\delta}(\omega^{1}))$ and define $X_{\delta}(t):=\omega^{1}_{t}-W_{\delta}(t,\omega^{1})$. Then  $X_{\delta}(t)$ generates a geometric rough path $(X_{\delta},\mathbb{X}_{\delta})$ and we have the following estimates
\begin{equation*}
\begin{aligned}
&|X^{i}_{\delta}(t)-X^{i}_{\delta}(s)|_{L^{q^{\prime}}}\leq C\delta^{H-\beta^{\prime}}(t-s)^{\beta^{\prime}},\\
&|\mathbb{X}^{i,j}_{\delta,s,t}|_{L^{\frac{q^{\prime}}{2}}}\leq C(H,q^{\prime},\beta^{\prime})\delta^{2H-2\beta^{\prime}}(t-s)^{2\beta^{\prime}},
\end{aligned}
\end{equation*}
where $\frac{1}{3}<\beta^{\prime}<H$ and $q^{\prime}\geq 2$, $1\leq i,j\leq d$, $0\leq s<t\leq T$ .
\end{lemma}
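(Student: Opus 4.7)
The plan is to model the proof on that of Lemma \ref{theorem 4.3}, splitting the analysis of the increment estimate for $X_\delta^i$ into the two regimes $\delta\ge t-s$ and $\delta<t-s$, and deducing the second–order bound from the Gaussian rough integral estimate (Lemma \ref{theorem 4.1}) together with the geometric property. First I would observe that $X_\delta^i$ is a centered Gaussian process with stationary increments (as a linear combination of two such processes), and that since both $\boldsymbol{\omega}$ and $\boldsymbol{\omega}_\delta$ are geometric, $(X_\delta,\mathbb{X}_\delta)$ is a geometric rough path. By the hypercontractivity of first order Wiener chaos (as in Remark 4.1), it suffices to establish the bounds in $L^2$ and $L^1$, respectively.

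For the first-component estimate in the regime $\delta\ge t-s$, I would use the triangle inequality together with Lemma \ref{theorem 4.2} and the variance bound \eqref{4.4} already obtained in the proof of Lemma \ref{theorem 4.3}, which give $|X_\delta^i(t)-X_\delta^i(s)|_{L^2}\le C(t-s)^H$; writing $(t-s)^H=(t-s)^{H-\beta'}(t-s)^{\beta'}\le\delta^{H-\beta'}(t-s)^{\beta'}$ (since $H-\beta'>0$ and $t-s\le\delta$) yields the claim. For the regime $\delta<t-s$, I would use the identity
\begin{equation*}
X_\delta^i(t)-X_\delta^i(s)=\frac{1}{\delta}\int_0^\delta\bigl(\omega^{1,i}_{s,t}-\omega^{1,i}_{s+u,t+u}\bigr)\,du,
\end{equation*}
then expand $\mathbb{E}|X_\delta^i(t)-X_\delta^i(s)|^2$ as a double integral of four rectangular increments of $R_{\omega^{1,i}}$; after using stationarity to collapse the diagonal terms and the $\frac{1}{2H}$-variation bound of $R_{\omega^{1,i}}$ over $[s,t+\delta]^2$ (Lemma \ref{lemma 3.2}), the dominant contribution comes from Taylor–expanding the covariance $|t-s|^{2H}$ around the gap $t-s\gg u_1,u_2$, which exhibits the order $\delta^{2}(t-s)^{2H-2}$ cancellation between $\omega^1$ and its averaged surrogate $W_\delta$; interpolating with the trivial bound $\delta^{2H}$ (i.e.\ lifting the exponent from $H$ to $\beta'$) then yields $\delta^{2(H-\beta')}(t-s)^{2\beta'}$.

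For the second-component estimate, for $i\ne j$ I would use the independence of the components $\omega^{1,i},\omega^{1,j}$ together with the Gaussian rough integral estimate (Lemma \ref{theorem 4.1} / Lemma \ref{lemma 3.1}) to obtain
\begin{equation*}
|\mathbb{X}^{i,j}_{\delta,s,t}|_{L^2}\le C\,\|R_{X_\delta^i}\|_{\frac{1}{2\beta'}\text{-var};[s,t]^2}^{1/2}\|R_{X_\delta^j}\|_{\frac{1}{2\beta'}\text{-var};[s,t]^2}^{1/2},
\end{equation*}
and then estimate $\|R_{X_\delta^k}\|_{\frac{1}{2\beta'}\text{-var};[s,t]^2}\le C\delta^{2(H-\beta')}(t-s)^{2\beta'}$ by repeating the computation above on general rectangular increments, using the triangle inequality $R_{X_\delta}=R_{\omega^1}+R_{W_\delta}-R_{\omega^1,W_\delta}-R_{W_\delta,\omega^1}$ together with the bounds \eqref{4.11}--\eqref{4.12}. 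For $i=j$ the geometric property gives $\mathbb{X}^{i,i}_{\delta,s,t}=\tfrac{1}{2}(X_\delta^i(t)-X_\delta^i(s))^2$, and the bound follows by squaring the first-component estimate. Hypercontractivity upgrades $L^2$, $L^1$ to $L^{q'}$, $L^{q'/2}$. The main obstacle is the sharp $\delta^{H-\beta'}$ decay in the regime $\delta<t-s$: a direct triangle inequality loses this factor entirely, so the structural cancellation between $\omega^1$ and $W_\delta$ must be exploited via the averaging identity and the Taylor expansion of the fractional covariance at large separation.
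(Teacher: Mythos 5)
Your proposal is essentially correct and follows the same architecture as the paper: a two-regime estimate of $\sigma^2_{X_\delta^i}(t-s)$, then Cauchy--Schwarz/Gaussian-integral bounds for the second level, the geometric identity for $i=j$, and hypercontractivity to upgrade to $L^{q'}$ and $L^{q'/2}$. Two points of difference are worth noting. First, in the regime $\delta\le t-s$ the paper avoids any Taylor expansion of the covariance: it rewrites $\frac{1}{\delta}\int_s^t(\omega^{1,i}(r+\delta)-\omega^{1,i}(r))\,dr-\omega^{1,i}_{s,t}$ as the difference of the two boundary-layer averages $\frac{1}{\delta}\int_t^{t+\delta}(\omega^{1,i}(r)-\omega^{1,i}(t))\,dr$ and $\frac{1}{\delta}\int_s^{s+\delta}(\omega^{1,i}(r)-\omega^{1,i}(s))\,dr$, each of $L^2$-norm $O(\delta^{H})$, and then uses $\delta\le t-s$ to convert $\delta^{2H}$ into $\delta^{2H-2\beta'}(t-s)^{2\beta'}$. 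Your covariance-expansion route reaches the same bound, but your identification of $\delta^2(t-s)^{2H-2}$ as the dominant contribution is inaccurate for $H<\frac12$ (the corner terms of order $u^{2H}$ dominate); since you also invoke the trivial $\delta^{2H}$ bound, the conclusion survives, but the paper's argument is shorter and cleaner. Second, and more substantively: your suggested estimate of $\|R_{X_\delta^k}\|_{\frac{1}{2\beta'}\text{-var};[s,t]^2}$ via the decomposition $R_{X_\delta}=R_{\omega^1}+R_{W_\delta}-R_{\omega^1,W_\delta}-R_{W_\delta,\omega^1}$ and the bounds \eqref{4.11}--\eqref{4.12} cannot work as stated, because each of those four terms has $\rho$-variation of order $(t-s)^{2\beta'}$ with \emph{no} factor of $\delta$, so the triangle inequality destroys exactly the $\delta^{2(H-\beta')}$ decay you need. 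The paper instead bounds each rectangular increment directly by Cauchy--Schwarz, $|\mathbb{E}[X^i_{\delta,[u,v]}X^i_{\delta,[u',v']}]|\le\sigma_{X^i_\delta}(v-u)\,\sigma_{X^i_\delta}(v'-u')\le C\delta^{2H-2\beta'}(v-u)^{\beta'}(v'-u')^{\beta'}$, and then sums using Young's inequality (this is \eqref{4.19}); this is the step that preserves the smallness in $\delta$, and also what justifies (via Theorem \ref{theorem 3.1}) that $X_\delta$ lifts to a geometric rough path, rather than the informal ``difference of two geometric rough paths'' argument.
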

\begin{proof}
$X_{\delta}(t)$ is a  Gaussian process with stationary increments as $W_{\delta}(t,\omega^{1})$. We first compute $\sigma^{2}_{X^{i}_{\delta}}(t-s)=\mathbb{E}|X_{\delta}^{i}(t)-X_{\delta}^{i}(s)|^{2}$. Similar to Lemma \ref{theorem 4.3}, the computation is divided into two steps. For $0<\delta\leq t-s$, using triangle inequality, stationary increments of the fractional Brownian motion and Cauchy-Schwartz inequality, we have
\begin{equation}
\begin{aligned}
\mathbb{E}|&X_{\delta}^{i}(t)-X_{\delta}^{i}(s)|^{2}=\mathbb{E}\left|\frac{1}{\delta}\int_{s}^{t}\omega^{1,i}(r+\delta)-\omega^{1,i}(r)dr-\omega^{1,i}(t)+\omega^{1,i}(s)\right|^{2}\\
&\leq 2\mathbb{E}\left|\frac{1}{\delta}\int_{t}^{t+\delta}\omega^{1,i}(r)-\omega^{1,i}(t)dr\right|^{2}\!+\! 2\mathbb{E}\left|\frac{1}{\delta}\int_{s}^{s+\delta}\omega^{1,i}(r)-\omega^{1,i}(s)dr\right|^{2}\\
&\leq 2\frac{1}{\delta}\int_{t}^{t+\delta}\mathbb{E}|\omega^{1,i}(r)-\omega^{1,i}(t)|^{2}dr+2\frac{1}{\delta}\int_{s}^{s+\delta}\mathbb{E}|\omega^{1,i}(r)-\omega^{1,i}(s)|^{2}dr\\
&\leq 2\frac{1}{\delta}\left(\int_{t}^{t+\delta}(r-t)^{2H}dr+\int_{s}^{s+\delta}(r-s)^{2H}dr\right)\\
&\leq C\delta^{2H}\leq C\delta^{2H-2\beta^{\prime}}(t-s)^{2\beta^{\prime}}.
\end{aligned}
\end{equation}
For $\delta>t-s>0$, we have
\begin{equation}
	\begin{aligned}
\mathbb{E}|X_{\delta}^{i}(t)-X_{\delta}^{i}(s)|^{2}&=\mathbb{E}\left|\frac{1}{\delta}\int_{s}^{t}\omega^{1,i}(r+\delta)-\omega^{1,i}(r)dr-\omega^{1,i}(t)+\omega^{1,i}(s)\right|^{2}\\
&\leq \frac{2}{\delta^{2}}\mathbb{E}\left|\int_{s}^{t}\omega^{1,i}(r+\delta)-\omega^{1,i}(r)dr\right|^{2}+2|t-s|^{2H}\\
&\leq \frac{2}{\delta^{2}}(t-s)^{2}\delta^{2H}+2|t-s|^{2H}\\
&\leq 2(t-s)^{2H}(1+\frac{(t-s)^{2-2H}}{\delta^{2-2H}})\leq 4\delta^{2H-2\beta^{\prime}}(t-s)^{2\beta^{\prime}}.
\end{aligned}
\end{equation}
Hence, we have $\sigma^{2}_{X_{\delta}^{i}}(t-s)\leq C\delta^{2H-2\beta^{\prime}}(t-s)^{2\beta^{\prime}}$. Furthermore, by  hypercontractivity, we obtain
\begin{equation}\label{4.18}
|X^{i}_{\delta}(t)-X^{i}_{\delta}(s)|_{L^{q^{\prime}}}\leq C(q^{\prime})\delta^{H-\beta^{\prime}}(t-s)^{\beta^{\prime}}.\end{equation}
In order to illustrate that  $X_{\delta}$ forms a rough path, by Theorem \ref{theorem 3.1}, we need to compute $\|R_{X_{\delta}^{i}}\|_{\rho-var;[s,t]^{2}}^{\rho}$ for $\rho=\frac{1}{2\beta^{\prime}}$.  The Cauchy-Schwartz inequality and \eqref{4.18} for $q^{\prime}=2$, yield
\begin{equation}\label{4.19}
	\begin{aligned}
&\left\|R_{X^{i}_{\delta}}\right\|_{\rho-var;[s,t]^{2}}^{\rho}\!\!=\!\!\sup_{\substack{\mathcal{P}(s,t)\\ \mathcal{P}^{\prime}(s,t)}}\!\sum_{\substack{[u,\! v]\in\mathcal{P}(s,\!t)\\ [u^{\prime}\!,\! v^{\prime}]\in\mathcal{P}^{\prime}(s,\!t)}}\!\left|\mathbb{E}[(X_{\delta}^{i}(v)\!-\!X_{\delta}^{i}(u))(X_{\delta}^{i}(v^{\prime})\!-\!X_{\delta}^{i}(u^{\prime}))]\right|^{\rho}\\
&\leq \sup_{\substack{\mathcal{P}(s,t)\\ \mathcal{P}^{\prime}(s,t)}}\sum_{\substack{[u,v]\in\mathcal{P}(s,t)\\ [u^{\prime},v^{\prime}]\in\mathcal{P}^{\prime}(s,t)}} \left|\left(\mathbb{E}[X^{i}_{\delta}(v)-X^{i}_{\delta}(u)]^{2}\right)^{\frac{1}{2}}\left(\mathbb{E}[X^{i}_{\delta}(v^{\prime})-X^{i}_{\delta}(u^{\prime})]^{2}\right)^{\frac{1}{2}}\right|^{\rho}\\
&\leq C(\rho) \sup_{\substack{\mathcal{P}(s,t)\\ \mathcal{P}^{\prime}(s,t)}}\sum_{\substack{[u,v]\in\mathcal{P}(s,t)\\ [u^{\prime},v^{\prime}]\in\mathcal{P}^{\prime}(s,t)}}\left(\delta^{2H-2\beta^{\prime}}(v-u)^{\beta^{\prime}}(v^{\prime}-u^{\prime})^{\beta^{\prime}}\right)^{\rho}\\
&\leq C(\rho) \sup_{\substack{\mathcal{P}(s,t)\\ \mathcal{P}^{\prime}(s,t)}}\sum_{\substack{[u,v]\in\mathcal{P}(s,t)\\ [u^{\prime},v^{\prime}]\in\mathcal{P}^{\prime}(s,t)}}\left(\delta^{2H-2\beta^{\prime}}\frac{(v-u)^{2\beta^{\prime}}+(v^{\prime}-u^{\prime})^{2\beta^{\prime}}}{2^{\beta^{\prime}}}\right)^{\rho}\\
&\leq C(\rho)2^{\rho-1}\delta^{(2H-2\beta^{\prime})\rho}\sup_{\substack{\mathcal{P}(s,t)\\ \mathcal{P}^{\prime}(s,t)}} \sum_{\substack{[u,v]\in\mathcal{P}(s,t)\\ [u^{\prime},v^{\prime}]\in\mathcal{P}^{\prime}(s,t)}}\left(\frac{(v-u)+(v^{\prime}-u^{\prime})}{2^{\beta^{\prime}\rho}}\right)\\
&\leq C(\rho)\delta^{(2H-2\beta^{\prime})\rho}(t-s).
\end{aligned}
\end{equation}
Hence, we have $\|R_{X_{\delta}^{i}}\|_{\frac{1}{2\beta^{\prime}}-var;[s,t]^{2}}\leq C(\beta^{\prime})\delta^{2H-2\beta^{\prime}}|t-s|^{2\beta^{\prime}}$. Furthermore, on account of  Theorem \ref{theorem 3.1}, we know that
$X_{\delta}(t)$  forms a geometric rough path which satisfies the following estimates:
\begin{description}
  \item[1.] For $i\neq j$, using Lemma \ref{lemma 3.1} and hypercontractivity,
  \begin{equation}\label{4.20}
  \begin{aligned}
  \left|\mathbb{X}^{i,j}_{\delta,s,t}\right|_{\frac{q^{\prime}}{2}}\leq C(q^{\prime})\left|\mathbb{X}^{i,j}_{\delta,s,t}\right|_{2}\leq C(q^{\prime},\beta^{\prime})\delta^{2H-2\beta^{\prime}}(t-s)^{2\beta^{\prime}}.
  \end{aligned}
  \end{equation}
  \item[2.] For $i=j$, by the property of geometric rough path and hepercontractivity,
   \begin{equation}\label{4.21}
  \begin{aligned}\left|\mathbb{X}^{i,i}_{\delta,s,t}\right|_{\frac{q^{\prime}}{2}}\leq C(q^{\prime})\left|\mathbb{X}^{i,i}_{\delta,s,t}\right|_{2}= C(q^{\prime})\delta^{2H-2\beta^{\prime}}(t-s)^{2\beta^{\prime}}.
  \end{aligned}
  \end{equation}
\end{description}
\end{proof}
\begin{remark}
In the proof of  Lemma \ref{theorem 4.2}, the condition of Lemma \ref{lemma 3.2}
holds for the fractional Brownian motion. However, we can not apply Lemma \ref{lemma 3.2} to the stochastic processes  $X^{i}_{\delta}(\cdot)=\omega^{1,i}_{\cdot}-W^{i}_{\delta}(\cdot,\omega^{1})$ and $W^{i}_{\delta}(\cdot,\omega^{1})$, since the concavity of
$\sigma^{2}_{X^{i}_{\delta}}(u)$ and
$\sigma^{2}_{W^{i}_{\delta}(\cdot,\omega^{1})}(u)$ is too complex to check, it is mainly due to the complex structure
of the functions
$\sigma^{2}_{X^{i}_{\delta}}(u),\sigma^{2}_{W^{i}_{\delta}(\cdot,\omega^{1})}(u)$,
see for $H=\frac{1}{2}$ in \cite{MR4266219}.  For our
considerations concavity of $\sigma_{X^{i}_{\delta}}^{2}(u),\sigma^{2}_{W^{i}_{\delta}(\cdot,\omega^{1})}(u)$
are not necessary. We only use the properties of
$\|R_{X^{i}_{\delta}}\|_{\frac{1}{2\beta^{\prime}}-\!var;[s,t]^{2}},\|R_{W^{i}_{\delta}(\cdot,\omega^{1})}\|_{\frac{1}{2\beta^{\prime}}\!-\!var;[s,t]^{2}}$ in the proof of Lemma \ref{theorem 4.3}, \ref{theorem 4.4}.
\end{remark}
Finally, we need to complete the estimate of $\mathbf{\omega}_{s,t}^{2,i,j}\!-\!\mathbb{W}^{i,j}_{\delta}(\theta_{\cdot}\omega^{1})_{s,t},1\leq i,j\leq d$. To this end, our idea is based on that $(X_{\delta},\mathbb{X}_{\delta})$ can be regarded as a translation of a Gaussian rough path $(\omega^{1},\omega^{2})$ in direction $-W^{i}_{\delta}(\cdot,\omega^{1})$(see \cite[(11.5), page 188]{MR4174393} ), namely, the second order process $\mathbb{X}_{\delta}$ is generated by the shifted path $\omega^{1}-W^{i}_{\delta}(\cdot,\omega^{1})$.  Thus, we have the following theorem.
\begin{theorem}\label{theorem 4.5}
Let $\delta\in (0,1]$, $\beta^{\prime}\in (\frac{1}{3},H)$ and $q^{\prime}\geq 2$ such that $\beta^{\prime}-\frac{1}{q^{\prime}}>\frac{1}{3}$. Then for each $\beta\in\left(\frac{1}{3},\beta^{\prime}-\frac{1}{q^{\prime}}\right)$, we have $\interleave \boldsymbol{\omega}\interleave_{\beta}$ and $\interleave\boldsymbol{\omega}_{\delta}\interleave_{\beta}\in L^{q^{\prime}}$. Moreover, there exists a positive constant $C(q^{\prime},\beta^{\prime}, H ,T)$ such that
\begin{equation*}
\left|\rho_{\beta,s,t}(\boldsymbol{\omega}_{\delta},\boldsymbol{\omega})\right|_{L^{q^{\prime}}}\leq C(q^{\prime},\beta^{\prime},H,T)\delta^{H-\beta^{\prime}},\quad \text{for} -T\leq s<t\leq T.
\end{equation*}
Therefore,
\begin{equation*}
\lim_{\delta\rightarrow 0}\left|\rho_{\beta,s,t}(\boldsymbol{\omega}_{\delta},\boldsymbol{\omega})\right|_{L^{\frac{q^{\prime}}{2}}}=0.
\end{equation*}
\end{theorem}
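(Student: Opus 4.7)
The strategy is to apply Lemma \ref{theorem 4.1} to the pair $(\boldsymbol{\omega}_{\delta},\boldsymbol{\omega})$ with $\rho=\frac{1}{2\beta'}$ (so $\frac{1}{2\rho}=\beta'$ and $\frac{1}{\rho}=2\beta'$) and perturbation size $\epsilon=\delta^{H-\beta'}$. Lemmas \ref{theorem 4.2} and \ref{theorem 4.3} already give the uniform moment bounds required of both $\boldsymbol{\omega}$ and $\boldsymbol{\omega}_{\delta}$, so the task reduces to producing matching difference estimates
\begin{equation*}
|W_{\delta}(t,\omega^1)-\omega^1_t-W_{\delta}(s,\omega^1)+\omega^1_s|_{L^{q'}}\le C\delta^{H-\beta'}(t-s)^{\beta'},\quad |\mathbb{W}_{\delta}(\omega^1)_{s,t}-\omega^2_{s,t}|_{L^{q'/2}}\le C\delta^{H-\beta'}(t-s)^{2\beta'}.
\end{equation*}
The first-level difference is exactly $-X_{\delta,s,t}$, so Lemma \ref{theorem 4.4} delivers it for free.

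For the second-level difference I exploit the splitting $\omega^1=W_{\delta}(\cdot,\omega^1)+X_{\delta}$ at the level of iterated integrals, invoking the translation identity alluded to in the remark preceding the theorem:
\begin{equation*}
\omega^2_{s,t}-\mathbb{W}_{\delta}(\omega^1)_{s,t}=\mathbb{X}_{\delta,s,t}+\int_s^t W_{\delta,s,r}(\cdot,\omega^1)\otimes dX_{\delta,r}+\int_s^t X_{\delta,s,r}\otimes dW_{\delta}(r,\omega^1).
\end{equation*}
The pure iterated integral $\mathbb{X}_{\delta,s,t}$ is controlled by Lemma \ref{theorem 4.4}, using $\delta^{2H-2\beta'}\le\delta^{H-\beta'}$ for $\delta\in(0,1]$ and $H>\beta'$. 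Both cross terms lie in the second Wiener chaos, so Gaussian hypercontractivity reduces the $L^{q'/2}$ bound to an $L^2$ bound, and the Gaussian bilinear integral estimate (Lemma \ref{lemma 3.1}) together with the covariance variation bounds \eqref{4.12} and \eqref{4.19} then yields
\begin{equation*}
\left|\int_s^t W_{\delta,s,r}(\cdot,\omega^1)\otimes dX_{\delta,r}\right|_{L^2}\le C\sqrt{\|R_{W_{\delta}(\cdot,\omega^1)}\|_{\rho-var;[s,t]^2}\,\|R_{X_{\delta}}\|_{\rho-var;[s,t]^2}}\le C\delta^{H-\beta'}(t-s)^{2\beta'},
\end{equation*}
with a symmetric bound for the other cross term.

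With the uniform and difference moment bounds in hand, Lemma \ref{theorem 4.1} immediately supplies $\interleave\boldsymbol{\omega}\interleave_{\beta},\interleave\boldsymbol{\omega}_{\delta}\interleave_{\beta}\in L^{q'}$ together with $|\rho_{\beta,[-T,T]}(\boldsymbol{\omega}_{\delta},\boldsymbol{\omega})|_{L^{q'/2}}\le M\delta^{H-\beta'}$. To upgrade to the $L^{q'}$ bound stated in the theorem I rerun the same argument with the doubled exponent $\tilde q=2q'$: the hypothesis $\beta'-\frac{1}{q'}>\frac{1}{3}$ yields $\beta'-\frac{1}{\tilde q}>\frac{1}{3}$ a fortiori, and all of Lemmas \ref{theorem 4.2}--\ref{theorem 4.4} remain valid for every $q'\ge 2$ thanks to Gaussian hypercontractivity. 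The final convergence $|\rho_{\beta,s,t}|_{L^{q'/2}}\to 0$ as $\delta\to 0$ is then automatic from $H>\beta'$.

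The main technical obstacle is the control of the mixed rough integrals $\int W_\delta\,dX_\delta$ and $\int X_\delta\,dW_\delta$. Since $X_\delta=\omega^1-W_\delta(\cdot,\omega^1)$, the Gaussian processes $W_\delta$ and $X_\delta$ are strongly correlated and no independence argument is available; the bound must instead be extracted from the joint Gaussian structure and, decisively, from the decay $\|R_{X_\delta}\|_{\rho-var;[s,t]^2}\lesssim\delta^{2H-2\beta'}(t-s)^{2\beta'}$ from \eqref{4.19}. Taking the square root in Lemma \ref{lemma 3.1} turns this decay precisely into the target rate $\delta^{H-\beta'}$.
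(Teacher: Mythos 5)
Your overall strategy coincides with the paper's: the same translation/splitting identity for $\omega^{2}_{s,t}-\mathbb{W}_{\delta}(\omega^{1})_{s,t}$ in terms of $\mathbb{X}_{\delta}$ and two cross integrals, the same reduction of those cross terms to the bilinear Gaussian estimate of Lemma \ref{lemma 3.1} combined with the covariance bounds \eqref{4.12} and \eqref{4.19}, and the same choice $\epsilon=\delta^{H-\beta^{\prime}}$, $\rho=\frac{1}{2\beta^{\prime}}$ in Lemma \ref{theorem 4.1}. Your remark that the stated $L^{q^{\prime}}$ bound is obtained by rerunning the argument with $2q^{\prime}$ in place of $q^{\prime}$ is a sensible reading of a point the paper leaves implicit.

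There is, however, one genuine gap: the diagonal entries $i=j$. Lemma \ref{lemma 3.1} is a statement about iterated integrals of \emph{independent} Gaussian components --- the identity expressing the second moment of $\int_{\mathcal{P}}X^{i}_{s,r}dX^{j}_{r}$ as a double sum of products $R_{X^{i}}\cdot R_{X^{j}}$ is precisely the independence computation. For the off-diagonal cross terms $\int_{s}^{t}W^{i}_{\delta}(\cdot,\omega^{1})_{s,r}dX^{j}_{\delta}(r)$ with $i\neq j$ this is fine, since $W^{i}_{\delta}$ and $X^{j}_{\delta}$ are functionals of the independent components $\omega^{1,i}$ and $\omega^{1,j}$. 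But for $i=j$ both factors are built from the same component; you yourself observe that ``no independence argument is available,'' yet you then invoke Lemma \ref{lemma 3.1} anyway without supplying the replacement argument from ``the joint Gaussian structure.'' The paper avoids the issue entirely: for $i=j$ it never touches the cross integrals, but uses geometricity, $\omega^{2,i,i}_{s,t}=\frac{1}{2}(\omega^{1,i}_{s,t})^{2}$ and $\mathbb{W}^{i,i}_{\delta}(\omega^{1})_{s,t}=\frac{1}{2}(W^{i}_{\delta}(\cdot,\omega^{1})_{s,t})^{2}$, factors the difference as $-\frac{1}{2}X^{i}_{\delta,s,t}\bigl(\omega^{1,i}_{s,t}+W^{i}_{\delta}(\cdot,\omega^{1})_{s,t}\bigr)$, and closes with Cauchy--Schwarz and the fourth-moment bounds coming from Lemmas \ref{theorem 4.2}--\ref{theorem 4.4}. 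You should either add this diagonal computation or justify a bilinear estimate valid for correlated Gaussian integrands. A secondary omission: Lemmas \ref{theorem 4.2}--\ref{theorem 4.4} are stated on $[0,T]$, and the paper devotes the end of its proof to extending the rough path (via Chen's identity and the shift $\theta_{s}$, including the definition of $\omega^{2,i,j}_{s,0}$ for $s<0$) to $[-T,T]$; your proposal does not address this step.
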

\begin{proof}
For $i\neq j$ we have the following splitting on $\mathbb{X}^{i,j}_{\delta,s,t}$:
\begin{equation*}
\begin{aligned}
\mathbb{X}^{i,j}_{\delta,s,t}\!=\!\mathbf{\omega}^{2,i,j}_{s,t}\!\!-\!\!\!\int_{s}^{t}\!\!\!\left(\!W^{i}_{\delta}(\cdot,\omega^{1})\right)_{s,r}\!d\omega^{1,j}_{r}\!\!-\!\!\!\int_{s}^{t}\!\!\!\omega^{1,i}_{s,r}dW^{j}_{\delta}(r,\omega^{1})\!+\!\!\!\int_{s}^{t}\!\!\!\left(W^{i}_{\delta}(\cdot,\omega^{1})\right)_{s,r}\!dW^{j}_{\delta}(r,\omega^{1}),
\end{aligned}
\end{equation*}
where since $\omega^{1,i}\in C^{\beta},W^{i}_{\delta}(\cdot,\omega^{1}),i\in{1,2,\cdots,d}$ are $C^{1}$-smooth and $1+\beta>1$, then the last three integrals above are  Young integrals\cite{MR2604669}, the reader can find this decomposition in \cite[page 188]{MR4174393}. %Thus we have
%\begin{equation}\nonumber
%\begin{aligned}
%\mathbb{X}^{i,j}_{\delta,s,t}&= \mathbf{\omega}_{s,t}^{2,i,j}-\mathbb{W}^{i,j}_{\delta}(\omega^{1})_{s,t}-\int_{s}^{t}\left(W^{i}_{\delta}(\cdot,\omega^{1})\right)_{s,r}dX^{j}_{\delta}(r)-\int_{s}^{t}X^{i}_{\delta,s,r}dW^{j}_{\delta}(r,\omega^{1}).
%\end{aligned}
%\end{equation}
Hence, from the above computation we obtain
\begin{equation*}
\begin{aligned}
\mathbf{\omega}_{s,t}^{2,i,j}\!-\!\mathbb{W}^{i,j}_{\delta}(\theta_{\cdot}\omega^{1})_{s,t}\!=\!\mathbb{X}^{i,j}_{\delta,s,t}\!+\!\int_{s}^{t}\left(W^{i}_{\delta}(\cdot,\omega^{1})\right)_{s,r}dX^{j}_{\delta}(r)+\int_{s}^{t}X^{i}_{\delta,s,r}dW^{j}_{\delta}(r,\omega^{1}).
\end{aligned}
\end{equation*}
By Lemma \ref{lemma 3.1}, \eqref{4.12}, \eqref{4.19}, we have
\begin{equation}\label{4.25}
\begin{aligned}
\mathbb{E}\left|\int_{s}^{t}\left(W^{i}_{\delta}(\cdot,\omega^{1})\right)_{s,r}dX^{j}_{\delta}(r)\right|^{2}&\!\leq\! \left\|R_{X^{j}_{\delta}}\!\right\|_{\frac{1}{2\beta^{\prime}}-var;[s,t]^{2}}\!\left\|R_{W^{i}_{\delta}(\cdot,\omega^{1})}\!\right\|_{\frac{1}{2\beta^{\prime}}-var;[s,t]^{2}}\\
&\leq C(T,\beta^{\prime},H)\delta^{2H-2\beta^{\prime}}\left(t-s\right)^{4\beta^{\prime}}
\end{aligned}
\end{equation}
and
\begin{equation}\label{4.26}
\begin{aligned}
\mathbb{E}\left|\int_{s}^{t}X^{i}_{\delta,s,r}dW^{j}_{\delta}(r,\omega^{1})\right|^{2}\leq C(T,\beta^{\prime},H)\delta^{2H-2\beta^{\prime}}\left(t-s\right)^{4\beta^{\prime}}.
\end{aligned}
\end{equation}
Combining  \eqref{4.20}, \eqref{4.25}, \eqref{4.26}  and hypercontractivity, we have
\begin{equation}\label{4.27}
\left|\mathbf{\omega}_{s,t}^{2,i,j}\!-\!\mathbb{W}^{i,j}_{\delta}(\theta_{\cdot}\omega^{1})_{s,t}\right|_{L^{\frac{q^{\prime}}{2}}} \leq C(q^{\prime},T,H,\beta^{\prime})\delta^{H-\beta^{\prime}} (t-s)^{2\beta^{\prime}}.
\end{equation}
 Let $\epsilon=\delta^{H-\beta^{\prime}}$, $\rho=\frac{1}{2\beta^{\prime}}$ in Lemma \ref{theorem 4.1}, and the constant $C$ only depend on $\beta^{\prime}, T, H$, but not on $\delta$.
For $i=j$, using the property of a geometric rough path, we have
\begin{equation}\label{4.28}
\begin{aligned}
&\mathbb{E}\left(\omega^{2,i,i}_{s,t}-\mathbb{W}^{i,i}_{\delta}(\cdot,\omega^{1})_{s,t}\right)^{2}\!=\!\frac{1}{4}\mathbb{E}\left(\left(W^{i}_{\delta}(t,\omega^{1})\!-\!W^{i}_{\delta}(s,\omega^{1})\right)^{2}\!-\!\left(\omega^{1}_{t}\!-\!\omega^{1}_{s}\right)^{2}\right)^{2}\\
&~~~~~~=\frac{1}{4}\mathbb{E}\left(\left(W^{i}_{\delta}(t,\omega^{1})-W^{i}_{\delta}(s,\omega^{1})+\omega^{i}_{t}-\omega^{i}_{s}\right)\left(X_{\delta}^{i}(t)-X_{\delta}^{i}(s)
\right)\right)^{2}\\
&~~~~~~\leq \frac{1}{4}\left(\mathbb{E}\left(W^{i}_{\delta}(t,\omega^{1})-W^{i}_{\delta}(s,\omega^{1})+\omega^{i}_{t}-\omega^{i}_{s}\right)^{4}\mathbb{E}\left(X_{\delta}^{i}(t)-X_{\delta}^{i}(s)\right)^{4}\right)^{\frac{1}{2}}.
\end{aligned}
\end{equation}
By  Lemma  \ref{theorem 4.2} and \eqref{4.8} we can estimate
	\begin{equation}\label{4.29}
		\begin{aligned}
&\mathbb{E}\left|W^{i}_{\delta}(t,\omega^{1})\!-\!W^{i}_{\delta}(s,\omega^{1})\!+\!\omega^{i}_{t}\!-\!\omega^{i}_{s}\right|^{4}\\
&~~~~~~~\leq \mathbb{E}\left|W^{i}_{\delta}(t,\omega^{1})\!-\!W^{i}_{\delta}(s,\omega^{1})\right|^{4}\!+\!8\mathbb{E}\left|\omega^{1,i}_{t}-\omega^{1,i}_{s}\right|^{4}\\
&~~~~~~~\leq C(t-s)^{4\beta^{\prime}},
     \end{aligned}
 \end{equation}
where $C$ is uniform with respect to $\delta\in(0,1]$ and depends on $T, H, \beta^{\prime}$. Combining \eqref{4.28}-\eqref{4.29} and  \eqref{4.21} we obtain
\begin{equation}
	\begin{aligned}
\left|\mathbf{\omega}_{s,t}^{2,i,i}\!-\!\mathbb{W}^{i,i}_{\delta}(\theta_{\cdot}\omega^{1})_{s,t}\right|_{L^{2}}\!&=\!\frac{1}{2}\left(\mathbb{E}\!\left(\!\left(W^{i}_{\delta}(t,\omega^{1})\!-\!W^{i}_{\delta}(s,\omega^{1})\!\right)^{2}\!-\!\left(\omega^{1}_{t}\!-\!\omega^{1}_{s}\right)^{2}\!\right)^{2}\!\right)^{\frac{1}{2}}\\
% &~~~~~~=\frac{1}{2}\left(\mathbb{E}\left(\left(W^{i}_{\delta}(t,\omega^{1})-W^{i}_{\delta}(s,\omega^{1})+\omega^{i}_{t}-\omega^{i}_{s}\right)\left(X_{\delta}^{i}(t)-X_{\delta}^{i}(s)
% \right)\right)^{2}\right)^{\frac{1}{2}}\\
&\leq C\delta^{H-\beta^{\prime}}|t-s|^{2\beta^{\prime}}.
    \end{aligned}
\end{equation}
%Hence,
%\begin{equation}
%\left|\mathbf{\omega}_{s,t}^{2,i,i}\!-\!\mathbb{W}^{i,i}_{\delta}(\theta_{\cdot}\omega^{1})_{s,t}\right|_{L^{2}} \leq C\delta^{H-\beta^{\prime}}|t-s|^{2\beta^{\prime}}.
%\end{equation}
 Applying the hypercontractivity to second order chaos we can get the $L^{\frac{q^{\prime}}{2}}$-norm,
 \begin{equation}\label{4.32}
 \left|\mathbf{\omega}_{s,t}^{2,i,i}\!-\!\mathbb{W}^{i,i}_{\delta}(\theta_{\cdot}\omega^{1})_{s,t}\right|_{L^{\frac{q^{\prime}}{2}}} \leq C(q^{\prime},T,H,\beta^{\prime})\delta^{H-\beta^{\prime}} (t-s)^{2\beta^{\prime}}.
 \end{equation}
 By  \eqref{4.27}, \eqref{4.32}, Lemma \ref{theorem 4.2}, Lemma \ref{theorem 4.3}, then Lemma \ref{theorem 4.1} can be applied, for any $\beta\in(\frac{1}{3},\frac{1}{2})$. We can choose $\beta^{\prime}>\beta$  and  $q^{\prime}\geq2$ such that $\beta^{\prime}-\frac{1}{q^{\prime}}>\frac{1}{3}$  and we have $\interleave\boldsymbol{\omega}\interleave_{\beta}$, $\interleave\boldsymbol{\omega}_{\delta}\interleave_{\beta}\in L^{q^{\prime}}$. Furthermore, we have
\begin{equation*}
\left|\rho_{\beta, s, t}\left(\boldsymbol{\omega}_{\delta}, \boldsymbol{\omega}\right)\right|_{L^{\frac{q^\prime}{2}}} \leq C(q^{\prime}, \beta^{\prime}, H,T) \delta^{H-\beta^{\prime}}.
\end{equation*}
 Thus,
\begin{equation*}
 \lim_{\delta\rightarrow 0}\left|\rho_{\beta, s, t}\left(\boldsymbol{\omega}_{\delta}, \boldsymbol{\omega}\right)\right|_{L^{\frac{q^\prime}{2}}}=0.
 \end{equation*}
  Due to Kolmogorov's test criteria for rough paths\cite[Theorem 3.3, Proposition 15.24]{MR4266219,MR2604669}, it is necessary to require $\beta<\beta^{\prime}$.  Note that our consideration only  Lemma \ref{theorem 4.2}, \ref{theorem 4.3}, \ref{theorem 4.4}, Theorem \ref{theorem 4.5} on  $[0, T]$. For our purpose we need to extend the previous results  to $[-T,T]$.  We can extend $\boldsymbol{\omega}$ from $[0,T]$ to $[-T,T]$. Indeed, by Chen's identity,  for $s<0<t\in[-T,T]$ and $1\leq i,j\leq d$ we have
  	 $$\omega^{2,i,j}_{s,t}:=\omega^{2,i,j}_{s,0}+\omega^{2,i,j}_{0,t}+\omega^{1,i}(s)\omega^{i,j}(t),$$
where the definition of $\omega^{2,i,j}_{s,0}$ need to be checked for $s<0$. For $i\neq j$, we consider the definition of $\omega^{2,i,j}_{s,t}$ by an integral as follows
\begin{equation*}
\begin{aligned}
\omega^{2,i,j}_{s,0}&:=\int_{s}^{0}(\omega^{1,i}_{r}-\omega^{1,i}_{s})d\omega^{1,j}_{r}=\int_{0}^{-s}\theta_{s}\omega^{1,i}(r)d\theta_{s}\omega^{1,j}(r)\\
&=\lim_{|\mathcal{P}(0,-s)|\rightarrow 0}\sum_{[u,v]\in\mathcal{P}(0,-s)}(\omega^{1,i}_{u+s}-\omega^{1,i}_{s})(\omega^{1,j}_{v+s}-\omega^{1,j}_{u+s}),
\end{aligned}
\end{equation*}
where $\theta_{s}\omega^{1,i}$ is also a fractional Brownian motion  with the same rectangular increments of the covariance as  $\omega^{1,i}$ or $i\in\{1,\cdots ,d\}$. Using Lemma \ref{lemma 3.1},  Lemma \ref{lemma 3.2}, we can prove that the limit exists in $L^{2}$ sense as  Lemma \ref{theorem 4.2}, thus $\omega^{2,i,j}_{s,t}$ exists for $s<t\in \mathbb{R}$. For $i=j$, define
 \begin{equation*}
 \omega^{2,i,i}_{s,0}:=\frac{1}{2}(\omega^{1,i,i}_{s}-\omega^{1,i,i}_{0})^{2}.
 \end{equation*}
For Lemma \ref{theorem 4.3} and Lemma \ref{theorem 4.4}, we use the same method to extend the temporal area from $[0,T]$ to $[-T,T]$, thus we  obtain the convergence on $[-T,T]$, the proof is completed.
\end{proof}
Furthermore, we have the following theorem
\begin{theorem}\label{theorem 4.6}
%Let $\left\{\delta_{i}\right\}_{i\in \mathbb{N}}$ be a sequence  converging sufficiently fast to zero when $i\rightarrow \infty$, Then we have the following convergences:
%\begin{equation}
%\begin{aligned}
%&\lim_{i\rightarrow \infty}W_{\delta_{i}}(\cdot,\omega^{1})=\omega^{1} \quad \text{in}\quad C^{\beta}([-T,T];R^{d}), \\
%&\lim_{i\rightarrow \infty}\mathbb{W}_{\delta_{i}}(\cdot,\omega^{1})=\omega^{2} \quad \text{in}\quad C^{2\beta}([-T,T]^{2};R^{d})
%\end{aligned}
%\end{equation}
Let $\boldsymbol{\omega}=(\omega^{1},\omega^{2})$ be the canonical lift of the fractional Brownian motion and $\boldsymbol{\omega}_{\delta}=\left(W_{\delta}(\cdot,\omega^{1}),\mathbb{W}_{\delta}(\cdot,\omega^{1})\right)$ be the approximation of $\boldsymbol{\omega}$. Then we have
\begin{equation*}
\rho_{\beta,-T,T}(\boldsymbol{\omega},\boldsymbol{\omega}_{\delta})\rightarrow 0,\quad \text{as}\quad \delta\rightarrow 0
\end{equation*}
for any  $T>0$, $\beta\in(\frac{1}{3},\frac{1}{2})$.  Furthermore, the convergence takes place for all $\omega$ in a $\theta$-invariant set $\Omega^{\prime}$ of full measure.
\end{theorem}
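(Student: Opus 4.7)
The plan is to convert the quantitative $L^{q'/2}$-bound of Theorem~\ref{theorem 4.5} into pathwise convergence via a two-step Borel-Cantelli argument (one for a geometric subsequence, one to fill the gaps between its members), working on each $[-T,T]$ with $T\in\mathbb{N}$ separately and taking a countable intersection at the end. Fix $T \in \mathbb{N}$ and $\beta \in (\frac{1}{3}, \frac{1}{2})$; pick $\beta' \in (\beta, H)$ and $q' \geq 2$ large enough so that $\beta' - 1/q' > 1/3$ and so that the exponents below produce summable series. Theorem~\ref{theorem 4.5} and Chebyshev's inequality give
\begin{equation*}
\mathbb{P}\bigl(\rho_{\beta,-T,T}(\boldsymbol{\omega}_\delta,\boldsymbol{\omega}) > \delta^{\eta}\bigr) \leq C_T\, \delta^{(H-\beta'-\eta)q'/2}
\end{equation*}
for any $\eta \in (0, H-\beta')$. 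With $\delta_n = 2^{-n}$ the right-hand side is summable, so Borel-Cantelli yields a full-measure set on which $\rho_{\beta,-T,T}(\boldsymbol{\omega}_{\delta_n}, \boldsymbol{\omega}) \to 0$.

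The main obstacle is upgrading this subsequential convergence to convergence along every $\delta \to 0^+$, which is the feature highlighted as new relative to \cite{MR4266219}. I would derive, by adapting Lemmas~\ref{theorem 4.3}, \ref{theorem 4.4} and Theorem~\ref{theorem 4.5} to compare two approximations $\boldsymbol{\omega}_\delta$ and $\boldsymbol{\omega}_{\delta'}$ directly, a two-parameter estimate of the form
\begin{equation*}
\bigl|\rho_{\beta,-T,T}(\boldsymbol{\omega}_\delta, \boldsymbol{\omega}_{\delta'})\bigr|_{L^{q'/2}} \leq C\,|\delta - \delta'|^{\gamma}
\end{equation*}
for some $\gamma > 0$. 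The ingredients are bounds on the rectangular $\rho$-variation of the covariance of $W^i_\delta - W^i_{\delta'}$ and of the associated second-level iterated integrals, fed into Lemma~\ref{theorem 4.1} with $\epsilon = |\delta - \delta'|^{\gamma}$. A further dyadic Borel-Cantelli argument on the intervals $[\delta_{n+1}, \delta_n]$ then forces $\sup_{\delta \in [\delta_{n+1}, \delta_n]} \rho_{\beta,-T,T}(\boldsymbol{\omega}_\delta, \boldsymbol{\omega}_{\delta_n}) \to 0$ almost surely; combining with the first step produces a full-measure set $\Omega_T$ on which $\rho_{\beta,-T,T}(\boldsymbol{\omega}_\delta, \boldsymbol{\omega}) \to 0$ as $\delta \to 0^+$.

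To conclude, take $\Omega' = \bigcap_{T \in \mathbb{N}} \Omega_T$, still of full measure. For $\omega \in \Omega'$ and any $\tau \in \mathbb{R}$, the shift identity $\theta_\tau \mathbb{W}_\delta(\omega^1) = \mathbb{W}_\delta(\theta_\tau \omega^1)$ recorded earlier in this section gives $\rho_{\beta,-T,T}(\boldsymbol{\omega}_\delta(\theta_\tau\omega), \boldsymbol{\omega}(\theta_\tau\omega)) = \rho_{\beta,[-T+\tau,T+\tau]}(\boldsymbol{\omega}_\delta(\omega), \boldsymbol{\omega}(\omega))$, which is dominated by $\rho_{\beta,-T',T'}(\boldsymbol{\omega}_\delta(\omega), \boldsymbol{\omega}(\omega))$ for any integer $T' \geq T + |\tau|$; the latter tends to $0$ since $\omega \in \Omega_{T'}$. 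Hence $\theta_\tau\omega \in \Omega'$ and $\Omega'$ is $\theta$-invariant. The principal difficulty I anticipate is the two-parameter covariance estimate in the middle paragraph: it is precisely the flexibility of the rectangular $\rho$-variation machinery (which the paper already prefers over concavity of $\sigma^2$) that should make the bound go through uniformly in both $\delta$ and $\delta'$.
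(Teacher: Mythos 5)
Your first step (Borel--Cantelli along a subsequence of $\delta$'s, intersection over integer $T$) matches the paper, and your $\theta$-invariance argument via the shift identity is fine and arguably cleaner than the paper's. The gap is in your middle paragraph, which is exactly the step the paper advertises as its novelty. First, the two-parameter bound $|\rho_{\beta,-T,T}(\boldsymbol{\omega}_{\delta},\boldsymbol{\omega}_{\delta'})|_{L^{q'/2}}\le C|\delta-\delta'|^{\gamma}$ is the entire content of the hard step and you only assert it: it does not follow from the triangle inequality, which yields only $C\bigl((\delta\vee\delta')^{H-\beta'}\bigr)$, a quantity that can greatly exceed $|\delta-\delta'|^{\gamma}$ when $\delta$ and $\delta'$ are close; proving it would require fresh covariance computations for $W_{\delta}-W_{\delta'}$ and the associated second-level integrals, uniform in both parameters. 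Second, even granting that bound, ``a further dyadic Borel--Cantelli argument'' cannot by itself control $\sup_{\delta\in[\delta_{n+1},\delta_n]}\rho_{\beta,-T,T}(\boldsymbol{\omega}_{\delta},\boldsymbol{\omega}_{\delta_n})$: Borel--Cantelli handles countably many events, while the supremum runs over a continuum, so you would need a Kolmogorov--Chentsov/chaining argument in the parameter $\delta$, plus a pathwise continuity statement identifying the continuous modification with the actual process $\delta\mapsto\boldsymbol{\omega}_{\delta}$. Neither piece is routine.

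The paper avoids both issues by making the gap-filling step deterministic rather than probabilistic. It takes the polynomially decaying subsequence $\delta_i=i^{-4/(q'(H-\beta'))}$ and, for $\delta\in(\delta_{i+1},\delta_i]$, estimates $W_{\delta}(\cdot,\omega^1)-W_{\delta_i}(\cdot,\omega^1)$ and $\mathbb{W}_{\delta}(\omega^1)-\mathbb{W}_{\delta_i}(\omega^1)$ pathwise, using only the a.s.\ finiteness of $\interleave\omega^1\interleave_{\beta,[-n,n]}$. The resulting bounds are of ratio type, controlled by $(\delta_i-\delta)/\delta_i$ and $(\delta_i^2-\delta^2)/\delta^2$, which are $O(1/i)$ precisely because consecutive terms of a polynomially decaying sequence are multiplicatively close. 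Note that with your geometric choice $\delta_n=2^{-n}$ these ratios stay bounded away from zero, so a ratio-type estimate would not suffice and you would genuinely need the full $|\delta-\delta'|^{\gamma}$ modulus. To salvage your route you would have to prove the two-parameter moment bound and then run an honest Kolmogorov continuity argument for the rough-path-valued map $\delta\mapsto\boldsymbol{\omega}_{\delta}$; the simpler repair is to replace the middle paragraph by the paper's deterministic comparison along a polynomially decaying subsequence.
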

\begin{proof}
We divide  the proof of this theorem into two steps. Our idea is to find a sequence $\{\delta_{i}\}_{i\in\mathbb{N}}$ converging sufficiently fast to zero and prove that $\rho_{\beta,-T,T}(\boldsymbol{\omega}_{\delta_{i}}, \boldsymbol{\omega})\rightarrow 0$ takes place  in a $\theta$-invariant set $\Omega^{\prime}$ of full measure as $i\rightarrow \infty$, and then we need to illustrate the convergence relation between $\boldsymbol{\omega}_{\delta}$ and $\boldsymbol{\omega}_{\delta_{i}}$. Note that we only need to prove the convergence relation for $T=n,n\in\mathbb{N}$.

For the first step, namely  $\rho_{\beta,-n,n}(\boldsymbol{\omega}_{\delta_{i}}, \boldsymbol{\omega})\rightarrow 0$ as $i\rightarrow \infty$, the proof is similar to \cite[Theorem 4.6]{MR4266219}, we only give the outline of the proof. Choosing $
\delta_{i}=i^{-\frac{4}{q^\prime(H-\beta^\prime)}}$. According to Theorem \ref{theorem 4.5} and Chebyshev's inequality, we obtain
\begin{equation*}
\mathbb{P}_{H}(\rho_{\beta,-n,n}(\boldsymbol{\omega}_{\delta_{i}},\boldsymbol{\omega})>\epsilon)\leq \frac{C(q^\prime,\beta^\prime,H,n)}{\epsilon^{q^{\prime}}}i^{-4}
\end{equation*}
 for any $\epsilon>0$. In particular, we  choose $\epsilon=\left(\frac{i}{2}\right)^{-\frac{1}{q^{\prime}}}$. Thus, the Borel-Cantelli lemma shows that there exists a set of  full measure $\Omega^{(n)}\subset \Omega$  and $i_{0}(\omega,n)$ for $\omega\in \Omega^{(n)}$ such that
 \begin{equation*}
 \rho_{\beta,-n,n}(\boldsymbol{\omega}_{\delta_{i}},\boldsymbol{\omega})\leq \left(\frac{i}{2}\right)^{-\frac{1}{q^{\prime}}}
 \end{equation*}
 as $i\geq i_{0}(\omega,n)$. Let $\hat{\Omega}^{0}=\cap_{n\geq1} \Omega^{(n)}$, then $\mathbb{P}_{H}(\hat{\Omega}^{0})=1$. In addition, replacing $\omega$ by $\theta_{\tau}\omega$ we introduce a set of  full measure $\hat{\Omega}^{\tau},\tau\in R$.  For simplicity, we only consider the argument of $\theta$-invariance  for the second order process.  Assume $\tau,q, s,t\in R,\omega^{1}\in \Omega$. Since $\boldsymbol{\omega}_{\delta_i}$ is the smooth approximation, we have
\begin{equation*}
 \theta_{\tau}\mathbb{W}_{\delta_{i}}(\cdot,\omega^{1})_{s,t}=\mathbb{W}_{\delta_{i}}(\cdot,\omega^{1})_{s+\tau,t+\tau}.
 \end{equation*}
 Then
\begin{equation*}
 \theta_{\tau+q}\mathbb{W}_{\delta_{i}}(\cdot,\omega^{1})_{s,t}=\theta_{\tau}\mathbb{W}_{\delta_{i}}(\cdot,\omega^{1})_{s+q,t+q},
\end{equation*}
 let $\delta_{i}\rightarrow 0$ as $i\rightarrow \infty$, we obtain
  \begin{equation*}
  \theta_{\tau+q}\omega^{2}_{s,t}=\theta_{\tau}\omega^{2}_{s+q,t+q}
  \end{equation*}
  and $\hat{\Omega}^{\tau}\subset \hat{\Omega}^{\tau+q}$. In addition, we have
 \begin{equation*}
  \theta_{\tau}\omega^{2}_{s,t}=\theta_{\tau+q}\omega^{2}_{s-q,t-q},
  \end{equation*}
  which shows that $\hat{\Omega}^{\tau+q}\subset \hat{\Omega}^{\tau}$. Thus we have $\hat{\Omega}^{\tau}=\hat{\Omega}^{0},\tau\in R$. Then for any $q\in R$, $\theta_{-q}\hat{\Omega}^{0}=\theta^{-1}_{q}\hat{\Omega}^{0}=\theta^{-1}_{q}\hat{\Omega}^{q}=\hat{\Omega}^{0}$. Hence $\Omega^{\prime}=\hat{\Omega}^{0}$.

  For the second step, namely, $\rho_{\beta,-n,n} (\boldsymbol{\omega}_{\delta},\boldsymbol{\omega}_{\delta_i})\rightarrow 0$  for each fixed   $\omega^1\in\Omega^{\prime}$ as $\delta\rightarrow 0$. We note that there exists a $i=i(\delta)$ such that $\delta\in(\delta_{i+1},\delta_{i}]$ for each $\delta\in(0,1)$. We only consider the second process, the path component can be studied similarly.  For any $s<t\in[-n,n]$, we have
  \begin{align*}
   \mathbb{W}_{\delta}(\cdot,\omega^{1})_{s,t}\!\!-\!\!\mathbb{W}_{\delta_{i}}(\cdot,\omega^{1})_{s,t}&\!=\!\frac{1}{\delta^{2}}\int_{s}^{t}\int_{s}^{r}\theta_{r^{\prime}}
   \omega^{1}_\delta\theta_{r}
   \omega^{1}_\delta dr^{\prime}dr\!-\!\frac{1}{\delta_{i}^{2}}\int_{s}^{t}\int_{s}^{r}\theta_{r^{\prime}}
   \omega^{1}_{\delta_{i}}\theta_{r}
   \omega^{1}_{\delta_i} dr^{\prime}dr\\
   =&\left(\frac{1}{\delta^2}-\frac{1}{\delta_{i}^2}\right)\int_{s}^{t}\int_{s}^{r}\theta_{r^{\prime}}
   \omega^{1}_\delta\theta_{r}
   \omega^{1}_\delta dr^{\prime}dr\\
   +&\frac{1}{\delta_{i}^2}\int_{s}^{t}\int_{s}^{r}\theta_{r^{\prime}}
   \omega^{1}_{\delta_{i}}\theta_{r}
   \omega^{1}_{\delta_i}-\theta_{r^{\prime}}\omega^{1}_\delta\theta_{r}
   \omega^{1}_\delta dr^{\prime}dr\\
=:& I_{s,t}^{1}+I_{s,t}^{2}.
  \end{align*}
  By the H\"{o}lder regularity of the path, we obtain
\begin{equation}\label{3.28*}
	\begin{aligned}
   I^{1}_{s,t}&=\left(\frac{\delta_{i}^2-\delta^2}{\delta_{i}^2\delta^2}\right)\int_{s}^{t}\int_{s}^{r}(\omega^1_{\delta+r^\prime}-\omega^1_{r^\prime})(\omega^1_{\delta+r}-\omega^1_{r})dr^\prime dr,\\
   &\leq \left(\frac{(\delta_{i}^2-\delta^2)\interleave \omega^1\interleave_{\beta,[-n,n]}^2}{\delta_{i}^2\delta^2}\right)\int_{s}^{t}\int_{s}^{r}\delta^2dr^\prime dr\\
   &\leq C(n)(t-s)^2\frac{(\delta_{i}^2-\delta^2)\delta^{2\beta}}{\delta_{i}^2\delta^2}.
   \end{aligned}
   \end{equation}
Similarly, we have
   \begin{equation}\label{3.29*}
   I_{s,t}^{2}\leq C(n)(t-s)^{2}\frac{(\delta_{i}-\delta)^{\beta}\delta_{i}^{\beta}}{\delta_{i}^{2}}.
   \end{equation}
By the definition of $\delta_{i}$ and $\delta\in (\delta_{i+1},\delta_{i}]$,  then
  \begin{equation}
  \frac{\delta_{i}^{2}-\delta^{2}}{\delta^{2}}\leq \frac{i^{-\frac{8}{q^\prime(H-\beta^\prime)}}-(i+1)^{-\frac{8}{q^{\prime}(H-\beta^\prime)}}}{(i+1)^{-\frac{8}{q^\prime(H-\beta^\prime)}}}=\left(1+\frac{1}{i}\right)^{\frac{8}{q^\prime(H-\beta^\prime)}}-1.
  \end{equation}
  Let $f(x)=(1+x)^{\alpha}-Cx-1$, where $x\in(0,1], \alpha>0$. It is sufficient to show that $f(x)\leq 0$ if the constant $C>\alpha 2^{\alpha-1}$. Indeed, since $C>\alpha 2^{\alpha-1}$, then the derivative $f^{\prime}(x)\leq 0$. Thus, $f(x)\leq f(0)=0$.  Let $x=\frac{1}{i}$ and $\alpha=\frac{8}{q^\prime(H-\beta^\prime)}$. Hence, there exists a constant $C>\frac{8}{q^\prime(H-\beta^\prime)}2^{\frac{8}{q^\prime(H-\beta^\prime)}-1}$ such that
  \begin{equation}\label{3.31*}\frac{\delta_{i}^2-\delta_{i+1}^2}{\delta^2_{i+1}}\leq C\frac{1}{i},\end{equation}
  similarly, the above constant can guarantee that the inequality
 \begin{equation}\label{3.32*}\frac{\delta_i-\delta}{\delta_i}\leq C\frac{1}{i}\end{equation}
  holds. So using the inequalities \eqref{3.28*}-\eqref{3.29*} and \eqref{3.31*}-\eqref{3.32*}, we obtain
  \begin{equation*}
  I_{s,t}^{1}\leq C(n)(t-s)^2i^{\frac{8-8\beta}{(H-\beta^{\prime})q^{\prime}}-1} \quad \text{and} \quad I_{s,t}^{2}\leq C(n)(t-s)^2i^{\frac{8-8\beta}{(H-\beta^{\prime})q^{\prime}}-\beta},
  \end{equation*}
  then according to Theorem \ref{theorem 4.5}, choosing  $q^{\prime}>\max\left\{\frac{8-8\beta}{(H-\beta^{\prime})\beta},\frac{1}{\beta^{\prime}-\beta}\right\}$, we obtain
  \begin{equation*}
  \interleave \mathbb{W}_{\delta}(\cdot,\omega^{1})-\mathbb{W}_{\delta_{i}}(\cdot,\omega^{1})\interleave_{2\beta,[-n,n]}\rightarrow 0 \quad \text {as} \quad \delta\rightarrow 0.
  \end{equation*}
  We complete the proof of this  theorem.
\end{proof}
\section{Wong-Zakai approximation of the rough differential equation}
In this section, we  consider  the following rough differential equation driven by a rough path via the canonical lift(see \cite[page 156]{MR2604669}) of fractional Brownian motion
\begin{equation}\label{5.1}
dy=(Ay+f(y)dt+g(y)d\boldsymbol{w}
\end{equation}
and its approximation form
\begin{equation}\label{5.2}
dy^{\delta}=(Ay^{\delta}+f(y^{\delta}))dt+g(y^{\delta})d\boldsymbol{w}_{\delta}
\end{equation}
	with initial data $x$ and $x^{\delta}$ respectively, where we assume  $A\in \mathbb{R}^{m\times m}$, $f:\mathbb{R}^{m}\rightarrow \mathbb{R}^{m}$, $g:\mathbb{R}^{m}\rightarrow \mathbb{R}^{m\times d}$, and the driving path $\boldsymbol{\omega}\in \mathcal{C}^{\beta}([0,T];\mathbb{R}^{d})\subset \mathcal{C}^{p-var}([0,T];\mathbb{R}^{d})$, with $\beta\in(\frac{1}{3},\frac{1}{2})$ and $p=\frac{1}{\beta}$. $\boldsymbol{\omega}_{\delta}$ is defined in Section \ref{section-intersection}.  For the rough differential equation \eqref{5.1}, Duc established the existence and uniqueness result \cite[Theorem 2.1]{duc2020asymptotic,MR4385780}   in the Gubinelli sense,  Riedel and Scheutzow\cite{MR3567487} achieved results for solutions of \eqref{5.1} in the sense of Friz-Victoir\cite{MR2604669}. Although Friz and Hairer\cite{MR4174393} constructed the theory of rough differential equations, their stability of solutions with rough noise can not be applied here. Indeed,  \cite[Theorem 3.1, 4.3]{MR3567487} required that the drift term is locally Lipschitz  and linear growth,  however, the  diffusion term $g(y)$ is $C_b^{\gamma}(R^m),\gamma>3$, namely $D^ig(y),i\in{0,1,2,3}$ and  $(\gamma-3)$-H\"{o}lder semi-norm of $D^3g(y)$ are uniform bounded. Friz and Victoir\cite[Theorem 12.10]{MR2604669} imposed  conditions that the drift term is at least differentiable and its derivative is bounded. Friz and Hairer\cite[page 141]{MR4174393} adopted the same method as \cite{MR2604669}, namely, the time variable as the component of the path $(t,\omega_{t})$, it means that  \cite[Theorem 8.5]{MR2604669} can be applied. However, it  requires that the drift term is three times continuously differentiable and all derivatives are bounded. Based on the work\cite{duc2020asymptotic,MR4385780},  we want to get the Wong-Zakai approximation of the solution under  framework of  controlled rough paths.  Compared with these results,  our conditions are weaker, namely we assume :
\begin{description}
                                                                        % \item[\textbf{H1:}] $A\in \mathbb{R}^{m\times m}$ is a matrix;
                                                                         \item[\textbf{H1:}] $f:\mathbb{R}^{m}\rightarrow\mathbb{R}^{m}$ is globally Lipschitz continuous with Lipschitz constant $C_{f}$, the function $g\in C^{3}_{b}$, namely, it is  three times continuously differentiable and all derivative are bounded. Moreover, let
                                                                           \begin{equation*}
                                                                           C_{g}:=max\left\{\|g\|_{\infty},\|Dg\|_{\infty},\|D^{2}g\|_{\infty},\|D^{3}g\|_{\infty}\right\}.
                                                                           \end{equation*}

                                                                           \end{description}
%\begin{remark}
% Assumption \textbf{H1} ensures that there exists a constant $C_{A}\geq 1, \lambda_{A}\geq 0$ such that
% \begin{equation}\label{5.3}
% \begin{aligned}
% \|S(\cdot)\|_{\infty,[a,b]}&\leq C_{A}e^{-\lambda_{A}a},\\
% \interleave S(\cdot)\interleave_{p-var,[a,b]}&\leq\|A\|C_{A}e^{-\lambda_{A}a}(b-a), \quad\forall 0\leq a <b,
% \end{aligned}
% \end{equation}
% where $S(t)=e^{At}$ is a semigroup generated by $A$ and $\|A\|=\sup_{\|y\|\leq 1}\left|\frac{Ay}{y}\right|$. The proof of these inequality can be found in \cite[Chapter 1,\S3]{MR1351004} and \cite[Proposition 2.2]{duc2019asymptotic}.
%\end{remark}
\begin{remark}\label{remark 5.1}
% We consider that all eigenvalues of $A$ have a negative real part for existence of random attractors.
 We can consider a general nonlinear $f(u)$ instead of $Au+f(u)$ for the well-posedness  of a solution.
In addition,   \eqref{5.2} can be interpreted as a (random) non-autonomous dynamical system defined by an ordinary differential equation with coefficients which are Lipschitz
continuous in the state variable and continuous in the time variable. Hence this equation has a unique global solution for every initial condition. This solution coincides with the solution in the rough path sense. Since the approximate noise is smooth, then the Gubinelli derivative is not unique. However, the path component $y^\delta$ is unique. Indeed, let $(y^\delta_1,(y^\delta_1)^\prime)$ and $(y^\delta_2,(y^\delta_2)^\prime)$ be the solution of \eqref{5.2}, then $y^\delta_1$ and $y^\delta_2$ are the solution of non-autonomous ordinary differential equation driven by a smooth path. Hence, the uniqueness of non-autonomous ordinary differential equation shows that $y^\delta_1=y^\delta_2$. Then we choose a specific Gubinelli derivative $g(y^{\delta})$ in this paper. So we use rough paths theory to prove the existence and uniqueness of \eqref{5.2}.  Thus, the solution $y^\delta$ of equation \eqref{5.2} also generates a random dynamical system.
\end{remark}
%Note that we will frequently use $p$-variation norm instead of $\beta$-H\"{o}lder norm in the following consideration, because the $p$-variation norm makes our consideration more simple. As the study of the equation \eqref{5.1}, the rough path theory is also applied here. Thus
 % We will recall some facts on rough integrals and controlled rough paths.
\subsection{ Controlled rough path and rough integral}
\begin{definiton}\label{def 5.1}
The path $y\in C^{\beta}(I,R^{m}),\beta\in(\frac{1}{3},\frac{1}{2})$ is called controlled by $\omega$ if there exist $y^{\prime}\in C^{\beta}(I,\mathbb{R}^{m\times d})$ and $R^{y}\in C^{2\beta}(I^{2},\mathbb{R}^{m})$ such that
\begin{equation}
\begin{aligned}
y_{s,t}=y^{\prime}_{s}\omega_{s,t}+R^{y}_{s,t}
\end{aligned}
\end{equation}
for all $s<t\in I$. $y^{\prime}$ is called Gubinelli derivative of $y$ and $R^{y}$ is the remainder term.
\end{definiton}

We denote by  $\mathcal{D}_{\omega}^{2\beta}(I,R^{m})$ the set of all $(y,y^{\prime})$ which are controlled by $\omega$ and equipped with the norm
\begin{equation*}
\|y,y^{\prime}\|_{\omega,2\beta}=\|y_{T_{1}}\|+\|y^{\prime}_{T_{1}}\|+\interleave y^{\prime} \interleave_{\alpha,I}+\interleave R^{y}\interleave_{2\alpha,I}.
\end{equation*}
Then $\mathcal{D}_{\omega}^{2\beta}(I,R^{m})$ is a Banach space \cite[page 70]{MR4174393}. Note  that in general the Gubinelli derivative is not uniquely determined, but the condition of truly rough\cite[page 109]{MR4174393} can guarantee the uniqueness of the Gubinelli derivative. In our paper the fractional Brownian motion is truly rough.
\begin{remark}\label{Rem4.2}
The Gubinelli derivative of the Riemann integral $\int_{0}^t Ay_{r}+f(y_{r})dr$ in this paper is $0$, namely $\left(\int_0^t Ay_{r}+f(y_{r})dr,0\right)\in \mathcal{D}_{\omega}^{2\beta}(I,R^{m})$, it is easy to check this fact by the definition of $\mathcal{D}_{\omega}^{2\beta}(I,R^{m})$. So
$Ay+f(y)$ is contained in the remainder term  $R^{y}$.
\end{remark}
For the composition of a smooth function  and a rough path we have
\begin{lemma}
Let $(y,y^{\prime})\in \mathcal{D}_{\omega}^{2\beta}(I,R^{m})$ and $g\in C^{2}_{b}(R^{m},R^{m\times d})$. Then $g(y)$ is also controlled by $\omega$, where
\begin{equation}
\begin{aligned}
\left((g(y))_{t}^{\prime},R^{g(y)}_{s,t}\right)=\left(Dg(y_{t})y_{t}^{\prime},g(y_{t})-g(y_{s})-Dg(y_{s})y_{s}^{\prime}\omega_{s,t}\right).
\end{aligned}
\end{equation}
\end{lemma}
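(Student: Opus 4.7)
The plan is to prove the two parts of the definition of a controlled rough path: first that $(g(y))_t' := Dg(y_t) y_t'$ belongs to $C^{\beta}(I,\mathbb{R}^{m\times d})$, and second that the proposed remainder
\[
R^{g(y)}_{s,t} := g(y_t) - g(y_s) - Dg(y_s) y_s' \,\omega_{s,t}
\]
belongs to $C^{2\beta}(I^2,\mathbb{R}^m)$. The $C^1$-regularity of $g$ would suffice for the expansion identity itself, but we need $g\in C^2_b$ to secure the $2\beta$-H\"older bound on the remainder.

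First I would verify the $\beta$-H\"older estimate on $(g(y))'$. Split
\[
Dg(y_t)y_t' - Dg(y_s)y_s' = Dg(y_t)(y_t' - y_s') + (Dg(y_t) - Dg(y_s))y_s'.
\]
The first term is bounded by $\|Dg\|_\infty \interleave y'\interleave_{\beta,I}(t-s)^\beta$, and the second, using the mean value theorem applied to $Dg$, by $\|D^2 g\|_\infty \|y_s'\| \interleave y\interleave_{\beta,I}(t-s)^\beta$. Here one uses that $(y,y')\in \mathcal D^{2\beta}_\omega$ implies $y\in C^\beta$ (a standard consequence of the controlled rough path decomposition together with $\omega\in C^\beta$).

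Next I would establish the $2\beta$-H\"older bound on $R^{g(y)}$. Apply Taylor's theorem with integral remainder to $g$ at $y_s$:
\[
g(y_t) - g(y_s) = Dg(y_s)(y_t - y_s) + \int_0^1 (1-\tau) D^2 g(y_s + \tau(y_t-y_s)) \,d\tau \,(y_{s,t},y_{s,t}).
\]
Substituting the controlled decomposition $y_{s,t} = y_s'\omega_{s,t} + R^y_{s,t}$ into the first term gives
\[
R^{g(y)}_{s,t} = Dg(y_s) R^y_{s,t} + \int_0^1 (1-\tau) D^2 g(\cdots) \,d\tau \,(y_{s,t},y_{s,t}).
\]
The first piece is bounded by $\|Dg\|_\infty \interleave R^y\interleave_{2\beta,I}(t-s)^{2\beta}$. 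The second piece is bounded by $\tfrac{1}{2}\|D^2 g\|_\infty \|y_{s,t}\|^2 \le \tfrac{1}{2}\|D^2 g\|_\infty \interleave y\interleave_{\beta,I}^2 (t-s)^{2\beta}$. Hence $\interleave R^{g(y)}\interleave_{2\beta,I}<\infty$, and adding the two estimates yields $(g(y),Dg(y)y')\in \mathcal D^{2\beta}_\omega(I,\mathbb{R}^{m\times d})$.

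The calculation itself is entirely routine; the only place that requires a small amount of care is the bookkeeping that $D^2g$ is what is needed to upgrade the remainder from $\beta$-H\"older (which a $C^1$ argument would give) to $2\beta$-H\"older, and the fact that we must exploit both contributions to $y_{s,t}$ (the rough part $y_s'\omega_{s,t}$ canceling against the Gubinelli derivative, and the remainder $R^y$ being absorbed into $R^{g(y)}$) simultaneously. No genuine obstacle arises because $g\in C^2_b$ ensures all derivative norms entering the bounds are finite uniformly in the evaluation point.
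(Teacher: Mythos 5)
Your proof is correct and is essentially the standard argument: the paper does not prove this lemma itself but cites \cite[Lemma 7.3]{MR4174393}, whose proof is exactly your Taylor-expansion-with-integral-remainder decomposition $R^{g(y)}_{s,t}=Dg(y_s)R^y_{s,t}+\int_0^1(1-\tau)D^2g(y_s+\tau y_{s,t})\,d\tau\,(y_{s,t},y_{s,t})$ together with the product-rule splitting for the H\"older bound on $Dg(y)y'$. No discrepancy to report.
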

The proof of the Lemma can be found in \cite[Lemma 7.3]{MR4174393}.

Now we can use the controlled rough path to define a rough integral. %Let $\Xi$ be an element of $C^{\beta}(I^{2},R^{m})$ and $\Xi_{t,t}=0$ so that
%$$\|\Xi\|_{\beta,\gamma,I}:=\interleave\Xi\interleave_{\beta,I}+\interleave\delta\Xi\interleave_{\gamma,I}< \infty,$$
%where $\delta\Xi_{s,u,t}=\Xi_{s,t}-\Xi_{s,u}-\Xi_{u,t}$ for $s<u<t\in I, \beta\in (\frac{1}{3},\frac{1}{2}), \gamma>1$  and $\interleave\delta\Xi\interleave_{\gamma,I}=\sup_{s<u<t}\frac{|\delta\Xi_{s,u,t}|}{(t-s)^{\gamma}}$. We denote this space  by $C^{\beta,\gamma}(I^{2},\mathbb{R}^{m})$.
Hence  based on the Sewing Lemma\cite[Lemma 4.2]{MR4174393}, the rough integral can be defined,
%\begin{lemma}\label{sewing lemma}
%For any $\beta\in (\frac{1}{3},\frac{1}{2})$ and $\gamma>1$,  there exists a unique continuous linear map $\mathcal{I}:C^{\beta,\gamma}(I^{2},\mathbb{R}^{m})\rightarrow C^{\beta}(I,R^{m})$ such that $(\mathcal{I}\Xi)_{0}=0$ satisfying the following estimate
% \begin{equation}
% \begin{aligned}
% \left\|\mathcal{I}\Xi_{s,t}-\Xi_{s,t}\right\|\leq C|t-s|^{\gamma},
% \end{aligned}
% \end{equation}
% where $C$ depends on $\interleave \delta\Xi\interleave_{\gamma}$  and $\beta$.
% \end{lemma}
%Let us consider the rough integral $\int_{s}^{t}y_{r}d\boldsymbol{\omega}_{r}$. Choose $\Xi_{s,t}=y_{s}\omega_{s,t}^{1}+y^{\prime}_{s}\omega^{2}_{s,t}$ for all $s<t\in I$. Then we have that
%$$\Xi_{s,t}-\Xi_{s,u}-\Xi_{u,t}=-R_{s,u}^{y}\omega^{1}_{u,t}-y^{\prime}_{s,u}\omega^{2}_{u,t}\quad s<u<t\in I.$$
%Thus $\Xi\in C^{\beta,3\beta}(I, \mathbb{R}^{m})$. By Lemma \ref{sewing lemma}, the rough integral  can be defined as follows
%\begin{equation}
%\int_{s}^{t}y_{r}d\boldsymbol{\omega}_{r}:=\lim_{|\mathcal{P}(I)|\rightarrow 0}\sum_{[u,v]\in\mathcal{P}(I)}y_{u}\omega_{u,v}^{1}+y^{\prime}_{u}\omega^{2}_{u,v}
%\end{equation}
and there is a constant $C_{\beta}>1$ such that
\begin{equation}\label{5.8}
\begin{aligned}
&\left|\int_{s}^{t}y_{r}d\boldsymbol{\omega}_{r}-y_{s}\omega^{1}_{s,t}-y_{s}^{\prime}\omega^{2}_{s,t}\right|\\
\leq &C_{\beta}(t-s)^{3\beta}(\interleave\omega^{1}\interleave_{\beta,[s,t]}\interleave R^{y}\interleave_{2\beta,[s,t]^{2}}+\interleave y^{\prime} \interleave_{\beta,[s,t]}\interleave \omega^{2} \interleave_{2\beta,[s,t]^{2}}).
\end{aligned}
\end{equation}
Furthermore, we  consider the above results under the $p$-variation norm. We define the $p$-variation norm of the control rough path as follows
\begin{equation}
\begin{aligned}
\left\|(y,y^{\prime})\right\|_{\omega,p,I}=\|y_{T_{1}}\|+\|y^{\prime}_{T_{1}}\|+\interleave y^{\prime}\interleave_{p-var,I}+\interleave R^{y}\interleave_{q-var,I^{2}}.
\end{aligned}
\end{equation}
Then there exists a $C_{p}>1$ such that \eqref{5.8} can be replaced by
\begin{equation}\label{5.10}
\begin{aligned}
&\left|\int_{s}^{t}y_{r}d\boldsymbol{\omega}_{r}-y_{s}\omega^{1}_{s,t}-y_{s}^{\prime}\omega^{2}_{s,t}\right|\\
\leq &C_{p}\left(\interleave\omega^{1}\interleave_{p-var,[s,t]}\interleave R^{y}\interleave_{q-var,[s,t]^{2}}+\interleave y^{\prime} \interleave_{p-var,[s,t]}\interleave \omega^{2} \interleave_{q-var,[s,t]^{2}}\right).
\end{aligned}
\end{equation}
It is directly obtained by \cite[Lemma 6.2]{MR2604669} and \eqref{5.8}.
%Indeed, according to \cite[Lemma 6.2]{MR2604669} and \eqref{5.8}, let $\Gamma_{s,t}=\int_{s}^{t}y_{r}d\boldsymbol{\omega}_{r}-y_{s}\omega^{1}_{s,t}-y_{s}^{\prime}\omega^{2}_{s,t}$ and define  the control function(see \cite{MR2604669})  $$\hat{\omega}(s,t):=|t-s|$$
%and
%$$\omega(s,t)=\left(\interleave y^{\prime}\interleave_{p-var,[s,t]}\interleave\omega^{2}\interleave_{q-var,[s,t]^{2}}+\interleave R^{y}\interleave_{q-var,[s,t]^{2}}\interleave\omega^{1}\interleave_{p-var,[s,t]}\right)^{\frac{1}{\theta}},  \theta>1.$$
%It is clear that
%$$\lim_{r\rightarrow 0}\sup_{s<t:\hat{\omega}(s,t)<r}\frac{\Gamma_{s,t}}{r}=0,$$
%and for $s<u<t$
%$$|\Gamma_{s,t}-\Gamma_{s,u}-\Gamma_{u,t}|\leq \omega(s,t)^{\theta}.$$
%Then
%$$|\Gamma_{s,t}|\leq \frac{1}{1-2^{1-\theta}}\bigg(\interleave y^{\prime}\interleave_{p-var,[s,t]}\interleave\omega^{2}\interleave_{q-var,[s,t]^{2}}+\interleave R^{y}\interleave_{q-var,[s,t]^{2}}\interleave\omega^{1}\interleave_{p-var,[s,t]}\bigg).$$
\subsection{Existence and uniqueness theorem}
Firstly, we introduce a  sequence of stopping times which is presented in \cite{MR3112937}. Consider  $\frac{1}{p}\in(\frac{1}{3},\frac{1}{2})$ and $\eta\in(0,1)$, we define a  sequence of stopping time as follow
$$\tau_{0}=T_{1},\quad \tau_{i+1}:=\inf\{t>\tau_{i}:\interleave \boldsymbol{\omega}\interleave_{p-var,[\tau_{i},t]}=\eta\}\wedge T_{2}.$$
Then $N_{\eta,I,p}(\boldsymbol{\omega})$ is defined by $\sup\{i\in \mathbb{N}:\tau_{i}\leq T_{2}\}$. It is clear that
$$N_{\eta,I,p}(\boldsymbol{\omega})\leq 1+\eta^{-p}\interleave\boldsymbol{\omega} \interleave^{p}_{p-var,I}.$$
Indeed, we can obtain the estimate by Lemma \ref{Lemma 2.1}, namely
$$(N_{\eta,I,p}(\boldsymbol{\omega})-1)\eta^{p}=\sum_{k=0}^{N_{\eta,I,p}(\boldsymbol{\omega})-2}\interleave \boldsymbol{\omega}\interleave^{p}_{p-var,[\tau_{k},\tau_{k+1}]}\leq \interleave \boldsymbol{\omega}\interleave^{p}_{p-var,I}. $$
Based on the sequence of stopping times,  the solution of equation \eqref{5.1} can be constructed and has the following estimates
\begin{lemma}\label{lemma 5.3}
Let $\eta=\frac{1}{4C_{p}C_{g}}$, Then there exists a unique solution $(y,g(y))$ for \eqref{5.1} with any initial data on the interval $[\tau,\tau+T],\tau\in R,T>0$, and have the following estimates
\begin{equation*}
\begin{aligned}
\|y\|_{\infty,[\tau,\tau+T]}\leq \left[\|y_{\tau}\|+\left(\frac{f(0)}{L}+\frac{1}{C_{p}}\right)N_{\frac{1}{4C_{p}C_{g}},[\tau,\tau+T],p}(\boldsymbol{\omega})\right]e^{4LT},
\end{aligned}
\end{equation*}
\begin{equation*}
\begin{aligned}
&\|y_{\tau}\|+\interleave y,R^{y} \interleave_{p-var,[\tau,\tau+T]}\nonumber\\
\leq&\left[\|y_{\tau}\|+\left(\frac{f(0)}{L}+\frac{1}{C_{p}}\right)N_{\frac{1}{4C_{p}C_{g}},[\tau,\tau+T],p}(\boldsymbol{\omega})\right]e^{4LT}N^{\frac{p-1}{p}}_{\frac{1}{4C_{p}C_{g}},[\tau,\tau+T],p}(\boldsymbol{\omega}),
\end{aligned}
\end{equation*}
where $L=\|A\|+C_{f}$ and $\interleave y,R^{y}\interleave_{p-var,[s,t]}:=\interleave y \interleave_{p-var,[s,t]}+\interleave R^{y}\interleave_{q-var,[s,t]^{2}}.$
\end{lemma}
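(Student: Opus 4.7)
The plan is to construct the solution iteratively on the intervals $[\tau_i,\tau_{i+1}]$ determined by the stopping times, exploiting the fact that on each such interval $\interleave\boldsymbol{\omega}\interleave_{p-var,[\tau_i,\tau_{i+1}]} = \eta$ is small enough to make the solution map a contraction, and then to concatenate and iterate the resulting estimates.

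First I would set up the local fixed-point problem on $[\tau_i,\tau_{i+1}]$. Given $(y_{\tau_i})$, consider the map $\mathcal{M}$ sending a controlled rough path $(y,g(y))\in\mathcal{D}_\omega^{2\beta}([\tau_i,\tau_{i+1}],\mathbb{R}^m)$ to
\begin{equation*}
\mathcal{M}(y,g(y))_t \;=\; y_{\tau_i} + \int_{\tau_i}^t (A y_r + f(y_r))\,dr + \int_{\tau_i}^t g(y_r)\,d\boldsymbol{\omega}_r,
\end{equation*}
with Gubinelli derivative $g(y)$. By Remark \ref{Rem4.2} the drift part contributes only to the remainder. Using the rough integral bound \eqref{5.10} together with the estimates for the composition $g(y)$ (a controlled path with derivative $Dg(y)g(y)$ and remainder controlled by $C_g$ and $\interleave y,R^y\interleave_{p-var}$), I would show that with the choice $\eta=\frac{1}{4C_pC_g}$ the map $\mathcal{M}$ sends a suitable ball in $\mathcal{D}_\omega^{2\beta}([\tau_i,\tau_{i+1}],\mathbb{R}^m)$ into itself and is a contraction there. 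The factor $\frac{1}{4}$ is chosen precisely so that the rough-integral term picks up a Lipschitz constant $C_pC_g\cdot\eta=\tfrac14$, which dominates the contribution of the Young/drift terms on short intervals. Banach's fixed point theorem then delivers local existence and uniqueness of $(y,g(y))$ on $[\tau_i,\tau_{i+1}]$.

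Next, I would concatenate the local solutions: since $\tau_0=\tau$ and $\tau_{N+1}\geq \tau+T$ where $N=N_{\eta,[\tau,\tau+T],p}(\boldsymbol{\omega})$, the local solutions patch together into a global solution on $[\tau,\tau+T]$ by uniqueness at each endpoint. To obtain the supremum bound, I would iterate the one-step estimate obtained on $[\tau_i,\tau_{i+1}]$: controlling $\|y\|_{\infty,[\tau_i,\tau_{i+1}]}$ by $\|y_{\tau_i}\|$ plus $L(\tau_{i+1}-\tau_i)\|y\|_{\infty,[\tau_i,\tau_{i+1}]}+|f(0)|(\tau_{i+1}-\tau_i)$ from the drift and by a term of order $C_g\eta+C_p\cdot(\text{remainder})\lesssim \frac{1}{C_p}$ from the rough integral via \eqref{5.10}. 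Rearranging and using a discrete Gronwall-type argument across the $N$ intervals then gives the claimed bound
\begin{equation*}
\|y\|_{\infty,[\tau,\tau+T]} \leq \Bigl[\|y_\tau\| + \Bigl(\tfrac{f(0)}{L}+\tfrac{1}{C_p}\Bigr) N\Bigr] e^{4LT}.
\end{equation*}
Finally, the $p$-variation bound follows by combining the intervalwise bounds $\interleave y,R^y\interleave_{p-var,[\tau_i,\tau_{i+1}]}\lesssim \|y\|_{\infty,[\tau_i,\tau_{i+1}]}$ with Lemma \ref{Lemma 2.1}, which produces the $N^{(p-1)/p}$ factor in the second displayed inequality.

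The main obstacle I anticipate is the careful bookkeeping of constants in the local contraction step, particularly in bounding $\interleave R^{g(y)}\interleave_{q-var}$ in terms of $C_g$, $\interleave y\interleave_{p-var}$, and $\interleave R^y\interleave_{q-var}$, and ensuring that the drift term (which only contributes to the remainder and whose coefficient need only be Lipschitz, in contrast to the $C^3$ assumptions of \cite{MR3567487,MR2604669}) does not destroy the contraction on intervals of length less than $T$; this forces one to calibrate the ball radius and use the splitting $|f(y)|\leq C_f|y|+|f(0)|$ so that the linear growth is absorbed into the exponential Gronwall factor $e^{4LT}$ rather than inflating the constants in front of $N$.
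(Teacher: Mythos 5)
Your proposal is correct and follows essentially the same route as the paper, which itself only sketches this lemma by deferring to the greedy stopping-time/local-contraction argument of Duc (\cite[Theorem 3.8]{duc2020asymptotic} and \cite{MR4385780}): local existence by a fixed point on each $[\tau_i,\tau_{i+1}]$ where $\interleave\boldsymbol{\omega}\interleave_{p-var}=\frac{1}{4C_pC_g}$, concatenation, a discrete Gronwall across the $N$ intervals yielding the $e^{4LT}$ and $N$ factors, and Lemma \ref{Lemma 2.1} producing the $N^{(p-1)/p}$ factor. You also correctly flag the only delicate point (the drift on intervals that are long in time but small in rough-path norm), which is handled exactly as you describe.
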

The proof of this lemma  is similar to  \cite[Theorem 3.8]{duc2020asymptotic} and \cite{MR4385780}. %For  convenience, we give the proof of this Lemma in Appendix \ref{appendix B}.
Furthermore, for the approximated system \eqref{5.2} we have the following result.
\begin{lemma}\label{theorem 5.1}
Let $\eta=\frac{1}{4C_{p}C_{g}}$, Then there exists a unique solution $(y^\delta,g(y^\delta))$ for \eqref{5.2} with any initial data on the interval $[\tau,\tau+T],\tau\in R,T>0$, and have the following estimates
\begin{equation*}
\begin{aligned}
\|y^{\delta}\|_{\infty,[\tau,\tau+T]}\leq \left[\|y^{\delta}_{\tau}\|+\left(\frac{f(0)}{L}+\frac{1}{C_{p}}\right)N_{\frac{1}{4C_{p}C_{g}},[\tau,\tau+T],p}(\boldsymbol{\omega_{\delta}})\right]e^{4LT},
\end{aligned}
\end{equation*}
\begin{equation*}
\begin{aligned}
&\|y^{\delta}_{\tau}\|+\interleave y^{\delta},R^{y^{\delta}} \interleave_{p-var,[\tau,T]}\\
\leq&\left[\|y^{\delta}_{\tau}\|+\left(\frac{f(0)}{L}+\frac{1}{C_{p}}\right)N_{\frac{1}{4C_{p}C_{g}},[\tau,\tau+T],p}(\boldsymbol{\omega_{\delta}})\right]e^{4LT}N^{\frac{p-1}{p}}_{\frac{1}{4C_{p}C_{g}},[\tau,\tau+T],p}(\boldsymbol{\omega_{\delta}}),
\end{aligned}
\end{equation*}
where $L=\|A\|+C_{f}$ and $\interleave y^{\delta},R^{y^{\delta}}\interleave_{p-var,[s,t]}:=\interleave y^{\delta} \interleave_{p-var,[s,t]}+\interleave R^{y^{\delta}}\interleave_{q-var,[s,t]^{2}}.$
\end{lemma}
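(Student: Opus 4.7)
The plan is to apply the same machinery used for Lemma \ref{lemma 5.3} to the approximated system, after observing that $\boldsymbol{\omega}_\delta=(W_\delta(\cdot,\omega^1),\mathbb{W}_\delta(\omega^1))$ is a genuine geometric rough path (indeed a smooth one, by Lemma \ref{theorem 4.3}), so the structural hypotheses on the driver in Lemma \ref{lemma 5.3} are satisfied with $\boldsymbol{\omega}$ replaced by $\boldsymbol{\omega}_\delta$. Since $g\in C_b^3$ and $Au+f(u)$ is globally Lipschitz, all constants $C_p,C_g$ appearing in the a priori bounds are independent of the driver. Hence the only thing I need to track carefully is that every estimate re-derived with $\boldsymbol{\omega}_\delta$ gives rise to the analogous $N_{\eta,[\tau,\tau+T],p}(\boldsymbol{\omega}_\delta)$ counts.

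First I would set $\eta=\frac{1}{4C_pC_g}$ and, following the construction in \cite{MR3112937}, introduce the greedy stopping times $\tau_0=\tau,\tau_{i+1}=\inf\{t>\tau_i:\interleave\boldsymbol{\omega}_\delta\interleave_{p-var,[\tau_i,t]}=\eta\}\wedge(\tau+T)$, so that on each subinterval $[\tau_i,\tau_{i+1}]$ the driver has $p$-variation exactly $\eta$. On such a subinterval one sets up the solution map $(y,y')\mapsto(\hat y,\hat y')$ with $\hat y'=g(y)$ and
\begin{equation*}
\hat y_t=y_{\tau_i}+\int_{\tau_i}^t(Ay_r+f(y_r))\,dr+\int_{\tau_i}^t g(y_r)\,d\boldsymbol{\omega}_{\delta,r},
\end{equation*}
where the rough integral is controlled via the sewing bound \eqref{5.10} in its $p$-variation form. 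The drift piece has Gubinelli derivative $0$ (Remark \ref{Rem4.2}), so it contributes only to the remainder and is handled by standard Lipschitz estimates. The rough integral piece is estimated by \eqref{5.10} together with the composition lemma applied to $g(y)$, giving a bound whose leading constant is $C_pC_g\cdot\eta=\frac{1}{4}$, which produces a strict contraction on each short subinterval and yields local existence and uniqueness of $(y^\delta,g(y^\delta))\in\mathcal{D}^{2\beta}_{\boldsymbol{\omega}_\delta}([\tau_i,\tau_{i+1}],\mathbb{R}^m)$.

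Next, on each subinterval I would derive the one-step a priori bound. Using \eqref{5.10}, the Lipschitz bound $\|Ay+f(y)\|\le L\|y\|+f(0)$ with $L=\|A\|+C_f$, and the fact that $\interleave\boldsymbol{\omega}_\delta\interleave_{p-var,[\tau_i,\tau_{i+1}]}=\eta=\frac{1}{4C_pC_g}$, the standard Gronwall/telescoping argument (exactly as in \cite[Theorem 3.8]{duc2020asymptotic} and \cite{MR4385780}) produces a local growth bound of the form
\begin{equation*}
\|y^\delta_{\tau_{i+1}}\|\le\bigl(\|y^\delta_{\tau_i}\|+\tfrac{f(0)}{L}+\tfrac{1}{C_p}\bigr)e^{4L(\tau_{i+1}-\tau_i)}.
\end{equation*}
Iterating this bound over the $N:=N_{\frac{1}{4C_pC_g},[\tau,\tau+T],p}(\boldsymbol{\omega}_\delta)$ subintervals, one telescopes to the sup-norm estimate in the statement. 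Patching the local solutions across the finite family $\{[\tau_i,\tau_{i+1}]\}_{i=0}^{N-1}$ gives a unique global controlled rough path solution on $[\tau,\tau+T]$.

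Finally, for the $p$-variation estimate I would use Lemma \ref{Lemma 2.1} to combine the $N$ local $p$-variation bounds: since $\interleave y^\delta,R^{y^\delta}\interleave_{p-var,[\tau_i,\tau_{i+1}]}$ is controlled by the local sup-norm bound times $\eta$, summing $p$-th powers and applying $(\sum a_i^p)^{1/p}\le N^{(p-1)/p}\max_i a_i$ produces exactly the $N^{(p-1)/p}$ factor in the stated bound. The main obstacle is purely bookkeeping: keeping constants $C_p,C_g$ and the factor $e^{4L(\tau_{i+1}-\tau_i)}$ clean so that the telescoping at stage $N$ produces $e^{4LT}$ rather than an unwanted dependence on $N$; this is handled by noting that $\sum_i(\tau_{i+1}-\tau_i)=T$, so the exponential factors multiply to $e^{4LT}$ while the affine iteration produces the prefactor $\|y^\delta_\tau\|+(\tfrac{f(0)}{L}+\tfrac{1}{C_p})N$. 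No new analytical difficulty arises beyond Lemma \ref{lemma 5.3}; the proof is literally the same argument with $\boldsymbol{\omega}$ replaced by $\boldsymbol{\omega}_\delta$ throughout, which is legitimate precisely because Lemma \ref{theorem 4.3} guarantees $\boldsymbol{\omega}_\delta\in\mathcal{C}_g^{p-var}$.
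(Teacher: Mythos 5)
Your proposal is correct and follows exactly the route the paper intends: the paper gives no separate proof of this lemma, relying on the observation that the argument for Lemma \ref{lemma 5.3} (the greedy stopping-time construction of \cite{MR3112937} combined with the sewing estimate \eqref{5.10} and the scheme of \cite[Theorem 3.8]{duc2020asymptotic} and \cite{MR4385780}) applies verbatim with $\boldsymbol{\omega}$ replaced by $\boldsymbol{\omega}_{\delta}$, which is legitimate since $\boldsymbol{\omega}_{\delta}$ is a (smooth, geometric) rough path by Lemma \ref{theorem 4.3}. Your bookkeeping of the $N^{(p-1)/p}$ factor via Lemma \ref{Lemma 2.1} and of the exponential factors via $\sum_i(\tau_{i+1}-\tau_i)=T$ matches the stated bounds.
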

\begin{remark}
Note that the norm of $y^{\prime}=g(y)$ and $(y^{\delta})^{\prime}=g(y^{\delta})$  can be estimated by the norm of $y$ and $y^{\delta}$ respectively. Thus, we don't consider the semi-norm $\|y,y^{\prime}\|_{p-var,[s,t]}=\|y^{\prime}\|_{p-var,[s,t]}+\|R^{y}\|_{q-var,[s,t]^{2}}$. In addition, by non-uniqueness of the Gubinelli derivative for system \eqref{5.2}, we could let $(y^{\delta})^{\prime}=0$, this means that $y^{\delta}_{s,t}=R^{y^{\delta}}_{s,t}$, but it brings some additional problems for the approximation of the solution.
\end{remark}
\subsection{Smooth and stationary Wong-Zakai approximation for the solution}
In this subsection we shall use the smooth and stationary  Wong-Zakai approximation  for a geometric fractional rough path in Section \ref{approxiamtion of  the noise} to complete the approximation of the solution. Based on the rough path theory, we  get the approximation of the solution in some local intervals,  and the stopping times technique help us  complete the approximation  of the solution in any finite interval.
\begin{theorem}\label{theorem 5.2}
Let $y$ be the solution of $\eqref{5.1}$  and $y^{\delta}$ be the solution of \eqref{5.2} for $\delta\in (0,1)$. For any $\tau\in\mathbb{R}$ and $T>0$. if $\|y^{\delta}_{\tau}-y_{\tau}\|\rightarrow 0$ for $\delta\rightarrow 0$, then we have
\begin{equation}
\|y-y^{\delta}\|_{\infty,[\tau,\tau+T]}+\interleave y-y^{\delta}\interleave_{p-var,[\tau,\tau+T]}+\interleave R^{y^{\delta}}- R^{y}\interleave_{q-var,[\tau,\tau+T]^{2}}\rightarrow 0,
\end{equation}
where $q=\frac{p}{2}$.
\end{theorem}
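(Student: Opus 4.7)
The plan is a stopping-time localization combined with a Gronwall-type concatenation across the resulting sub-intervals. First I would note that by Theorem \ref{theorem 4.6}, on a $\theta$-invariant set of full measure we have $\rho_{\beta,[-T,T]}(\boldsymbol{\omega}_\delta,\boldsymbol{\omega})\to 0$, and via the embedding $\mathcal{C}^\beta\subset \mathcal{C}^{p-var}$ with $p=1/\beta$ the analogous $p$-variation convergence holds. In particular $\interleave\boldsymbol{\omega}_\delta\interleave_{p-var,[\tau,\tau+T]}$ is uniformly bounded in $\delta\in(0,\delta_0]$, and so, with $\eta=\frac{1}{4C_pC_g}$ as in Lemma \ref{lemma 5.3} and Lemma \ref{theorem 5.1}, a finite partition $\tau=\tau_0<\tau_1<\cdots<\tau_N=\tau+T$ can be chosen (uniformly in $\delta\in(0,\delta_0]$) so that $\interleave\boldsymbol{\omega}\interleave_{p-var,[\tau_i,\tau_{i+1}]}\vee \interleave\boldsymbol{\omega}_\delta\interleave_{p-var,[\tau_i,\tau_{i+1}]}\le \eta$. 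The two lemmas then yield $\delta$-uniform bounds on $\|y\|_{\infty}$, $\|y^\delta\|_{\infty}$, $\|y,g(y)\|_{\omega,p-var}$ and $\|y^\delta,g(y^\delta)\|_{\omega_\delta,p-var}$ on each piece.

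On a generic piece $[\tau_i,\tau_{i+1}]$, I would write
\begin{align*}
y_t-y^\delta_t &= (y_{\tau_i}-y^\delta_{\tau_i})+\int_{\tau_i}^t\bigl[A(y_r-y^\delta_r)+(f(y_r)-f(y^\delta_r))\bigr]dr\\
&\quad +\int_{\tau_i}^t g(y_r)\,d(\boldsymbol{\omega}-\boldsymbol{\omega}_\delta)_r + \int_{\tau_i}^t (g(y_r)-g(y^\delta_r))\,d\boldsymbol{\omega}_{\delta,r}.
\end{align*}
For the drift I would use H1 with the Lipschitz constant $L=\|A\|+C_f$. For the two rough integrals I apply the $p$-variation Sewing estimate \eqref{5.10}: the first is bounded by a constant times $\rho_{p-var,[\tau_i,\tau_{i+1}]}(\boldsymbol{\omega},\boldsymbol{\omega}_\delta)$ times the controlled-path norm of $(g(y),Dg(y)g(y))$, while the second is bounded by a constant times $\interleave\boldsymbol{\omega}_\delta\interleave_{p-var,[\tau_i,\tau_{i+1}]}\le\eta$ times the controlled-path norm of the difference $(g(y)-g(y^\delta),\,Dg(y)g(y)-Dg(y^\delta)g(y^\delta))$. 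The latter is in turn dominated, via Taylor expansion of $g$, $Dg$ and the uniform a priori bounds, by a constant multiple of $\|y-y^\delta,R^y-R^{y^\delta}\|_{p-var,[\tau_i,\tau_{i+1}]}$.

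Because $\eta$ has been chosen small enough, the $\interleave\boldsymbol{\omega}_\delta\interleave_{p-var}$-factor can be absorbed into the left-hand side after taking the $\|\cdot\|_{\infty,[\tau_i,\tau_{i+1}]}+\interleave\cdot,\cdot\interleave_{p-var,[\tau_i,\tau_{i+1}]}$ norms, yielding a one-step estimate of the shape
\begin{equation*}
\|y-y^\delta\|_{\infty,[\tau_i,\tau_{i+1}]}+\interleave y-y^\delta,R^y-R^{y^\delta}\interleave_{p-var,[\tau_i,\tau_{i+1}]}\le K\bigl(\|y_{\tau_i}-y^\delta_{\tau_i}\|+\varepsilon_\delta\bigr),
\end{equation*}
with $K$ a $\delta$-independent constant and $\varepsilon_\delta:=\rho_{p-var,[\tau,\tau+T]}(\boldsymbol{\omega},\boldsymbol{\omega}_\delta)\to 0$. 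Iterating from $i=0$ with initial error $\|y_\tau-y^\delta_\tau\|\to 0$ through the $N$ sub-intervals, and reassembling the global $p$-variation norm via Lemma \ref{Lemma 2.1}, produces the desired convergence on $[\tau,\tau+T]$.

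The main obstacle I expect is the careful bookkeeping of the controlled-rough-path norm of the difference $(g(y)-g(y^\delta),\,Dg(y)g(y)-Dg(y^\delta)g(y^\delta))$: since $f$ is only Lipschitz and not differentiable, the drift cannot be absorbed into the driving path as in Friz--Victoir and must be carried inside the remainder $R^y$ (cf. Remark \ref{Rem4.2}), so that $R^y-R^{y^\delta}$ mixes a Young-type drift contribution with the rough-integral contribution and these must be disentangled at each step. A secondary technical point is that the stopping times of $\boldsymbol{\omega}$ and $\boldsymbol{\omega}_\delta$ a priori differ; this is why a common partition, valid simultaneously for both for all sufficiently small $\delta$, is extracted from the $p$-variation convergence provided by Theorem \ref{theorem 4.6}.
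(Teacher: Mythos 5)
Your proposal is correct and follows essentially the same route as the paper: pass to integral form, obtain a local stability estimate for the rough integrals from the sewing lemma (the paper packages this as Theorem \ref{theorem 7.1}, expanding both integrals around their germs $\Xi,\Xi^{\delta}$ and bounding $\mathcal{I}(\triangle)-\triangle$, rather than your two-integral splitting, but the resulting bound has the same structure), absorb the solution-difference terms on sub-intervals cut by stopping times whose number is uniformly bounded in $\delta$, then apply Gr\"{o}nwall and concatenate via Lemma \ref{Lemma 2.1}. The one refinement to note is that the absorption threshold must be taken proportional to $\bigl(\interleave y\interleave_{p-var}+\interleave y^{\delta}\interleave_{p-var}+\interleave R^{y}\interleave_{q-var}+1\bigr)^{-1}$, i.e.\ strictly finer than the $\eta=\frac{1}{4C_{p}C_{g}}$ of the existence lemmas, which is exactly the role of the paper's second stopping-time sequence $\tilde{\tau}^{\delta}_{i}$ with threshold $\tilde{\eta}^{\delta}$.
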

\begin{proof}
For any $s<t\in[\tau,\tau+T]$, let us transform \eqref{5.1} and \eqref{5.2} into integral form
\begin{equation}
y_{s,t}=\int_{s}^{t}Ay_{r}+f(y_{r})dr+\int_{s}^{t}g(y_{r})d\boldsymbol{\omega}_{r}
\end{equation}
and
\begin{equation}
y^{\delta}_{s,t}=\int_{s}^{t}Ay^{\delta}_{r}+f(y^{\delta}_{r})dr+\int_{s}^{t}g(y^{\delta}_{r})d\boldsymbol{\omega}_{\delta,r}.
\end{equation}
Then we have
\begin{equation}\label{5.18}
\begin{aligned}
\|y_{t}-y^{\delta}_{t}-y_{s}+y^{\delta}_{s}\|&=\|y_{s,t}-y^{\delta}_{s,t}\|\\
&\leq \left\|\int_{s}^{t}A(y_{r}-y_{r}^{\delta})+f(y_{r})-f(y_{r}^{\delta})dr\right\|\\
&~~~~+\left\|\int_{s}^{t}g(y_{r})d\boldsymbol{\omega}_{r}-\int_{s}^{t}g(y_{r}^{\delta})d\boldsymbol{\omega}_{\delta,r}\right\|\\
&\leq \int_{s}^{t}L\|y_{r}-y_{r}^{\delta}\|dr+\left\|\int_{s}^{t}g(y_{r})d\omega^{1}_{r}-\int_{s}^{t}g(y_{r}^{\delta})d\boldsymbol{\omega}_{\delta,r}\right\|\\
&=\int_{s}^{t}L\|y_{r}-y_{r}^{\delta}\|dr+\|Z_{s,t}-Z^\delta_{s,t}\|,
\end{aligned}
\end{equation}
where $Z_{s,t}:=\int_{s}^{t}g(y_{r})d\boldsymbol{\omega}_{r}$ and $Z^{\delta}_{s,t}:=\int_{s}^{t}g(y_{r}^{\delta})d\boldsymbol{\omega}_{\delta,r}$.
By \eqref{5.18}, Theorem \ref{theorem 7.1} in the Appendix, we obtain
\begin{small}
	\begin{align}\label{5.39}
&\interleave \!y\!-\!y^{\delta}\interleave_{p-var,[s,t]}\leq \int_{s}^{t}L\|y_{r}-y_{r}^{\delta}\|dr+\!\!15C_{p}(C_{g}^{2}\interleave\! \boldsymbol{\omega}\!\interleave^{2}_{p-var,[s,t]}\!\vee C_{g}\interleave\! \boldsymbol{\omega}\interleave_{p-var,[s,t]})\nonumber\\
&~~~~~\times\left(\interleave y^{\delta}\!\interleave_{p-var,[s,t]}\!+\!\interleave y\interleave_{p-var,[s,t]}+\interleave R^{y}\interleave_{q-var,[s,t]^{2}}+1\right)\nonumber\\
&~~~~~\times\left(\interleave y-y^{\delta}\interleave_{p-var,[s,t]}+\|y-y^{\delta}\|_{\infty,[s,t]}+\interleave R^{y}-R^{y^{\delta}}\interleave_{q-var,[s,t]^{2}}\right)\nonumber\\
&~~~~~+\left(\interleave y^{\delta}\interleave_{p-var,[s,t]}(\interleave W_{\delta}(\cdot,\omega^{1})\interleave_{p-var,[s,t]}\!+\!\interleave\omega^{1}\interleave_{p-var,[s,t]})\right.\nonumber\\
&~~~~~\left.+\!\interleave R^{y^{\delta}}\interleave_{q-var,[s,t]^{2}}+1\right)\!C_{g}^{2}\vee C_{g}\interleave \omega^{1}\!-\!W_{\delta}(\cdot,\omega^{1})\interleave_{p-var,[s,t]}\nonumber\\
&~~~~~+2C_{g}^{2}C_{p}\left[\interleave y^{\delta} \interleave_{p-var,[s,t]}+1\right]\interleave\omega^{2}-\mathbb{W}_{\delta}(\omega^{1})\interleave_{q-var,[s,t]^{2}}.
\end{align}\end{small}
Since $\|y-y^{\delta}\|_{\infty,[s,t]}\leq \|y_{s}-y^{\delta}_{s}\|+\interleave y-y^{\delta}\interleave_{p-var,[s,t]}$, and by $R^{y}=y_{s,t}-g(y_{s})\omega^{1}_{s,t}, R^{y^{\delta}}=y^{\delta}_{s,t}-g(y^{\delta}_{s})W_{\delta}(\cdot,\omega^{1})_{s,t}$, \eqref{5.21}, we get
 an estimate for $\|y-y^{\delta}\|_{\infty,[s,t]}$ and $\interleave R^y-R^{y^\delta}\interleave_{q-var,[s,t]^2}$. Furthermore, we have
\begin{small}
\begin{align}\label{5.42}
&\interleave \!y\!-\!y^{\delta}\!\interleave_{p-var,[s,t]}\!+\!\| y\!-\!y^{\delta}\|_{\infty,[s,t]}+ \interleave \!R^{y}\!-\!R^{y^{\delta}}\interleave_{q-var,[s,t]^{2}}\leq  \int_{s}^{t}3L\|y_{r}-y_{r}^{\delta}\|dr\nonumber\\
&~~~~+ 46C_{p}(C_{g}^{2}\interleave \boldsymbol{\omega}\interleave^{2}_{p-var,[s,t]}\vee C_{g}\interleave \boldsymbol{\omega}\interleave_{p-var,[s,t]})\left(\interleave y^{\delta}\interleave_{p-var,[s,t]}\right.\nonumber\\
&~~~~\left.+\interleave y\interleave_{p-var,[s,t]}+\interleave R^{y}\interleave_{q-var,[s,t]^{2}}+1\right)\left(\interleave y-y^{\delta}\interleave_{p-var,[s,t]}\right.\nonumber\\
&~~~~\left.+\|y-y^{\delta}\|_{\infty,[s,t]}+\interleave R^{y}-R^{y^{\delta}}\interleave_{q-var,[s,t]^{2}}\right)+\|y_{s}-y^{\delta}_{s}\|\nonumber\\
&~~~~+\left(\interleave y^{\delta}\interleave_{p-var,[s,t]}(\interleave W_{\delta}(\cdot,\omega^{1})\interleave_{p-var,[s,t]}\!+\!\interleave\omega^{1}\interleave_{p-var,[s,t]})\nonumber\right.\\
&~~~~\left.+\interleave R^{y^{\delta}}\interleave_{q-var,[s,t]^{2}}+1\right)4(C_{g}^{2}\vee C_{g})\interleave \omega^{1}\!-\!W_{\delta}(\cdot,\omega^{1})\interleave_{p-var,[s,t]}\nonumber\\
&~~~~+6C_{g}^{2}C_{p}\left[\interleave y^{\delta} \interleave_{p-var,[s,t]}+1\right]\interleave\omega^{2}-\mathbb{W}_{\delta}(\omega^{1})\interleave_{q-var,[s,t]^{2}}.
\end{align}
\end{small}
Based on the estimates of the solution on a finite interval, we consider another sequence of stopping times. For  $\delta\in (0,1]$, we choose
 $$\tilde{\eta}^{\delta}= \frac{1}{92C_{p}C_{g}[\interleave y^{\delta}\interleave_{p-var,[\tau,\tau+T]}+\interleave y\interleave_{p-var,[\tau,\tau+T]}+\interleave R^{y}\interleave_{q-var,[\tau,\tau+T]^{2}}+1]}.$$
Let
$$\tilde{\tau}^{\delta}_{0}=\tau,\tilde{\tau}^{\delta}_{i+1}:=\inf\{t>\tilde{\tau}^\delta_{i};\interleave\boldsymbol{\omega}\interleave_{p-var,[\tilde{\tau}^{\delta}_{i},t]}=\tilde{\eta}^{\delta}\}\wedge(\tau+T)$$
such that $46C_{p}C_{g}\interleave \boldsymbol{\omega}\interleave_{p-var,[\tilde{\tau}_{i},\tilde{\tau}_{i+1}]}[\interleave y^{\delta}\interleave_{p-var,[\tau,\tau+T]}+\interleave y\interleave_{p-var,[\tau,\tau+T]}+\\ \interleave R^{y}\interleave_{q-var,[\tau,\tau+T]^{2}}+1]\leq \frac{1}{2}$ which implies $C_{g}^{2}\interleave\boldsymbol{\omega}\interleave_{p-var,[s,t]}^2\leq C_{g}\interleave\boldsymbol{\omega}\interleave_{p-var,[s,t]}$. Furthermore,  Lemma \ref{theorem 5.1} and Lemma \ref{lemma 5.3} show  that $\interleave y,R^y\interleave_{p-var,[\tau,\tau+T]}$ and $\interleave y^{\delta}\interleave_{p-var,[\tau,\tau+T]}$ can be estimated by the number of stopping times for $\boldsymbol{\omega}$ and  $\boldsymbol{\omega}_{\delta}$,  in addition,  Theorem \ref{theorem 7.2} tells us  that  the sequence of stopping times $\{\tau^\delta_{i}\}$ converging to $\{\tau_i\}$, then the number of stopping times also converges. Thus, the  number  of stopping times for $\boldsymbol{\omega}$ and  $\boldsymbol{\omega}_{\delta}$  are bounded for any $T>0$, then  we have $\inf_{\delta\in(0,1]}\tilde{\eta}^{\delta}>0$.

Thus, for any $t\in [\tilde{\tau}^{\delta}_{i},\tilde{\tau}^{\delta}_{i+1}]$, we have
 \begin{align}
&\interleave y-y^{\delta}\interleave_{p-var,[\tilde{\tau}^{\delta}_{i},t]}+\| y-y^{\delta}\|_{\infty,[\tilde{\tau}^{\delta}_{i},t]}+ \interleave R^{y}-R^{y^{\delta}}\interleave_{q-var,[\tilde{\tau}^{\delta}_{i},t]^{2}}\nonumber\\
&\leq 6L \int_{\tilde{\tau}^{\delta}_{i}}^{t}\interleave y-y^{\delta}\interleave_{p-var,[\tilde{\tau}^{\delta}_{i},r]}+\| y-y^{\delta}\|_{\infty,[\tilde{\tau}^{\delta}_{i},r]}+\interleave R^{y}-R^{y^{\delta}}\interleave_{p-var,[\tilde{\tau}^{\delta}_{i},r]^{2}}dr\nonumber\\
&+2K_{\triangle_{[\tilde{\tau}^{\delta}_{i},\tilde{\tau}^{\delta}_{i+1}]}}(\interleave \omega^{1}-W_{\delta}(\cdot,\omega^{1})\interleave_{p-var,[\tilde{\tau}^{\delta}_{i},\tilde{\tau}^{\delta}_{i+1}]},\interleave \omega^{2}-\mathbb{W}_{\delta}(\omega^{1})\interleave_{q-var,[\tilde{\tau}^{\delta}_{i},\tilde{\tau}^{\delta}_{i+1}]^{2}})\nonumber\\
&+\|y_{\tilde{\tau}^{\delta}_{i}}-y^{\delta}_{\tilde{\tau}^{\delta}_{i}}\|(2+6L(\tilde{\tau}^{\delta}_{i+1}-\tilde{\tau}^{\delta}_{i})),
\end{align}
 where we denote by
\begin{align}
 &K_{\triangle_{[\tilde{\tau}^{\delta}_{i},\tilde{\tau}^{\delta}_{i+1}]}}=K_{\triangle_{[\tilde{\tau}^{\delta}_{i},\tilde{\tau}^{\delta}_{i+1}]}}(\interleave \omega^{1}-W_{\delta}(\cdot,\omega^{1})\interleave_{p-var,[\tilde{\tau}^{\delta}_{i},\tilde{\tau}^{\delta}_{i+1}]},\interleave \omega^{2}-\mathbb{W}_{\delta}(\omega^{1})\interleave_{q-var,[\tilde{\tau}_{i},\tilde{\tau}_{i+1}]})\nonumber\\
 &=\left(\interleave y^{\delta}\interleave_{p-var,[\tilde{\tau}^{\delta}_{i},\tilde{\tau}^{\delta}_{i+1}]}(\interleave W_{\delta}(\cdot,\omega^{1})\interleave_{p-var,[\tilde{\tau}^{\delta}_{i},\tilde{\tau}^{\delta}_{i+1}]}+\interleave\omega^{1}\interleave_{p-var,[\tilde{\tau}^{\delta}_{i},\tilde{\tau}^{\delta}_{i+1}]})\nonumber\right.\\
&\left.+\interleave R^{y^{\delta}}\interleave_{q-var,[\tilde{\tau}^{\delta}_{i},\tilde{\tau}^{\delta}_{i+1}]^{2}}+1\right)4(C_{g}^{2}\!\vee\! C_{g})\interleave\! \omega^{1}\!-\!W_{\delta}(\cdot,\omega^{1})\!\interleave_{p-var,[\tilde{\tau}^{\delta}_{i},\tilde{\tau}^{\delta}_{i+1}]}\nonumber\\
&+6C_{g}^{2}C_{p}\left[\interleave y^{\delta} \interleave_{p-var,[\tilde{\tau}^{\delta}_{i},\tilde{\tau}^{\delta}_{i+1}]}+1\right]\interleave\omega^{2}-\mathbb{W}_{\delta}(\omega^{1})\interleave_{q-var,[\tilde{\tau}^{\delta}_{i},\tilde{\tau}^{\delta}_{i+1}]^{2}}.
\end{align}
Note that Lemma \ref{theorem 5.1}, Theorem \ref{theorem 7.1}, the estimates of the number of stopping times ensure that $\sup_{\delta\in(0,1]}(\interleave y^{\delta}\interleave_{p-var,[\tau,\tau+T]}+\interleave R^{y^{\delta}}\interleave_{q-var,[\tau,\tau+T]^{2}})<\infty$ and the term $K_{\triangle_{[\tilde{\tau}^{\delta}_{i},\tilde{\tau}^{\delta}_{i+1}]}}(\cdot)$ converge to zero as $\delta\rightarrow 0$ on each interval $[\tilde{\tau}^\delta_{i},\tilde{\tau}^\delta_{i+1}]$. Thus, by the continuous Gr\"{o}nwall's inequality, we  obtain
\begin{align}\label{4.16**}
&\interleave y-y^{\delta}\interleave_{p-var,[\tilde{\tau}^{\delta}_{i},\tilde{\tau}^{\delta}_{i+1}]}+\| y-y^{\delta}\|_{\infty,[\tilde{\tau}^{\delta}_{i},\tilde{\tau}^{\delta}_{i+1}]}+ \interleave R^{y}-R^{y^{\delta}}\interleave_{q-var,[\tilde{\tau}^{\delta}_{i},\tilde{\tau}^{\delta}_{i+1}]^{2}}\nonumber\\
&~~~~\leq C_{L,T}\|y_{\tilde{\tau}^{\delta}_{i}}-y_{\tilde{\tau}^{\delta}_{i}}\|e^{6L(\tilde{\tau}^{\delta}_{i+1}-\tilde{\tau}^{\delta}_{i})}+C_{L,T}e^{6L(\tilde{\tau}^{\delta}_{i+1}-\tilde{\tau}^{\delta}_{i})}K_{\triangle_{[\tilde{\tau}^{\delta}_{i},\tilde{\tau}^{\delta}_{i+1}]}},
\end{align}
where $C_{L,T}$ is a constant which only depends on $L$ and $T$. We now are in a position to  consider $\|y-y^{\delta}\|_{\infty,[\tau,\tau+T]}$. Using the above inequality \eqref{4.16**} we get
\begin{small}
\begin{align}
&\|y_{\tilde{\tau}^{\delta}_{i+1}}-y^{\delta}_{\tilde{\tau}^{\delta}_{i+1}}\|\leq \|y-y^{\delta}\|_{\infty,[\tilde{\tau}^{\delta}_{i},\tilde{\tau}^{\delta}_{i+1}]}\nonumber\\
&~~~~\leq C_{L,T}\|y_{\tilde{\tau}^{\delta}_{i}}-y_{\tilde{\tau}^{\delta}_{i}}\|e^{6L(\tilde{\tau}^{\delta}_{i+1}-\tilde{\tau}^{\delta}_{i})}+C_{L,T}e^{6L(\tilde{\tau}^{\delta}_{i+1}-\tilde{\tau}^{\delta}_{i})}K_{\triangle_{[\tilde{\tau}^{\delta}_{i},\tilde{\tau}^{\delta}_{i+1}]}}\nonumber\\
&~~~~\leq C_{L,T}\|y-y^{\delta}\|_{\infty,[\tilde{\tau}^{\delta}_{i-1},\tilde{\tau}^{\delta}_{i}]}e^{6L(\tilde{\tau}^{\delta}_{i+1}-\tilde{\tau}^{\delta}_{i})}+C_{L,T}e^{6L(\tilde{\tau}^{\delta}_{i+1}-\tilde{\tau}^{\delta}_{i})}K_{\triangle_{[\tilde{\tau}^{\delta}_{i},\tilde{\tau}^{\delta}_{i+1}]}}\nonumber\\
&~~~~\leq C_{L,T}^{i+1}\|y_{\tau}-y_{\tau}^{\delta}\|e^{6L(\tilde{\tau}^{\delta}_{i+1}-\tau)}+\sum_{j=0}^{i}C_{L,T}^{j+1}e^{6L(\tilde{\tau}^{\delta}_{i+1}-\tilde{\tau}^{\delta}_{i-j})}K_{\triangle_{[\tilde{\tau}^{\delta}_{i-j},\tilde{\tau}^{\delta}_{i+1-j}]}}.
\end{align}
\end{small}
Then we have
\begin{small}
\begin{align*}
\|y-y^{\delta}\|_{\infty,[\tilde{\tau}^{\delta}_{i},\tilde{\tau}^{\delta}_{i+1}]}\leq C_{L,T}^{\tilde{N}^\delta}\|y_{\tau}-y_{\tau}^{\delta}\|e^{6LT}+\sum_{j=0}^{\tilde{N}^\delta-1}C_{L,T}^{j+1}e^{6L(\tilde{\tau}^{\delta}_{i+1}-\tilde{\tau}^{\delta}_{i-j})}K_{\triangle_{[\tilde{\tau}^{\delta}_{i-j},\tilde{\tau}^{\delta}_{i+1-j}]}},
\end{align*}
\end{small}
where the $\tilde{N}^\delta$ is the number of stopping times $\{\tilde{\tau}^{\delta}_{i}\}$, and by Lemma \ref{Lemma 2.1} $\tilde{N}^\delta\leq (\tilde{\eta}^{\delta})^{-p}\interleave \boldsymbol{\omega}\interleave_{p-var,[\tau,\tau+T]}+1$ . Thus,
$$\|y-y^{\delta}\|_{\infty,[\tau,\tau+T]}\rightarrow 0,\quad \delta\rightarrow 0.$$
For $\interleave y-y^{\delta}\interleave_{p-var,[\tau,\tau+T]}$, by Lemma \ref{Lemma 2.1} we obtain
\begin{align}
\interleave y-y^{\delta}\interleave_{p-var,[\tau,\tau+T]}\leq (\tilde{N}^\delta-1)^{\frac{p-1}{p}}\sum_{i=0}^{\tilde{N}^\delta-1}\interleave y-y^{\delta}\interleave_{p-var,[\tilde{\tau}_{i},\tilde{\tau}_{i+1}]}.
\end{align}
 Thus, we can get
 $$\interleave y-y^{\delta}\interleave_{p-var,[\tau,\tau+T]}\rightarrow 0, \quad \delta\rightarrow 0.$$
  Similarly, we also have
   $$\interleave R^{y}-R^{y^{\delta}}\interleave_{p-var,[\tau,\tau+T]}\rightarrow 0, \quad \delta\rightarrow 0.$$
The proof is complete.
\end{proof}
\subsection{Random dynamical systems}
In this subsection, we consider  random dynamical systems which are generated by \eqref{5.1} and \eqref{5.2}. For the  theory of random dynamical systems, we refer to \cite{MR1723992}.  We first to construct an ergodic metric dynamical system.

 %Choosing  $\beta^{\prime}$-H\"{o}lder index in remark \ref{remark 2.1} and $\beta<\beta^{\prime}$, namely, $\omega\in C^{\beta^{\prime}}$ for $\omega\in\Omega$.
 For all $\frac{1}{3}<\beta<H<\frac{1}{2}$ and $\omega\in  \Omega$ in remark \ref{remark 2.1}. According to Theorem \ref{theorem 4.6}, there exists a $\theta$-invariant set $\Omega^{\prime}$ such that any $\omega\in\Omega^{\prime}$, the fractional Brownian motion has a canonical lift, namely, the fractional Brownian  rough path, which  we considered in Section \ref{section-intersection}, can be treated as the limit of the canonical lift of smooth path $W_{\delta}(\cdot,\omega^{1})$ and its second order process $\theta_{\tau}\mathbb{W}_{\delta}(\omega^{1})$ is a canonical lift of $\theta_{\tau}W_{\delta}(\cdot,\omega^{1}),\tau\in R,\omega^{1}\in \Omega^{\prime}$.  Thus we restrict the ergodic metric dynamical system given by $(\Omega,\mathcal{F},\mathbb{P}_{H},\theta)$ in Remark \ref{remark 2.1} to $(\Omega^{\prime},\mathcal{F}^{\prime},\mathbb{P}^{\prime}_{H},\theta^{\prime})$, where $\mathcal{F}^{\prime}:=\mathcal{F}\bigcap \Omega^{\prime}$, $\mathbb{P}^{\prime}_{H}$ is  the restriction of $\mathbb{P}_H$ over $\mathcal{F}^\prime$, $\theta^\prime$ denotes the restriction of $\theta$ to $R \times \Omega^ \prime$. Then the metric dynamical system $(\Omega^{\prime},\mathcal{F}^{\prime},\mathbb{P}^{\prime}_{H},\theta^{\prime})$ is  ergodic.
\begin{remark}
As mentioned in the previous sections, we do not need to regard geometric fractional Brownian rough path as a new stochastic process\cite{MR3624539}, in fact the second order process is generated by the path of a fractional Brownian motion, then $\sigma$-algebra $\mathcal{F}^{\prime}$ should be generated by the path not fractional Brownian rough path.
\end{remark}
\begin{theorem}\label{theorem 5.3}
The rough differential equation \eqref{5.1} generates a random dynamical system $\varphi:R^{+}\times \Omega^{\prime}\times R^{m}\mapsto R^{m}$ given by $\varphi(t,\omega,\xi)=y_{t}$ over $(\Omega^{\prime},\mathcal{F}^{\prime},\mathbb{P}^{\prime}_{H},\theta^{\prime})$ and $t\in[0,T]$.
\end{theorem}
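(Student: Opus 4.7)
The plan is to verify the three defining properties of a random dynamical system: the measurability of $(t,\omega,\xi)\mapsto \varphi(t,\omega,\xi)$, the initial condition $\varphi(0,\omega,\cdot)=\mathrm{Id}$, and the cocycle property. The initial condition is immediate from the integral formulation of \eqref{5.1}. For measurability, my approach is to exploit the Wong--Zakai approximation already established: for each $\delta\in(0,1]$, equation \eqref{5.2} is a non-autonomous ODE with smooth coefficients, so the associated map $\varphi^\delta(t,\omega,\xi)=y^\delta_t$ is jointly measurable by standard Carath\'eodory theory. Theorem \ref{theorem 5.2} together with Theorem \ref{theorem 4.6} gives $\varphi^\delta(t,\omega,\xi)\to\varphi(t,\omega,\xi)$ pointwise on $\Omega^\prime$ (for the first-component convergence, the $\|\cdot\|_{\infty}$-bound is enough to identify the limit at each $t$). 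A pointwise limit of jointly measurable maps is jointly measurable, which yields the measurability requirement.

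For the cocycle property, I would use uniqueness of the controlled rough path solution guaranteed by Lemma \ref{lemma 5.3}, combined with the $\theta$-invariance of the rough path $\boldsymbol{\omega}$ established in Theorem \ref{theorem 4.6}. Fix $t_1,t_2\geq 0$, $\omega\in\Omega^\prime$, $\xi\in\mathbb{R}^m$, and let $y$ be the unique solution of \eqref{5.1} on $[0,t_1+t_2]$ with $y_0=\xi$. Define $z_s:=y_{s+t_1}$ for $s\in[0,t_2]$. The key step is to show that $(z,g(z))\in\mathcal{D}^{2\beta}_{\theta_{t_1}\omega^1}([0,t_2];\mathbb{R}^m)$ and satisfies
\begin{equation*}
z_s = y_{t_1}+\int_0^s (Az_r+f(z_r))\,dr+\int_0^s g(z_r)\,d(\theta_{t_1}\boldsymbol{\omega})_r.
\end{equation*}
The drift part is an immediate change of variable. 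For the rough integral, one uses the fact that the shift acts compatibly on controlled paths and the translated rough path $\theta_{t_1}\boldsymbol{\omega}=(\theta_{t_1}\omega^1,\theta_{t_1}\omega^2)$ satisfies $\theta_{t_1}\omega^2_{s,t}=\omega^2_{s+t_1,t+t_1}$. Combined with the definition of the rough integral via Riemann--Gubinelli sums, this gives
\begin{equation*}
\int_{t_1}^{t_1+s} g(y_r)\,d\boldsymbol{\omega}_r=\int_0^s g(z_r)\,d(\theta_{t_1}\boldsymbol{\omega})_r.
\end{equation*}
By uniqueness applied on $[0,t_2]$ with driver $\theta_{t_1}\boldsymbol{\omega}$ and initial datum $y_{t_1}=\varphi(t_1,\omega,\xi)$, it follows that $z_s=\varphi(s,\theta_{t_1}\omega,\varphi(t_1,\omega,\xi))$; evaluating at $s=t_2$ yields the cocycle identity.

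The main obstacle will be the rigorous verification of the identity for the rough integral under the time shift. Although the statement is intuitively clear since $\theta_{t_1}\boldsymbol{\omega}$ is a bona-fide rough path lift of the shifted path and rough integrals only depend on the local increments of $\boldsymbol{\omega}$, care is required because the second-order process $\omega^2$ was constructed on $[-T,T]$ via Chen's relation in Theorem \ref{theorem 4.6} and one must check that the natural candidate Gubinelli derivative $g(z)$ really controls $z$ with respect to $\theta_{t_1}\omega^1$ and that the Sewing-Lemma construction of the integral is invariant under this shift. Once this compatibility is established, all pieces--measurability via the $\delta\to 0$ limit, the trivial initial condition, and the cocycle identity via uniqueness--assemble to show that $\varphi$ is a random dynamical system over $(\Omega^\prime,\mathcal{F}^\prime,\mathbb{P}^\prime_H,\theta^\prime)$.
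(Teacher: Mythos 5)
Your proposal is correct, and the cocycle part follows essentially the same route as the paper: both arguments reduce to the shift identity $\int_{t_1}^{t_1+s}g(y_r)\,d\boldsymbol{\omega}_r=\int_0^s g(y_{r+t_1})\,d(\theta_{t_1}\boldsymbol{\omega})_r$ for the rough integral (which the paper obtains from the Sewing Lemma, together with additivity of the integral) and then identify $y_{t_1+\cdot}$ with the solution started from $y_{t_1}$ and driven by $\theta_{t_1}\boldsymbol{\omega}$; your explicit invocation of uniqueness from Lemma \ref{lemma 5.3} just makes the last step of the paper's computation more transparent. Where you genuinely diverge is the measurability argument. The paper establishes continuity of $\xi,\omega\mapsto\varphi(t,\omega,\xi)$ by rerunning the stability estimate of Theorem \ref{theorem 5.2}, and then upgrades separate continuity in $t$ and measurability in $(\omega,\xi)$ to joint measurability via the Carath\'eodory-function lemma of Castaing--Valadier. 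You instead observe that each $\varphi^{\delta}$ is jointly measurable as the solution map of a random non-autonomous ODE with Carath\'eodory coefficients, and that Theorems \ref{theorem 4.6} and \ref{theorem 5.2} (applied with $y^{\delta}_{\tau}=y_{\tau}=\xi$, so the hypothesis $\|y^{\delta}_{\tau}-y_{\tau}\|\to 0$ is trivially met) give $\varphi^{\delta}\to\varphi$ pointwise on $R^{+}\times\Omega^{\prime}\times R^{m}$ along $\delta=1/n$; joint measurability of the limit follows. Your route buys a cleaner argument that sidesteps the (somewhat delicate) question of which topology on $\Omega^{\prime}$ the continuity in $\omega$ refers to, at the cost of leaning on the full strength of the Wong--Zakai convergence; the paper's route is self-contained within the stability analysis and does not need the approximating systems at all. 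Both are sound.
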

\begin{proof}
The measurability of $\varphi$ can be obtained for the continuity with respect to variable $t,\omega,\xi$. Indeed,  the solution continuously depend on  $\omega$ and $\xi$ shall lead to the measurability of $\varphi$  with respect to $\mathcal{F}^\prime\otimes \mathcal{B}(R^{m})$.   Since $\varphi$ is continuous with respect to $t$, we obtain by Lemma 3 in \cite{CastaingValadier} the jointly measurability, i.e. ($\mathcal{B}{R^+} \otimes \mathcal{F}^\prime\otimes \mathcal{B}(R^{m}),\mathcal{B}(R^{m}))$.
The proof of continuity for variable $\omega,\xi$ is similar to Theorem \ref{theorem 5.2}, $y_{t}\in C^{p-var}([0,T],R^{m})\subset C([0,T];R^{m})$. It is trivial $\varphi(0,\omega,\xi)= \xi$.
Then we only need to check the cocycle property
\begin{small}
\begin{align*}
\varphi(t+\tau,\omega,\xi)&=\xi+\int_{0}^{t+\tau}Ay_{r}+f(y_{r})dr+\int_{0}^{t+\tau}g(y_{r})d\boldsymbol{\omega}_{r}\\
&=\xi+\int_{0}^{t}Ay_{r}+f(y_{r})dr+\int_{0}^{t}g(y_{r})d\boldsymbol{\omega}_{r}\\
&~~~~+\int_{t}^{t+\tau}Ay_{r}+f(y_{r})dr+\int_{t}^{t+\tau}g(y_{r})d\boldsymbol{\omega}_{r}\\
&=y_{t}+\int_{0}^{\tau}Ay_{r+t}+f(y_{r+t})dr+\int_{0}^{\tau}g(y_{r+t})d\theta^\prime_{t}\boldsymbol{\omega}_{r}\\
&=\varphi(\tau,\theta^\prime_{t}\omega,\varphi(t,\omega,\xi)),
\end{align*}
\end{small}
where the property $\int_{t}^{t+\tau}g(y_r)d\boldsymbol{\omega}_{r}=\int_{0}^{\tau}g(y_{r+t})d\theta^\prime_{t}\boldsymbol{\omega}_{r}$ can be directly obtained  from the Sewing lemma or (5.2) in \cite{MR4266219}. The same argument  gives us  the additivity  of  rough integral.  Thus, we complete the proof.
\end{proof}
Similar to Theorem \ref{theorem 5.3}, $y_{t}^{\delta}$ in $R^{m}$ can generate a random dynamical system $\varphi^{\delta}$. In addition, Theorem  \ref{theorem 5.2} implies  $\varphi^{\delta}\rightarrow \varphi$ as $\delta\rightarrow 0$.
\appendix
\section{Results on the lift of continuous Gaussian rough paths}\label{Appendix A}
\setcounter{equation}{0}

\renewcommand{\theequation}{B.\arabic{equation}}
In this section, we collect some results that we can use to lift a continuous Gaussian process to a rough path \cite[Chapter 10]{MR4174393}.

%For the canonical lifting of a continuous Gaussian processes into a rough paths, it is crucial to define the corresponding second order processes.
For a d-dimensional Gaussian process $X_t$, we need to define the following integral
$$\mathbb{X}^{i,j}_{s,t}=\int_{s}^{t}X^{i}_{s,r}dX^{j}_{r}.$$
Let $\mathcal{P}(s,t)$ be a partition of the interval $[s,t]$ and $\left|\mathcal{P}\right|$ be the maximum length of the partition intervals, and set

$$\int_{\mathcal{P}}X^{i}_{s,r}dX^{j}_{r}:=\sum_{[u,v]\in\mathcal{P}}X^{i}_{ s,u}X^{j}_{u,v}.$$
Under the assumption that $X^{i},X^{j},i\neq j$ are independent, we define
\begin{small}
$$
\int_{\mathcal{P}\times \mathcal{P}^{\prime}}R_{X^{i}}dR_{X^{j}}:=\mathbf{E}\left\{\int_{\mathcal{P}}X^{i}_{r,s}dX^{j}_{r}\int_{\mathcal{P^{\prime}}}X^{i}_{r,s}dX^{j}_{r} \right\}= $$
$$\sum_{\substack{
[u,v]\in\mathcal{P}\\
[u^{\prime},v^{\prime}]\in\mathcal{P}^{\prime}}
}R_{X^{i}}\left(\begin{array}{cc}
s & u \\
s & u^{\prime}
\end{array}\right)R_{X^{j}}\left(\begin{array}{cc}
u & v \\
u^{\prime} & v^{\prime}
\end{array}\right).
$$
\end{small}
Furthermore, if $R_{X^{i}}$ and $R_{X^{j}}$ have a finite $\rho$-variation, according to the Towghi-Young maximal inequality\cite{MR1906391}.  We have

$$\sup_{\substack{
\mathcal{P}\subset[s,t]\\
\mathcal{P}^{\prime}\subset[s,t]}
}\left|\int_{P\times P^{\prime}}R_{X^{i}}dR_{X^{j}}\right|\leq C(\theta)\|R_{X^{i}}\|_{\rho-var;[s,t]}\|R_{X^{j}}\|_{\rho-var;[s,t]},
$$
where $\theta=\frac{2}{\rho}>1$. %Hence we have the following Lemma to ensure that the second order process makes sense.
\begin{lemma}[\cite{MR4174393}, Proposition 10.3]\label{lemma 3.1}
Let $X_t$ be a d-dimensional, continuous, centered Gaussian processes  with covariance $R_{X^{i}}$ and $R_{X^{j}}$ for $1\leq i,j\leq d $ and have a $2>\rho$-variation. Then
\begin{equation}\nonumber
\begin{aligned}
\lim_{\epsilon\rightarrow 0}\sup_{\substack{
\mathcal{P}\subset[s,t]\\
\mathcal{P}^{\prime}\subset[s,t]\\
|\mathcal{P}|\vee |\mathcal{P}^{\prime}|<\epsilon}
}\left|\int_{\mathcal{P}}X_{s,r}^{i}dX_{r}^{j}-\int_{\mathcal{P}^{\prime}}X_{s,r}^{i}dX_{r}^{j}\right|_{L^{2}}=0.
\end{aligned}
\end{equation}
Thus, $\int_{s}^{t}X_{s,r}^{i}dX_{r}^{j}$ exists as the $L^{2}$ limit of the $\int_{\mathcal{P}}X_{r,s}^{i}dX_{r}^{j}$ as $\left|\mathcal{P}\right|\rightarrow 0$ and

$$\mathbb{E}\left[\left(\int_{s}^{t}X_{s,r}^{i}dX_{r}^{j}\right)^{2}\right]\leq C\|R_{X^{i}}\|_{\rho-var;[s,t]^{2}}\|R_{X^{j}}\|_{\rho-var;[s,t]^{2}},$$
where constant $C$ depends on $\rho$.
\end{lemma}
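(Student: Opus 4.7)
\medskip

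\noindent\textbf{Proof proposal.} The plan is to first control the $L^2$-norm of a single Riemann sum via the Towghi--Young maximal inequality, then to identify the family $\{\int_{\mathcal{P}} X^i_{s,r}\,dX^j_r\}_{\mathcal{P}}$ as Cauchy in $L^2$ by comparing an arbitrary partition with a common refinement, and finally to pass to the limit. The cases $i\neq j$ and $i=j$ will be treated separately since only in the former one has independence of $X^i$ and $X^j$.

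First, for the case $i\neq j$, I would use the independence of the components to write, for a partition $\mathcal{P}$ of $[s,t]$,
\begin{equation*}
\mathbb{E}\Bigl[\Bigl(\textstyle\int_{\mathcal{P}} X^i_{s,r}\,dX^j_r\Bigr)^{2}\Bigr]
=\sum_{[u,v],[u',v']\in\mathcal{P}} R_{X^i}\!\left(\begin{smallmatrix} s & u \\ s & u' \end{smallmatrix}\right) R_{X^j}\!\left(\begin{smallmatrix} u & v \\ u' & v' \end{smallmatrix}\right)
=\int_{\mathcal{P}\times\mathcal{P}} R_{X^i}\,dR_{X^j}.
\end{equation*}
The Towghi--Young maximal inequality (valid because $\rho<2$, so $\theta=2/\rho>1$) then yields the uniform bound
$\bigl|\int_{\mathcal{P}} X^i_{s,r}\,dX^j_r\bigr|_{L^2}^{2}\le C(\rho)\|R_{X^i}\|_{\rho\text{-var};[s,t]^2}\|R_{X^j}\|_{\rho\text{-var};[s,t]^2}$, which is exactly the estimate claimed in the conclusion (once the Cauchy argument below is completed).

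Next, to obtain the Cauchy property in $L^2$, I would take two partitions $\mathcal{P},\mathcal{P}'$ and set $\mathcal{P}''=\mathcal{P}\cup\mathcal{P}'$. A direct computation using the decomposition $X^j_{u,v}=\sum_{m} X^j_{a_m,a_{m+1}}$ on each sub-interval $[u,v]=[a_0,a_k]$ of $\mathcal{P}$ refined by $\mathcal{P}''$ shows
\begin{equation*}
\int_{\mathcal{P}}X^i_{s,r}\,dX^j_r-\int_{\mathcal{P}''}X^i_{s,r}\,dX^j_r
=-\sum_{[u,v]\in\mathcal{P}}\sum_{m\ge 1}X^i_{u,a_m}\,X^j_{a_m,a_{m+1}}.
\end{equation*}
Squaring and taking expectations (again using independence for $i\neq j$) expresses this as a double sum over sub-intervals of $\mathcal{P}''$ of products $R_{X^i}(\cdot)R_{X^j}(\cdot)$ where the $R_{X^i}$ rectangles are restricted to cells of $\mathcal{P}$. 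Applying Towghi--Young cell by cell and exploiting superadditivity of the $\rho$-variation control function $(u,v)\mapsto\|R_{X^i}\|_{\rho\text{-var};[u,v]^2}^{\rho}$ (combined with uniform continuity of this control in the diagonal direction), one obtains a bound of the form
$\bigl|\int_{\mathcal{P}}-\int_{\mathcal{P}''}\bigr|_{L^2}^{2}\le C(\rho)\,\omega(|\mathcal{P}|)\,\|R_{X^j}\|_{\rho\text{-var};[s,t]^2}$,
where $\omega(|\mathcal{P}|)\to 0$ as $|\mathcal{P}|\to 0$; the same estimate holds for $\mathcal{P}'$ versus $\mathcal{P}''$, and the triangle inequality closes the Cauchy argument. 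Completeness of $L^2$ then defines the integral and the uniform bound passes to the limit.

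For the case $i=j$, independence is lost, so the previous route fails. Here I would use the algebraic identity $X^i_{s,u}X^i_{u,v}=\tfrac12\bigl((X^i_{s,v})^{2}-(X^i_{s,u})^{2}-(X^i_{u,v})^{2}\bigr)$, which turns the Riemann sum into the telescoping expression
\begin{equation*}
\int_{\mathcal{P}}X^i_{s,r}\,dX^i_r=\tfrac12(X^i_{s,t})^{2}-\tfrac12\sum_{[u,v]\in\mathcal{P}}(X^i_{u,v})^{2}.
\end{equation*}
The $L^2$-convergence of the quadratic-variation sum $\sum_{\mathcal{P}}(X^i_{u,v})^{2}$ as $|\mathcal{P}|\to 0$ is the key remaining step, and it follows from Gaussianity (so that $\mathrm{Var}((X^i_{u,v})^{2})$ is comparable to $(\mathbb{E}(X^i_{u,v})^{2})^{2}$) combined with the finite $\rho$-variation of $R_{X^i}$, together with a second application of Towghi--Young to handle the cross-terms $\mathbb{E}[(X^i_{u,v})^{2}(X^i_{u',v'})^{2}]-\mathbb{E}(X^i_{u,v})^{2}\mathbb{E}(X^i_{u',v'})^{2}=2\,R_{X^i}\bigl(\begin{smallmatrix}u&v\\u'&v'\end{smallmatrix}\bigr)^{2}$. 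The final bound $\mathbb{E}[(\int_s^t X^i_{s,r}dX^i_r)^2]\le C\|R_{X^i}\|_{\rho\text{-var};[s,t]^2}^{2}$ then follows.

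The main obstacle I expect is the Cauchy step for $i\neq j$: keeping track of the iterated-sum structure after partition refinement while ensuring the bound degenerates only through the small-interval $\rho$-variation and not through the total variation on $[s,t]$. This is exactly where the superadditivity/continuity of the control $\omega(\cdot)=\|R_{X^i}\|_{\rho\text{-var};\cdot}^{\rho}$ plays the decisive role, and where the hypothesis $\rho<2$ is used through $\theta=2/\rho>1$ in Towghi--Young.
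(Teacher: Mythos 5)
The paper offers no proof of this lemma: it is quoted verbatim from Friz--Hairer (Proposition 10.3) in Appendix~A, so the only meaningful comparison is with the standard argument there. For $i\neq j$ your proposal essentially reproduces that argument: independence of the components turns the second moment of a Riemann sum into the $2$D Young sum $\int_{\mathcal{P}\times\mathcal{P}}R_{X^i}\,dR_{X^j}$, the Towghi--Young maximal inequality (using $\theta=2/\rho>1$) gives the uniform bound, and the Cauchy property follows by comparing with a common refinement and exploiting superadditivity of the control $\|R_{X^i}\|^{\rho}_{\rho\text{-var};\cdot}$ together with its smallness on thin rectangles near the diagonal (in practice one sacrifices $\rho<\rho'<2$ and interpolates with the oscillation to get the modulus $\omega(|\mathcal{P}|)\to 0$; you flag this point correctly). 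Note, however, that the independence of $X^i$ and $X^j$ is an essential hypothesis that the lemma as transcribed in the paper omits but that the surrounding text of Appendix~A assumes.

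The genuine gap is your $i=j$ case. After the telescoping identity $\int_{\mathcal{P}}X^i_{s,r}\,dX^i_r=\tfrac12(X^i_{s,t})^2-\tfrac12\sum_{[u,v]\in\mathcal{P}}(X^i_{u,v})^2$, you assert that the quadratic-variation sums converge in $L^2$ under finite $\rho$-variation of $R_{X^i}$ with $\rho<2$. This is false: Isserlis' formula controls only the \emph{variance} of $\sum_{\mathcal{P}}(X^i_{u,v})^2$ via $2\sum\sum R_{X^i}\bigl(\begin{smallmatrix}u&v\\u'&v'\end{smallmatrix}\bigr)^2$, while the \emph{mean} $\sum_{\mathcal{P}}\mathbb{E}(X^i_{u,v})^2$ need not converge. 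For fractional Brownian motion with $H\in(\tfrac13,\tfrac12)$ --- exactly the regime of this paper, with $\rho=\tfrac{1}{2H}<2$ --- the mean over a uniform partition of mesh $1/n$ is of order $n^{1-2H}\to\infty$, so the left-endpoint Riemann sums diverge for $i=j$ and your claimed bound $\mathbb{E}[(\int_s^tX^i_{s,r}dX^i_r)^2]\le C\|R_{X^i}\|^2_{\rho\text{-var};[s,t]^2}$ cannot hold. This is precisely why the source (and Theorem~\ref{theorem 3.1} here) does not define the diagonal entry as a limit of Riemann sums but sets $\mathbb{X}^{i,i}_{s,t}:=\tfrac12(X^i_{s,t})^2$ by fiat; the correct reading of Lemma~\ref{lemma 3.1} restricts to $i\neq j$ with independent components, and the $i=j$ case should simply be excluded rather than ``proved''.
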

%From the above Lemma we know that the $\rho$-variation of the covariance $R_{X^{i}}$ is very important   to define the second process, hence we need a sufficient condition to ensure that $\|R_{X^{i}}\|_{\rho-var;[s,t]^{2}}$ have an upper bound. We give this sufficient condition in     the following Lemma.
\begin{lemma}[\cite{MR4174393},Theorem 10.9]\label{lemma 3.2}
Let $X_t$ be a real-valued Gaussian process with stationary increments and $\sigma^{2}_{X}(u)=\mathbb{E}(X_{t+u}-X_{t})^{2}$ be concave and non-decreasing on $[0, h]$ for some $h>0$ and $t\in R$. Further, assume that there exist $\rho \geq 1$ and $L>0$ such that for all $u \in[0, h]$
$$
\left|\sigma_{X}^{2}(u)\right| \leq L u^{\frac{1}{\rho}}.
$$
Then, we have
$$
\left\|R_{X}\right\|_{\rho-var,[s, t]^{2}} \leq M(t-s)^{\frac{1}{\rho}}
$$
for all $[s, t]$ with $|t-s| \leq h$ and $M=M(\rho, L)$.
\end{lemma}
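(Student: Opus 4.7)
The plan is to estimate the $\rho$-variation of $R_X$ on $[s,t]^2$ by reducing everything to combinatorial estimates on $\sigma_X^2$ via the stationary-increment polarization identity
\begin{equation*}
2R_X\begin{pmatrix} u & v \\ u' & v' \end{pmatrix} = \sigma_X^2(|v-u'|) + \sigma_X^2(|u-v'|) - \sigma_X^2(|v-v'|) - \sigma_X^2(|u-u'|).
\end{equation*}
The hypothesis $|t-s| \le h$ ensures every argument of $\sigma_X^2$ appearing below stays in the concavity window $[0,h]$ where the growth bound $\sigma_X^2(u) \le L u^{1/\rho}$ is valid.

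Fix arbitrary partitions $\mathcal{P}, \mathcal{P}'$ of $[s,t]$ and a subinterval $[u_i, v_i] \in \mathcal{P}$ with length $a_i := v_i - u_i$. I would split the inner sum over $[u'_j, v'_j] \in \mathcal{P}'$ into three groups according to the geometric relation with $[u_i, v_i]$: those disjoint from $[u_i, v_i]$, those contained in $[u_i, v_i]$, and the at most two boundary intervals which overlap without being contained. Treating the intervals lying strictly to the right of $[u_i,v_i]$ (the left side is identical), set $b_j := u'_j - v_i$ and $c_j := v'_j - u'_j$; substitution into the identity gives
\begin{equation*}
2\left|R_X\begin{pmatrix} u_i & v_i \\ u'_j & v'_j \end{pmatrix}\right| = F(b_j) - F(b_j + c_j), \qquad F(x) := \sigma_X^2(x+a_i) - \sigma_X^2(x).
\end{equation*}
Concavity of $\sigma_X^2$ forces $F$ to be non-increasing and non-negative, so because the intervals $[b_j, b_j+c_j]$ are pairwise disjoint a telescoping argument yields $\sum_j [F(b_j) - F(b_j + c_j)] \le F(0) = \sigma_X^2(a_i) \le L a_i^{1/\rho}$. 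Combining this with the uniform bound $|2R_X| \le \sigma_X^2(a_i) \le L a_i^{1/\rho}$ and bootstrapping,
\begin{equation*}
\sum_j |2R_X|^\rho \le \bigl(L a_i^{1/\rho}\bigr)^{\rho-1} \sum_j |2R_X| \le L^\rho a_i.
\end{equation*}

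For intervals contained in $[u_i, v_i]$, subadditivity of concave $\sigma_X^2$ vanishing at the origin gives $|R_X| \le \sigma_X^2(c_j)$, so $\sum_j |R_X|^\rho \le L^\rho \sum_j c_j \le L^\rho a_i$ because these intervals fit inside $[u_i, v_i]$ and have total length at most $a_i$. The at most two boundary intervals can be handled with a minor variant of the disjoint argument, contributing $\le L^\rho a_i$ in total. Summing over $i$ then gives $\sum_{i,j} |R_X|^\rho \le C L^\rho \sum_i a_i = CL^\rho(t-s)$, and taking $\rho$-th roots concludes the proof with $M = C^{1/\rho} L = M(\rho, L)$.

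The main obstacle is the telescoping in the disjoint case: it simultaneously requires concavity of $\sigma_X^2$ (for monotonicity of $F$) and the non-decreasing hypothesis (for the sign $F \ge 0$), together with the specific sign structure of the rectangular increment that the stationary-increment identity produces. Once this key estimate is in place, the remaining cases reduce to routine bookkeeping built on subadditivity of $\sigma_X^2$ and a Cauchy--Schwarz-type control of $R_X$.
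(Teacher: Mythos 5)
The paper does not prove this lemma at all: it is imported verbatim from Friz--Hairer \cite[Theorem 10.9]{MR4174393} in Appendix A, so there is no in-paper proof to compare against. Your argument is precisely the standard proof of that cited theorem --- the stationary-increment polarization identity, the observation that $2|R_X|=F(b_j)-F(b_j+c_j)$ with $F(x)=\sigma_X^2(x+a_i)-\sigma_X^2(x)$ non-increasing (concavity) and non-negative (monotonicity), the telescoping bound $\sum_j[F(b_j)-F(b_j+c_j)]\leq F(0)=\sigma_X^2(a_i)$ over the disjoint intervals, and the bootstrap $\sum_j|2R_X|^{\rho}\leq(\sup_j|2R_X|)^{\rho-1}\sum_j|2R_X|$ --- and all the key steps check out, including the subadditivity bound $|R_X|\leq\sigma_X^2(c_j)$ in the contained case. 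The only imprecision is in the boundary case: the cleanest fix is to split each straddling interval $[u_j',v_j']$ at the endpoint of $[u_i,v_i]$ it crosses and apply the contained and disjoint estimates to the two pieces, which yields a contribution $\leq CL^{\rho}a_i$ per boundary interval; this only changes the constant $M(\rho,L)$ and does not affect the conclusion.
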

%\begin{remark}
%In this paper, we use this Lemma to estimate $\rho$-variation of $R_{X}$, but  it fails for the difference process which is the difference between fractional Brownian motion and its approximative process.
%\end{remark}
%Based on these facts we have the following theorem to lift Gaussian process to a rough path.
\begin{theorem}[\cite{MR4174393},Theorem 10.4]\label{theorem 3.1}
Let $\left(X_{t}\right)_{t \in[0, T]}$ be a d-dimensional, continuous Gaussian process with independent components and covariance function $R$ such that there exists a $\rho \in[1,2)$ and $M>0$ such that for every $i \in\{1, \ldots, d\}$,

$$
\left\|R_{X^{i}}\right\|_{\rho-var,[s, t]^{2}} \leq M(t-s)^{\frac{1}{\rho}}, \quad \text { for } 0 \leq s \leq t \leq T.
$$
Then, we define for $1 \leq i<j \leq d$

$$
\begin{aligned}
\mathbb{X}_{s, t}^{i, j} &=\lim _{|\mathcal{P}| \rightarrow 0} \int_{\mathcal{P}}X^{i}_{s, r} dX_{r}^{j} \quad \text { in } L^{2} \text { sense}, \\
\mathbb{X}_{s, t}^{i, i} &=\frac{1}{2} \left(X^{i}_{s, t}\right)^{2}  \quad \text{and} \quad
\mathbb{X}_{s, t}^{ j, i} =-\mathbb{X}_{s, t}^{ i, j}+X^{i}_{s, t} X^{j}_{s, t}.
\end{aligned}
$$
Further, the following properties hold:\\
i) For every $q \geq 1$ there exists $C=C(q, \rho, d, T)$ such that

$$
\mathbb{E}\left(\left|X_{s, t}\right|^{2 q}+\left|\mathbb{X}_{s, t}^{(2)}\right|^{q}\right) \leq C M^{q}(t-s)^{\frac{q}{\rho}}.
$$
ii) There exists a continuous modification of $\mathbb{X}$ (denoted by the same letter from here on.) Further, for any $\alpha<\frac{1}{2 \rho}$ and $q \geq 1$ there exists $C=C(q, \rho, d, \alpha)$ such that

$$
\mathbb{E}\left(\|X\|_{\alpha}^{2 q}+\left\|\mathbb{X}\right\|_{2 \alpha}^{q}\right) \leq C M^{q}.
$$
iii) For any $\alpha<\frac{1}{2 \rho}, \left(X, \mathbb{X}\right)$ fulfills the Chen equation and \eqref{2.8} with probability one. In particular, for $\rho \in\left[1, \frac{3}{2}\right)$ and any $\alpha \in\left(\frac{1}{3}, \frac{1}{2 \rho}\right)$ we have $\left(X, \mathbb{X}\right) \in \mathscr{C}_{g}^{\alpha}$ a.s.
\end{theorem}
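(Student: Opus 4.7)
The plan is to build the iterated-integral component $\mathbb{X}$ as an $L^{2}$ limit of Riemann sums, promote the resulting second-moment bounds to all $L^{q}$ by Gaussian hypercontractivity, invoke a Kolmogorov-type continuity criterion for rough paths to obtain a continuous H\"older modification, and finally verify Chen's relation together with the geometric property. The driving input throughout is the two-parameter $\rho$-variation hypothesis on each covariance $R_{X^{i}}$, together with Lemma \ref{lemma 3.1}.

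\textbf{Construction of $\mathbb{X}$ and the $L^{2}$ estimate.} For off-diagonal indices $i\neq j$, independence of $X^{i}$ and $X^{j}$ lets me apply Lemma \ref{lemma 3.1}: the Riemann sums $\sum_{[u,v]\in\mathcal{P}}X^{i}_{s,u}X^{j}_{u,v}$ are $L^{2}$-Cauchy as $|\mathcal{P}|\to 0$, so I take $\mathbb{X}^{i,j}_{s,t}$ to be the limit and obtain $\mathbb{E}|\mathbb{X}^{i,j}_{s,t}|^{2}\leq C\,\|R_{X^{i}}\|_{\rho\text{-var},[s,t]^{2}}\|R_{X^{j}}\|_{\rho\text{-var},[s,t]^{2}}\leq CM^{2}(t-s)^{2/\rho}$. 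The diagonal $\mathbb{X}^{i,i}_{s,t}:=\tfrac{1}{2}(X^{i}_{s,t})^{2}$ and the transpose $\mathbb{X}^{j,i}:=-\mathbb{X}^{i,j}+X^{i}X^{j}$ are defined by fiat; since the hypothesis forces $\mathbb{E}|X^{i}_{s,t}|^{2}\leq M(t-s)^{1/\rho}$, the same $(t-s)^{2/\rho}$ bound persists for every entry of $\mathbb{X}_{s,t}$.

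\textbf{Higher moments and H\"older regularity.} The increment $X^{i}_{s,t}$ lives in the first Wiener chaos generated by $X$, while $\mathbb{X}^{i,j}_{s,t}$ lies in (the closure of) the second Wiener chaos as an $L^{2}$ limit of bilinear polynomials in a Gaussian family. Gaussian hypercontractivity, i.e.\ equivalence of all $L^{q}$ norms on a fixed finite-order chaos, therefore upgrades the $L^{2}$ bound to the estimate in (i): $\mathbb{E}(|X_{s,t}|^{2q}+|\mathbb{X}_{s,t}|^{q})\leq C M^{q}(t-s)^{q/\rho}$ for every $q\geq 1$. To obtain (ii), I pick $q$ large enough that the exponent $q/\rho$ strictly exceeds $1$ and apply the Kolmogorov-type continuity criterion for rough paths: the moment bounds yield a continuous modification of $\mathbb{X}$ and, simultaneously, the $\alpha$-H\"older control of $X$ and the $2\alpha$-H\"older control of $\mathbb{X}$ for any $\alpha<1/(2\rho)$ in the stated $L^{2q}$ and $L^{q}$ senses.

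\textbf{Chen's relation and the geometric property.} Chen's identity for $i\neq j$ comes from splitting the defining Riemann sums along a refinement through $u\in[s,t]$: writing $S_{\mathcal{P}}(s,t)=S_{\mathcal{P}_{1}}(s,u)+S_{\mathcal{P}_{2}}(u,t)+X^{i}_{s,u}X^{j}_{u,t}$ and passing to the $L^{2}$ limit. For $i=j$ it reduces to the algebraic identity $\tfrac{1}{2}(X^{i}_{s,t})^{2}=\tfrac{1}{2}(X^{i}_{s,u})^{2}+\tfrac{1}{2}(X^{i}_{u,t})^{2}+X^{i}_{s,u}X^{i}_{u,t}$, and combining the two cases gives Chen for all index pairs. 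The symmetric relation $\mathbb{X}^{i,j}+\mathbb{X}^{j,i}=X^{i}X^{j}$ holds by construction, which is exactly the weakly geometric condition. I expect the main obstacle to be showing that $(X,\mathbb{X})$ actually belongs to the closure $\mathscr{C}_{g}^{\alpha}$ of canonical lifts of smooth paths, rather than merely satisfying the symmetric identity pointwise. The route I would take is to approximate $X$ by piecewise-linear interpolations $X^{n}$ along dyadic partitions, whose canonical lifts $(X^{n},\mathbb{X}^{n})$ are geometric by definition, and to show $(X^{n},\mathbb{X}^{n})\to(X,\mathbb{X})$ in the $\alpha$-H\"older rough-path metric; the uniform moment bounds already established, together with a Kolmogorov-tightness argument and an $L^{2}$ convergence check on each fixed pair $(s,t)$, suffice, and closedness of $\mathscr{C}_{g}^{\alpha}$ then delivers (iii) in the restricted range $\rho\in[1,\tfrac{3}{2})$, $\alpha\in(\tfrac{1}{3},\tfrac{1}{2\rho})$.
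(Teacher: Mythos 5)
This theorem is imported verbatim from Friz--Hairer \cite[Theorem 10.4]{MR4174393} and the paper supplies no proof of its own, so the only benchmark is the argument in that reference. Your sketch reproduces that standard argument correctly --- the $L^2$ construction of $\mathbb{X}^{i,j}$ via Lemma \ref{lemma 3.1} and the Towghi--Young bound, hypercontractivity to pass from $L^2$ to $L^q$ on the first and second chaos, the Kolmogorov criterion for rough paths for the continuous modification and the H\"older estimates, the Riemann-sum splitting for Chen's relation, and piecewise-linear approximation to land in the closure $\mathscr{C}_g^{\alpha}$ rather than merely in the weakly geometric class --- so there is nothing to correct.
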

\section{Appendix B}\label{appendix B}
\setcounter{equation}{0}

\renewcommand{\theequation}{B.\arabic{equation}}
\begin{theorem}\label{theorem 7.1}
Let $Z_{s,t}=\int_{s}^{t}g(y_{r})d\boldsymbol{\omega}_{r}$ be the rough integral for \eqref{5.1}, $Z^{\delta}_{s,t}=\int_{s}^{t}g(y^\delta_{r})d\boldsymbol{\omega}_{\delta,r}$ which emerges in \eqref{5.2} and can be understood a rough integral.  Then the following estimate holds
\begin{small}
\begin{align*}
&\| Z_{s,t}-Z^{\delta}_{s,t}\|\leq 15C_{p}(C_{g}^{2}\interleave \boldsymbol{\omega}\interleave^{2}_{p-var,[s,t]}\vee C_{g}\interleave \boldsymbol{\omega}\interleave_{p-var,[s,t]})\nonumber\\
&~~~~\times\left(\interleave y^{\delta}\interleave_{p-var,[s,t]}+\interleave y\interleave_{p\!-\!var,[s,t]}\!+\!\interleave R^{y}\interleave_{q\!-\!var,[s,t]^{2}}\!+\!1\right)\nonumber\\
&~~~~\times\left(\interleave y\!-\!y^{\delta}\interleave_{p\!-\!var,[s,t]}\!+\!\|y\!-\!y^{\delta}\|_{\infty,[s,t]}\!+\!\interleave R^{y}\!-\!R^{y^{\delta}}\interleave_{q\!-\!var,[s,t]^{2}}\right)\nonumber\\
&~~~~+\left(\interleave y^{\delta}\interleave_{p-var,[s,t]}(\interleave W_{\delta}(\cdot,\omega^{1})\interleave_{p-var,[s,t]}\!+\!\interleave\omega^{1}\interleave_{p-var,[s,t]})\nonumber\right.\\
&~~~~\left.+\interleave R^{y^{\delta}}\interleave_{q-var,[s,t]^{2}}+1\right)C_{g}^{2}\vee C_{g}\interleave \omega^{1}\!-\!W_{\delta}(\cdot,\omega^{1})\interleave_{p-var,[s,t]}\nonumber\\
&~~~~+2C_{g}^{2}C_{p}\left[\interleave y^{\delta} \interleave_{p-var,[s,t]}+1\right]\interleave\omega^{2}-\mathbb{W}_{\delta}(\omega^{1})\interleave_{q-var,[s,t]^{2}}.
\end{align*}
\end{small}
\end{theorem}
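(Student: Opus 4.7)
The plan is to apply the local rough-integral expansion \eqref{5.10} to both $Z_{s,t}$ and $Z^{\delta}_{s,t}$ and compare the two germ expansions. By the composition Lemma (for $g\in C_b^{2}$) we have $(g(y),Dg(y)g(y))\in\mathcal{D}_{\omega}^{2\beta}$ with remainder
$$R^{g(y)}_{s,t}=g(y_{t})-g(y_{s})-Dg(y_{s})g(y_{s})\omega^{1}_{s,t},$$
and the analogous identities hold for $(g(y^{\delta}),Dg(y^{\delta})g(y^{\delta}))$ against the smooth path $W_{\delta}(\cdot,\omega^{1})$. Invoking \eqref{5.10} on each integral produces a germ-plus-error decomposition
$$Z_{s,t}=g(y_{s})\omega^{1}_{s,t}+Dg(y_{s})g(y_{s})\omega^{2}_{s,t}+E_{s,t},\qquad Z^{\delta}_{s,t}=g(y^{\delta}_{s})W_{\delta}(\cdot,\omega^{1})_{s,t}+Dg(y^{\delta}_{s})g(y^{\delta}_{s})\mathbb{W}_{\delta}(\omega^{1})_{s,t}+E^{\delta}_{s,t},$$
so $Z_{s,t}-Z^{\delta}_{s,t}$ splits into three pieces: a level-one germ difference, a level-two germ difference, and the sewing-error difference $E_{s,t}-E^{\delta}_{s,t}$.

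The two germ differences are handled by the standard add-subtract-cross-term trick. For the level-one piece,
$$g(y_{s})\omega^{1}_{s,t}-g(y^{\delta}_{s})W_{\delta}(\cdot,\omega^{1})_{s,t}=\bigl(g(y_{s})-g(y^{\delta}_{s})\bigr)\omega^{1}_{s,t}+g(y^{\delta}_{s})\bigl(\omega^{1}_{s,t}-W_{\delta}(\cdot,\omega^{1})_{s,t}\bigr),$$
and analogously at level two after inserting $Dg(y^{\delta}_{s})g(y^{\delta}_{s})\omega^{2}_{s,t}$. The uniform bounds $\|g\|_\infty,\|Dg\|_\infty,\|D^{2}g\|_\infty\le C_{g}$ and a mean-value argument turn these into the contributions $C_{g}^{2}\interleave\boldsymbol{\omega}\interleave_{p\text{-}var}^{2}\|y-y^{\delta}\|_{\infty}$, plus terms proportional to $\interleave\omega^{1}-W_{\delta}(\cdot,\omega^{1})\interleave_{p\text{-}var}$ and $\interleave\omega^{2}-\mathbb{W}_{\delta}(\omega^{1})\interleave_{q\text{-}var}$, which match the corresponding summands on the right-hand side. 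For the error piece $E_{s,t}-E^{\delta}_{s,t}$, \eqref{5.10} applied to each integral yields bounds involving $\interleave R^{g(y)}\interleave_{q\text{-}var}$, $\interleave Dg(y)g(y)\interleave_{p\text{-}var}$ (and their $\delta$-counterparts) against the level-one and level-two path norms; using the chain-rule formulas above, these controlled-path norms reduce to $C_{g}$ times $1+\interleave y\interleave_{p\text{-}var}+\interleave R^{y}\interleave_{q\text{-}var}$ (and similarly for $y^{\delta}$), while the differences of the remainders reduce to $\|y-y^{\delta}\|_{\infty}+\interleave y-y^{\delta}\interleave_{p\text{-}var}+\interleave R^{y}-R^{y^{\delta}}\interleave_{q\text{-}var}$ after a second round of add-subtract-cross-terms.

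The main obstacle is precisely the bookkeeping of the remainder-difference $R^{g(y)}-R^{g(y^{\delta)}}$: the two remainders are built from \emph{different} driving paths, so no direct Lipschitz estimate applies. The way out is to expand
$$R^{g(y)}_{s,t}-R^{g(y^{\delta})}_{s,t}=\bigl[g(y_{t})-g(y^{\delta}_{t})\bigr]-\bigl[g(y_{s})-g(y^{\delta}_{s})\bigr]-Dg(y_{s})g(y_{s})\omega^{1}_{s,t}+Dg(y^{\delta}_{s})g(y^{\delta}_{s})W_{\delta}(\cdot,\omega^{1})_{s,t},$$
inject the cross term $Dg(y^{\delta}_{s})g(y^{\delta}_{s})\omega^{1}_{s,t}$, substitute $y_{s,t}=g(y_{s})\omega^{1}_{s,t}+R^{y}_{s,t}$ and its $\delta$-analogue when estimating $g(y_{t})-g(y_{s})-g(y^{\delta}_{t})+g(y^{\delta}_{s})$ via Taylor's theorem with remainder, and separate cleanly the ``state-difference'' contributions from the ``path-difference'' contributions. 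Collecting the resulting pieces, bounding each by the product of a $C_{g}$- or $C_{g}^{2}$-constant times a $p$- or $q$-variation norm from the list $\{\interleave\omega^{1}\interleave,\interleave\omega^{2}\interleave,\interleave W_{\delta}(\cdot,\omega^{1})\interleave,\interleave\mathbb{W}_{\delta}(\omega^{1})\interleave,\interleave y\interleave,\interleave y^{\delta}\interleave,\interleave R^{y}\interleave,\interleave R^{y^{\delta}}\interleave\}$ times the displayed differences, and summing up the constants carefully, yields the claimed bound with leading constant $15C_{p}$.
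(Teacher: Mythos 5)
Your proposal follows essentially the same route as the paper: split $Z_{s,t}-Z^{\delta}_{s,t}$ into the level-one germ difference, the level-two germ difference, and the sewing-error of the difference germ $\Xi-\Xi^{\delta}$, then control that last piece through the increment $\delta\triangle$ of the difference germ, which forces exactly the careful expansion of $R^{g(y)}-R^{g(y^{\delta})}$ via cross terms and the mean-value inequality \eqref{5.29} that you describe. The only point to state with care is that \eqref{5.10} must be invoked, via linearity of the sewing map, for the \emph{difference} germ rather than for each integral separately (otherwise the error term would not vanish as $\delta\to 0$); your discussion of the ``differences of the remainders'' shows you intend precisely this, so the argument is sound and matches the paper's.
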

\begin{proof}
We first estimate $\|Z_{s,t}-Z^\delta_{s,t}\|$, applying  Definition  \ref{def 5.1} we have
\begin{align}\label{5.19}
\|Z_{s,t}-Z^{\delta}_{s,t}\|&=\|g(y_{s})\omega^{1}_{s,t}-g(y_{s}^{\delta})W_{\delta}(\cdot,\omega^{1})_{s,t}+R^{Z}_{s,t}-R^{Z^\delta}_{s,t}\|\nonumber\\
&\leq \|g(y_{s})\omega^{1}_{s,t}-g(y_{s}^{\delta})W_{\delta}(\cdot,\omega^{1})_{s,t}\|+ \|R^{Z}_{s,t}-R^{Z^\delta}_{s,t}\|\nonumber\\
&=:A_{1}+A_{2}.
\end{align}
For $A_{1}$ we have
\begin{align}\label{5.20}
A_{1}&\leq \|g(y_{s})\omega^{1}_{s,t}-g(y^{\delta}_{s})\omega_{s,t}^{1}\|+\|g(y^{\delta}_{s})(\omega_{s,t}^{1}-W_{\delta}(\cdot,\omega^{1})_{s,t})\|\nonumber\\
&\leq C_{g}\|y-y^{\delta}\|_{\infty,[s,t]}\interleave \omega^{1} \interleave_{p-var,[s,t]}+C_{g}\interleave\omega^{1}- W_{\delta}(\cdot,\omega^{1})\interleave_{p-var,[s,t]}.
\end{align}
For $A_{2}$, let
\begin{equation}
\Xi_{s,t}:=g(y_{s})\omega^{1}_{s,t}+Dg(y_{s})g(y_{s})\omega^{2}_{s,t},
\end{equation}
\begin{equation}
\Xi^\delta_{s,t}:=g(y^{\delta}_{s})W_{\delta}(\cdot,\omega^{1})_{s,t}+Dg(y^{\delta}_{s})g(y^{\delta}_{s})\mathbb{W}_{\delta}(\omega^{1})_{s,t},
\end{equation}
\begin{equation}
\triangle_{s,t}=\Xi_{s,t}-\Xi^\delta_{s,t},
\end{equation}
then  we have that
\begin{align}\label{5.21}
A_{2}&=\|R^{Z}_{s,t}-R^{Z^{\delta}}_{s,t}\|\nonumber\\
&\leq\|\mathcal{I}(\triangle)_{s,t}-\triangle_{s,t}\|+\|Dg(y_{s})g(y_{s})\omega^{2}_{s,t}-Dg(y^{\delta}_{s})g(y^{\delta}_{s})\mathbb{W}_{\delta}(\omega^{1})_{s,t}\|\nonumber\\
&=:B_{1}+B_{2},
\end{align}
where the mapping $\mathcal{I}: \Xi_{s,t}\mapsto Z_{s,t}$  and   $\mathcal{I}: \Xi^{\delta}_{s,t}\mapsto Z^{\delta}_{s,t}$.  For $B_{1}$, according to the linearity of the mapping $\mathcal{I}$, we have another version of  $\eqref{5.10}$ for $\triangle$. Thus we have
\begin{align}
B_{1}\leq C_{p}\interleave \delta\triangle \interleave_{\frac{p}{3}-var},
\end{align}
where
\begin{small}
\begin{align}
(\delta\triangle)_{s,u,t}&=R^{g(y)}_{s,u}\omega^{1}_{u,t}-R_{s,u}^{g(y^{\delta})}W_{\delta}(\cdot,\omega^{1})_{u,t}\nonumber\\
&~~~~+(Dg(y_{\cdot})g(y_{\cdot}))_{s,u}\omega^{2}_{u,t}-(Dg(y^{\delta}_{\cdot})g(y^{\delta}_{\cdot}))_{s,u}\mathbb{W}_{\delta}(\omega^{1})_{u,t}\nonumber\\
&=:B_{1,1}+B_{1,2}.
\end{align}
\end{small}
For $B_{1,1}$, we have
\begin{align}\label{5.24}
\|B_{1,1}\|&=\|R^{g(y)}_{s,u}\omega^{1}_{u,t}-R_{s,u}^{g(y^{\delta})}W_{\delta}(\cdot,\omega^{1})_{u,t}\|\nonumber\\
&\leq \|R^{g(y)}_{s,u}-R^{g(y^{\delta})}_{s,u}\|\|\omega^{1}_{u,t}\|+\|R^{g(y^{\delta})}_{s,u}\|\|\omega^{1}_{u,t}-W_{\delta}(\cdot,\omega^{1})_{u,t} \|,
\end{align}
where
\begin{align}
R^{g(y)}_{s,u}&=g(y_{\cdot})_{s,u}-Dg(y_{s})g(y_{s})\omega^{1}_{s,u}\nonumber\\
&=\int_{0}^{1}Dg(y_{s}+ry_{s,u})R^{y}_{s,u}dr+\int_{0}^{1}\left[Dg(y_{s}+ry_{s,u})-Dg(y_{s})\right]g(y_{s})\omega^{1}_{s,u}dr\nonumber
\end{align}
and
\begin{align}
&R^{g(y^{\delta})}_{s,u}=g(y^{\delta})_{s,u}-Dg(y^{\delta}_{s})g(y^{\delta}_{s})W_{\delta}(\cdot,\omega^{1})_{s,u}\nonumber\\
&~~~=\int_{0}^{1}Dg(y^{\delta}_{s}+ry^{\delta}_{s,u})R^{y^{\delta}}_{s,u}dr+\int_{0}^{1}\left[Dg(y^{\delta}_{s}+ry^{\delta}_{s,u})-Dg(y^{\delta}_{s})\right]g(y^{\delta}_{s})W_{\delta}(\cdot,\omega^{1})_{s,u}dr,\nonumber
\end{align}
the above identities hold   by  $y_{s,u}=y_s\omega_{s,u}+R^{y}_{s,u}$, $y^\delta_{s,u}=y^\delta_s\omega_{s,u}+R^{y^\delta}_{s,u}$,  and  Remark \ref{Rem4.2} shows the term $A+f$ should be contained in the remainder term $R^{y}$.  Hence, we have
\begin{align}\label{5.25}
\|R^{g(y)}_{s,u}\!-\!R^{g(y^{\delta})}_{s,u}\|&\!\leq\! \left\|\!\int_{0}^{1}\!Dg(y_{s}\!+\!ry_{s,u})R^{y}_{s,u}dr\!-\!\int_{0}^{1}\!Dg(y^{\delta}_{s}\!+\!ry^{\delta}_{s,u})R^{y^{\delta}}_{s,u}dr\right\|\nonumber\\
&~~+\left\|\int_{0}^{1}\left[Dg(y_{s}+ry_{s,u})-Dg(y_{s})\right]g(y_{s})\omega^{1}_{s,u}dr\right.\nonumber\\
&~~\left.-\int_{0}^{1}\!\left[Dg(y^{\delta}_{s}\!+\!ry^{\delta}_{s,u})-Dg(y^{\delta}_{s})\right]g(y^{\delta}_{s})W_{\delta}(\cdot,\omega^{1})_{s,u}dr\right\|\nonumber\\
&=:R_{1,g}+R_{2,g}.
\end{align}
For $R_{1,g}$, we  obtain
\begin{align}\label{5.26}
R_{1,g}&\leq \left\|\int_{0}^{1}\left(Dg(y_{s}+ry_{s,u})-Dg(y_{s}^{\delta}+ry_{s,u}^{\delta})\right)R^{y}_{s,u}dr\right\|\nonumber\\
&~~+\left\|\int_{0}^{1}Dg(y_{s}^{\delta}+ry_{s,u}^{\delta})(R^{y}_{s,u}-R^{y^{\delta}}_{s,u})dr\right\|\nonumber\\
&\leq 3C_{g}\|y-y^{\delta}\|_{\infty,[s,t]}\|R^{y}_{s,u}\|+C_{g}\|R^{y}_{s,u}-R^{y^{\delta}}_{s,u}\|\nonumber\\
&\leq 3C_{g}\|y-y^{\delta}\|_{\infty,[s,t]}\interleave R^{y}\interleave_{q-var,[s,t]^{2}}+C_{g}\interleave R^{y}-R^{y^{\delta}}\interleave_{q-var,[s,t]^{2}}.
\end{align}
For $R_{2,g}$, we have
\begin{align}\label{5.27}
R_{2,g}&\leq\left\|\int_{0}^{1}\!\left(\!Dg(y_{s}\!+\!ry_{s,u})\!-\!Dg(y_{s})\!-\!Dg(y^{\delta}_{s}\!+\!ry^{\delta}_{s,u})\!+\!Dg(y_{s}^{\delta})\right)g(y_{s})\omega^{1}_{s,u}dr\right\|\nonumber\\
&+\left\|\int_{0}^{1}\left[Dg(y_{s}^{\delta}+ry_{s,u}^{\delta})-Dg(y^{\delta}_{s})\right]\left[g(y_{s})\omega_{s,u}^{1}-g(y^{\delta}_{s})W_{\delta}(\cdot,\omega^{1})_{s,u}\right]dr\right\|\nonumber\\
&\leq \left\|\int_{0}^{1}\left(\!Dg(y_{s}\!+\!ry_{s,u})\!-\!Dg(y_{s})\!-\!Dg(y^{\delta}_{s}\!+\!ry^{\delta}_{s,u})\!+\!Dg(y_{s}^{\delta})\right)g(y_{s})\!\omega^{1}_{s,u}\!dr\right\|\nonumber\\
&+\left\|\int_{0}^{1}\left[Dg(y_{s}^{\delta}+ry_{s,u}^{\delta})-Dg(y^{\delta}_{s})\right]\left[(g(y_{s})-g(y^{\delta}_{s}))\omega^{1}_{s,u}\right]dr\right\|\nonumber\\
&+\left\|\int_{0}^{1}\left[Dg(y_{s}^{\delta}+ry_{s,u}^{\delta})-Dg(y^{\delta}_{s})\right]\left[g(y^{\delta}_{s})\left(\omega^{1}_{s,u}-W_{\delta}(\cdot,\omega^{1})_{s,u}\right)\right]dr\right\|\nonumber\\
&=:\mathcal{R}_{1}+\mathcal{R}_{2}+\mathcal{R}_{3}.
\end{align}
 We can estimate $\mathcal{R}_{1}$ as follows
\begin{align}\label{5.28}
\mathcal{R}_{1}&\leq C_{g}^{2}\|y_{s,u}-y_{s,u}^{\delta}\| \|\omega^{1}_{s,u}\|\nonumber\\
&~~+C_{g}^{2}\left[\|y_{s}-y_{s}^{\delta}\|+\|y_{s,u}-y_{s,u}^{\delta}\|\right]\left[\|y_{s,u}\|+\|y_{s,u}^{\delta}\|\right]\|\omega^{1}_{s,u}\|\nonumber\\
&\leq C_{g}^{2}\interleave y-y^{\delta}\interleave_{p-var,[s,t]}\|\omega^{1}\|_{p-var,[s,t]}+3C_{g}^{2}\|y-y^{\delta}\|_{\infty,[s,t]}\nonumber\\
&~~\times\left[\interleave y^{\delta}\interleave_{p-var,[s,t]}+\interleave y\interleave_{p-var,[s,t]}\right]\interleave\omega^{1}\interleave_{p-var,[s,t]},
\end{align}
where we use the following inequality to derive the first inequality:
\begin{align}\label{5.29}
\|h(u_{1})-h(v_{1})-h(u_{2})+h(v_{2})\|&\leq C_{h}\|u_{1}-v_{1}-u_{2}+v_{2}\|\nonumber\\
&+C_{h}\|u_{1}-u_{2}\|\left(\|u_{1}-v_{1}\|+\|u_{2}-v_{2}\|\right),
\end{align}
where $h$ is differentiable, see \cite[Lemma 7.1]{MR1893308}.
For $\mathcal{R}_{2}$ and $\mathcal{R}_{3}$, similar to \eqref{5.28}, we have
\begin{align}
\mathcal{R}_{2}&\leq C_{g}^{2}\|y_{s,u}^{\delta}\|\|y_{s}-y_{s}^{\delta}\|\|\omega^{1}_{s,u}\|\nonumber\\
&\leq C_{g}^{2}\interleave y^{\delta}\interleave_{p-var,[s,t]}\|y-y^{\delta}\|_{\infty,[s,t]}\interleave\omega^{1}\interleave_{p-var,[s,t]},\label{5.30}\\
\mathcal{R}_{3}&\leq C_{g}^{2}\|y_{s,u}^{\delta}\|\|\omega^{1}_{s,u}-W_{\delta}(\cdot,\omega^{1})_{s,u}\|\nonumber\\
&\leq C_{g}^{2}\interleave y^{\delta}\interleave_{p-var,[s,t]}\interleave\omega^{1}-W_{\delta}(\cdot,\omega^{1})\interleave_{p-var,[s,t]}.\label{5.31}
\end{align}
Together with \eqref{5.25}-\eqref{5.31}, we obtain the estimate of the first term of \eqref{5.24}:
\begin{small}
\begin{align}\label{5.32}
\|R^{g(y)}_{s,u}&-R^{g(y^{\delta})}_{s,u}\|\|\omega^{1}_{u,t}\|\leq \left[C_{g}^{2}\interleave y-y^{\delta}\interleave_{p-var,[s,t]}\right.\nonumber\\
&~~~~\left.+3C_{g}^{2}\|y-y^{\delta}\|_{\infty,[s,t]}\left(\interleave y^{\delta}\interleave_{p-var,[s,t]}+\interleave y\interleave_{p-var,[s,t]}\right)\right.\nonumber\\
&~~~~\left.+C_{g}^{2}\|y-y^{\delta}\|_{\infty,[s,t]}\interleave y^{\delta}\interleave_{p-var,[s,t]} \right]\interleave \omega^{1}\interleave^{2}_{p-var,[s,t]}\nonumber\\
&~~~~+\left[C_{g}^{2}\interleave \omega^{1}-W_{\delta}(\cdot,\omega^{1})\interleave_{p-var,[s,t]}\interleave y^{\delta}\interleave_{p-var,[s,t]}\right.\nonumber\\
&~~~~\left.+3C_{g}\|y-y^{\delta}\|_{\infty,[s,t]}\interleave R^{y}\interleave_{q-var,[s,t]^{2}}\right.\nonumber\\
&~~~~\left.+C_{g}\interleave R^{y}-R^{y^{\delta}}\interleave_{q-var,[s,t]^{2}}\right]\interleave\omega^{1}\interleave_{p-var,[s,t]}\nonumber\\
&~~\leq 8C_{g}^{2}\interleave \boldsymbol{\omega}\interleave^{2}_{p-var,[s,t]}\vee 8C_{g}\interleave \boldsymbol{\omega}\interleave_{p-var,[s,t]}\left(\interleave y^{\delta}\interleave_{p-var,[s,t]}\right.\nonumber\\
&~~~~\left.+\interleave y\interleave_{p-var,[s,t]}+\interleave R^{y}\interleave_{q-var,[s,t]^{2}}+1\right)\left(\interleave y-y^{\delta}\interleave_{p-var,[s,t]}\right.\nonumber\\
&~~~~\left.+\|y-y^{\delta}\|_{\infty,[s,t]}+\interleave R^{y}-R^{y^{\delta}}\interleave_{q-var,[s,t]^{2}}\right)\nonumber\\
&~~~~+C_{g}^{2}\interleave \omega^{1}\!-\!W_{\delta}(\cdot,\omega^{1})\interleave_{p\!-\!var,[s,t]}\interleave y^{\delta}\interleave_{p\!-\!var,[s,t]}\interleave \omega^{1}\interleave_{p\!-\!var,[s,t]}.
\end{align}
\end{small}
For the second term of  \eqref{5.24}, we have
\begin{align}\label{5.33}
&\|R^{g(y^{\delta})}_{s,u}\|\|\omega^{1}_{u,t}-W_{\delta}(\cdot,\omega^{1})_{u,t}\|\leq\interleave\omega^{1}-W_{\delta}(\cdot,\omega^{1})\interleave_{p-var,[s,t]}\nonumber\\&~~\times\biggr( \!C_{g}\!\interleave R^{y^{\delta}}\interleave_{q-var,[s,t]^{2}}+C_{g}^{2}\interleave y^{\delta}\interleave_{p-var,[s,t]}\interleave  \interleave W_{\delta}(\cdot,\omega^{1})\interleave_{p-var,[s,t]}\biggr).
\end{align}
Hence, by \eqref{5.32},\eqref{5.33} we obtain
\begin{align}\label{5.34}
\|B_{1,1}\|&\leq 8C_{g}^{2}\interleave \boldsymbol{\omega}\interleave^{2}_{p-var,[s,t]}\vee 8C_{g}\interleave \boldsymbol{\omega}\interleave_{p-var,[s,t]}\left(\interleave y^{\delta}\interleave_{p-var,[s,t]}\right.\nonumber\\
&~~\left.+\interleave y\interleave_{p-var,[s,t]}+\interleave R^{y}\interleave_{q-var,[s,t]^{2}}+1\right)\left(\interleave y-y^{\delta}\interleave_{p-var,[s,t]}\right.\nonumber\\
&~~\left.+\|y-y^{\delta}\|_{\infty,[s,t]}+\interleave R^{y}-R^{y^{\delta}}\interleave_{q-var,[s,t]^{2}}\right)\nonumber\\
&~~+(C_{g}^{2}\vee C_{g})\interleave \omega^{1}\!-\!W_{\delta}(\cdot,\omega^{1})\interleave_{p-var,[s,t]}\left((\interleave W_{\delta}(\cdot,\omega^{1})\interleave_{p-var,[s,t]}\right.\nonumber\\
&~~\left.+\interleave\omega^{1} \interleave_{p-var,[s,t]})\interleave y^{\delta}\interleave_{p-var,[s,t]}+\interleave R^{y^{\delta}}\interleave_{q-var,[s,t]^{2}}\right).
\end{align}
For $B_{1,2}$, using \eqref{5.29} we have
\begin{align}\label{3.35}
&\|B_{1,2}\|=\|(g^{\prime}(y_{\cdot}))_{s,u}\omega^{2}_{u,t}-(g^{\prime}(y^{\delta}_{\cdot}))_{s,u}\mathbb{W}_{\delta}(\omega^{1})_{u,t}\|\nonumber\\
&~\leq 2 C_{g}^{2}\|y^{\delta}_{s,u}\|\|\omega^{2}_{u,t}-\mathbb{W}_{\delta}(\omega^{1})_{u,t}\| \nonumber\\
&~~~~+\left[ 2C_{g}^{2}\|y_{s,u}-y^{\delta}_{s,u}\|+4C_{g}^{2}\|y_{u}-y_{u}^{\delta}\|\left(\|y_{s,u}\|+\|y_{s,u}^{\delta}\|\right)\right]\|\omega^{2}_{u,t}\|\nonumber\\
&~\leq \!2C_{g}^{2}\!\interleave\! y^{\delta}\!\interleave_{p-var,[s,t]}\interleave\omega^{2}\!-\!\mathbb{W}_{\delta}(\omega^{1})\interleave_{q-var,[s,t]^{2}}\!+4C_{g}^{2}\left[\interleave y-y^{\delta}\interleave_{p-var,[s,t]}\right.\nonumber\\ &~~~~\left.+\|y-y^{\delta}\|_{\infty,[s,t]}\left[\interleave y\interleave_{p-var,[s,t]}+\interleave y^{\delta}\interleave_{p-var,[s,t]}\right] \right]\interleave\omega^{2}\interleave_{q-var,[s,t]^{2}}\nonumber\\
&~\leq 4C_{g}^{2}\left[\interleave y-y^{\delta}\interleave_{p-var,[s,t]}+\|y-y^{\delta}\|_{\infty,[s,t]}\right]\nonumber\\
&~~~~\times\left[\interleave y \interleave_{p-var,[s,t]}+\interleave y^{\delta} \interleave_{p-var,[s,t]}+1\right]\interleave \omega^{2}\interleave_{q-var,[s,t]^{2}}\nonumber\\
&~~~~+2C_{g}^{2}\interleave y^{\delta} \interleave_{p-var,[s,t]}\interleave\omega^{2}-\mathbb{W}_{\delta}(\omega^{1})\interleave_{q-var,[s,t]^{2}}.
\end{align}
Combining  \eqref{5.34} and \eqref{3.35}, we get
\begin{small}
\begin{align}\label{5.36}
&B_{1}\leq 12C_{p}(C_{g}^{2}\interleave \boldsymbol{\omega}\interleave^{2}_{p-var,[s,t]}\vee C_{g}\interleave \boldsymbol{\omega}\interleave_{p-var,[s,t]})\nonumber\\
&~~~~~~\times\left(\interleave y^{\delta}\interleave_{p-var,[s,t]}+\interleave y\interleave_{p-var,[s,t]}+\interleave R^{y}\interleave_{q-var,[s,t]^{2}}+1\right)\nonumber\\
&~~~~~~\times\left(\interleave y-y^{\delta}\interleave_{p-var,[s,t]}+\|y-y^{\delta}\|_{\infty,[s,t]}+\interleave R^{y}-R^{y^{\delta}}\interleave_{q-var,[s,t]^{2}}\right)\nonumber\\
&~~~~~~+C_{p}(C_{g}^{2}\vee C_{g})\interleave \omega^{1}\!-\!W_{\delta}(\cdot,\omega^{1})\interleave_{p\!-\!var,[s,t]}\nonumber\\
&~~~~~~\times\left(\interleave y^{\delta}\interleave_{p-var,[s,t]}\interleave W_{\delta}(\cdot,\omega^{1})\interleave_{p-var,[s,t]}\!+\!\interleave R^{y^{\delta}}\interleave_{q\!-\!var,[s,t]^{2}}\right)\nonumber\\
&~~~~~~+2C_{g}^{2}C_{p}\interleave y^{\delta} \interleave_{p-var,[s,t]}\interleave\omega^{2}-\mathbb{W}_{\delta}(\omega^{1})\interleave_{q-var,[s,t]^{2}}.
\end{align}
\end{small}
For $B_{2}$, we have
\begin{align}\label{5.37}
B_{2}&=\|Dg(y_{s})g(y_{s})\omega^{2}_{s,t}-Dg(y^{\delta}_{s})g(y^{\delta}_{s})\mathbb{W}_{\delta}(\omega^{1})_{s,t}\|\nonumber\\
&\leq \|Dg(y_{s})g(y_{s})\!\!-\!\!Dg(y_{s}^{\delta})g(y_{s}^{\delta})\|\|\omega^{2}_{s,t}\|\!\!+\!\!\|Dg(y_{s}^{\delta})g(y_{s}^{\delta})\|\|\omega^{2}_{s,t}\!\!-\!\!\mathbb{W}_{\delta}(\omega^{1})_{s,t}\|\nonumber\\
&\leq 2C_{g}^{2}\|y\!\!-\!\!y^{\delta}\|_{\infty,[s,t]}\interleave\omega^{2}\interleave_{q-var,[s,t]^{2}}\!+\!C_{g}^{2}\interleave\omega^{2}\!-\!\mathbb{W}_{\delta}(\omega^{1}) \interleave_{q-var,[s,t]^{2}}.
\end{align}
Hence, we can get for $A_{2}$
\begin{small}
\begin{align}\label{5.38}
&A_{2}\leq 14C_{p}(C_{g}^{2}\interleave \boldsymbol{\omega}\interleave^{2}_{p-var,[s,t]}\vee C_{g}\interleave \boldsymbol{\omega}\interleave_{p-var,[s,t]})\nonumber\\
&~~~~~~\times\left(\interleave y^{\delta}\interleave_{p-var,[s,t]}+\interleave y\interleave_{p-var,[s,t]}+\interleave R^{y}\interleave_{q-var,[s,t]^{2}}+1\right)\nonumber\\
&~~~~~~\times\left(\interleave y-y^{\delta}\interleave_{p-var,[s,t]}+\|y-y^{\delta}\|_{\infty,[s,t]}+\interleave R^{y}-R^{y^{\delta}}\interleave_{q-var,[s,t]^{2}}\right)\nonumber\\
&~~~~~~+C_{p}(C_{g}^{2}\vee C_{g})\interleave \omega^{1}\!-\!W_{\delta}(\cdot,\omega^{1})\interleave_{p\!-\!var,[s,t]}\biggr(\interleave R^{y^{\delta}}\interleave_{q-var,[s,t]^{2}}\nonumber\\
&~~~~~~+\interleave y^{\delta}\interleave_{p\!-\!var,[s,t]}(\interleave W_{\delta}(\cdot,\omega^{1})\interleave_{p\!-\!var,[s,t]}\!+\!\interleave\omega^{1}\interleave_{p\!-\!var,[s,t]})\biggr)\nonumber\\
&~~~~~~+2C_g^2C_p[\interleave y^\delta\interleave_{p-var,[s,t]}+1]\interleave \omega^2-\mathbb{W}_\delta(\omega^1)\interleave_{q-var,[s,t]^2}.
\end{align}
\end{small}
By \eqref{5.19},\eqref{5.20}, \eqref{5.38}, we complete the proof.
\end{proof}
\begin{theorem}\label{theorem 7.2}
 Let $r\in(0,1)$, and $\tau\in \mathbb{R}, T>0$. For any sequence of  stopping times on interval $[\tau,\tau+T]$

 $$\tau_{0}=\tau,\quad\tau_{i+1}=\inf\{t>\tau_{i};\interleave\boldsymbol{\omega}\interleave_{p-var,[\tau_{i},t]}=r\}\wedge (\tau+T)$$
 and

$$\tau^{\delta}_{0}=\tau,\quad\tau^{\delta}_{i+1}=\inf\{t>\tau^{\delta}_{i};\interleave\boldsymbol{\omega}^{\delta}\interleave_{p-var,[\tau^{\delta}_{i},t]}=r\}\wedge (\tau+T),$$
then we have

$$\tau^{\delta}_{i}\rightarrow \tau_{i}$$
for any $i\in N_{\frac{1}{4C_{p}C_{g}},[\tau,\tau+T],p}(\boldsymbol{\omega})$.
 \end{theorem}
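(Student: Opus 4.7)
The plan is to argue by induction on $i$. The base case $\tau_0^\delta=\tau_0=\tau$ is immediate. For the inductive step, assume $\tau_j^\delta\to\tau_j$ as $\delta\to 0$ for all $j\leq i$ on the $\theta$-invariant set $\Omega^\prime$ of full measure supplied by Theorem \ref{theorem 4.6}; I will deduce $\tau_{i+1}^\delta\to\tau_{i+1}$ from two ingredients: uniform convergence of the $p$-variation functions with a moving left endpoint, and strict monotonicity of the limiting $p$-variation function at its crossing level.

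First I upgrade the H\"{o}lder convergence of Theorem \ref{theorem 4.6} to $p$-variation convergence. The standard embedding of $\beta$-H\"{o}lder rough paths into $p$-variation rough paths (applied componentwise to $\omega^1-W_\delta$ and $\omega^2-\mathbb{W}_\delta$) yields $\rho_{p-var,[-T,T]}(\boldsymbol{\omega}_\delta,\boldsymbol{\omega})\to 0$ on $\Omega^\prime$. Combining this with the inductive assumption and the joint continuity of $(s,t)\mapsto\interleave\boldsymbol{\omega}\interleave_{p-var,[s,t]}$, one obtains
\begin{equation*}
\sup_{t\in[\tau,\tau+T]}\left|\interleave\boldsymbol{\omega}_\delta\interleave_{p-var,[\tau_i^\delta,t]}-\interleave\boldsymbol{\omega}\interleave_{p-var,[\tau_i,t]}\right|\longrightarrow 0\quad\text{as }\delta\to 0.
\end{equation*}

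Next, set $F(t):=\interleave\boldsymbol{\omega}\interleave_{p-var,[\tau_i,t]}$. Continuity of $F$ together with the infimum definition of $\tau_{i+1}$ gives $F(t)<r$ strictly for every $t<\tau_{i+1}$. For $t>\tau_{i+1}$, Lemma \ref{Lemma 2.1} (superadditivity) yields
\begin{equation*}
F(t)^p\geq r^p+\interleave\boldsymbol{\omega}\interleave_{p-var,[\tau_{i+1},t]}^p,
\end{equation*}
and because the path component $\omega^1$ of the fractional Brownian motion is almost surely nowhere locally constant, its $p$-variation is strictly positive on every non-degenerate subinterval; hence $F(t)>r$ strictly for every $t>\tau_{i+1}$.

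Finally I conclude by contradiction. Suppose along a subsequence $\tau_{i+1}^{\delta_k}\to\tau^*\neq\tau_{i+1}$. If $\tau^*<\tau_{i+1}$, pick $t\in(\tau^*,\tau_{i+1})$: since $\tau_{i+1}^{\delta_k}<t$ for large $k$, right-endpoint monotonicity forces $\interleave\boldsymbol{\omega}_{\delta_k}\interleave_{p-var,[\tau_i^{\delta_k},t]}\geq r$, yet the uniform convergence above drives this quantity to $F(t)<r$, a contradiction. If $\tau^*>\tau_{i+1}$, pick $t\in(\tau_{i+1},\tau^*)$: strict monotonicity gives $F(t)>r$, but $t<\tau_{i+1}^{\delta_k}$ for large $k$ forces $\interleave\boldsymbol{\omega}_{\delta_k}\interleave_{p-var,[\tau_i^{\delta_k},t]}<r$ by the definition of $\tau_{i+1}^{\delta_k}$, again a contradiction. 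This closes the induction. The principal difficulty is the strict monotonicity in the previous paragraph: uniform convergence of $p$-variations alone cannot locate the hitting times if $F$ is allowed a plateau at height $r$, and ruling out such a plateau requires invoking the nowhere-locally-constant property of the fractional Brownian sample path on $\Omega^\prime$.
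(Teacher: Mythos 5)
Your proof is correct and follows essentially the same route as the paper's: a subsequence/contradiction argument resting on the convergence of $\interleave\boldsymbol{\omega}_\delta\interleave_{p-var}$ to $\interleave\boldsymbol{\omega}\interleave_{p-var}$ (upgraded from the H\"older convergence of Theorem \ref{theorem 4.6}) together with continuity and strict monotonicity of $t\mapsto\interleave\boldsymbol{\omega}\interleave_{p-var,[\tau_i,t]}$. You are in fact more careful than the paper in two spots it leaves implicit: the induction on $i$ needed because the left endpoints $\tau_i^\delta$ and $\tau_i$ differ, and the justification of strict monotonicity at the crossing level (via the superadditivity of Lemma \ref{Lemma 2.1} and the nowhere-local-constancy of the fractional Brownian sample path), which the paper simply asserts.
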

\begin{proof}
Suppose  $\tau^{\delta}_{i}\nrightarrow \tau_{i}$, in view of  $\tau^{\delta}_{i}\in [\tau,\tau+T]$, then  there exists a sequence $\{\delta_{n}\}_{n\in\mathbb{N}}$, such that $\lim_{n\rightarrow \infty}\tau_{i}^{\delta_{n}}=\tilde{\tau}_{i}\neq \tau_{i}.$  Theorem \ref{theorem 4.6} shows that
 $$\lim_{\delta\rightarrow 0}\interleave \boldsymbol{\omega}_{\delta}\interleave_{p-var,[\tau_{i-1},\tilde{\tau}_{i}]}=\interleave \boldsymbol{\omega}\interleave_{p-var,[\tau_{i-1},\tilde{\tau}_{i}]},$$
 then for any $\epsilon>0$, there exists  a constant $N_{1}(\tilde{\tau}_{i},\epsilon)>0$, such that $n>N_{1}(\tilde{\tau}_{i},\epsilon)$, we have
$$\left|\interleave \boldsymbol{\omega}_{\delta_n}\interleave_{p-var,[\tau_{i-1},\tilde{\tau}_{i}]}-\interleave \boldsymbol{\omega}\interleave_{p-var,[\tau_{i-1},\tilde{\tau}_{i}]}\right|\leq \frac{\epsilon}{2}.
$$
Furthermore, the $\interleave \boldsymbol{\omega}_{\delta}\interleave_{p-var,[\tau_{i-1},t]}$ as the function of $t$, it is continuous for variable $t$. Indeed, %since $\mathcal{C}^{\beta}(I;R^{d})\subset \mathcal{C}^{p-var}(I;R^{d}),\beta=\frac{1}{p}$, then
%
%$$\interleave \boldsymbol{\omega}^{\delta}\interleave_{p-var,[\tau_{i-1},t]}\leq C |t-\tau_{i-1}|^{\beta}\rightarrow 0\quad t\rightarrow\tau_{i-1} $$
%and
 for fixed $t_{0}>0$, define the following truncated function
$$
W_{\delta}^{t_{0}}\left(s, \omega^{1}\right)=\left\{\begin{array}{ll}
W_{\delta}\left(s, \omega^{1}\right), & s\leq t_{0};\\
W_{\delta}\left(t_{0}, \omega^{1}\right), & s > t_{0}.
\end{array}\right.
$$
 $\mathbb{W}_{\delta}^{t_{0}}$ is the second order process of a smooth rough path $\boldsymbol{\omega}_{\delta}^{t_{0}} =(W_{\delta}^{t_{0}}\left(\cdot, \omega^{1}\right),\mathbb{W}_{\delta}^{t_{0}})$ %which is generated by path $W_{\delta}^{t_{0}}\left(\theta_{s} \omega^{1}\right)$,
namely, according to the definition of the smooth second order process, we have that
 $$
\mathbb{W}^{t_{0}}_{\delta,s,u}\left( \omega^{1}\right)=\left\{\begin{array}{ll}
\mathbb{W}_{\delta,s,u}\left( \omega^{1}\right), & s<u\leq t_{0};\\
\mathbb{W}_{\delta,s,t_{0}}\left( \omega^{1}\right), & s \leq t_{0}<u;\\
0,& t_{0}\leq s<u.
\end{array}\right.
$$
Thus, for $t\geq t_{0}$, using Lemma \ref{Lemma 2.1} for the second order  process we have
\begin{align}
&\left|\interleave \boldsymbol{\omega}_{\delta}\interleave_{p-var,[0,t]}-\interleave \boldsymbol{\omega}_{\delta}\interleave_{p-var,[0,t_{0}]}\right|\nonumber\\
&~~= \left|\interleave \boldsymbol{\omega}^{\delta}\interleave_{p-var,[0,t]}- \interleave \boldsymbol{\omega}_{\delta}^{t_{0}}\interleave_{p-var,[0,t]}\right|\leq \interleave \boldsymbol{\omega}_{\delta}-\boldsymbol{\omega}^{t_{0}}_{\delta}\interleave_{p-var,[0,t]}\nonumber\\
%=&\left(\interleave W_{\delta}(\cdot,\omega^{1})-W_{\delta}^{t_{0}}\left(\cdot, \omega^{1}\right)\interleave^p_{p-var,[0,t]}+\interleave \mathbb{W}_{\delta}(\omega^{1})-\mathbb{W}^{\delta,t_{0}}(\omega^{1})\interleave^{q}_{q-var,[0,t]^{2}}\right)^{\frac{1}{p}}\nonumber\\
&~~=\left(\interleave W_{\delta}(\cdot,\omega^{1})\interleave^p_{p-var,[t_{0},t]}+2^{q-1} \interleave \mathbb{W}_{\delta}(\omega^{1})\interleave^{q}_{q-var,[t_{0},t]^{2}}\right)^{\frac{1}{p}}\nonumber\nonumber\\
&~~\leq \interleave W_{\delta}(\cdot,\omega^{1})\interleave_{p-var,[t_{0},t]}+2^{\frac{q-1}{p}} \interleave \mathbb{W}_{\delta}(\omega^{1})\interleave^{\frac{1}{2}}_{q-var,[t_{0},t]^{2}}\nonumber\\
&~~\leq  C(p,q)\interleave \boldsymbol{\omega}_{\delta}\interleave_{\beta,[t_0,t]}(t-t_0)^{\beta},
\end{align}
where we use $\mathcal{C}^{\beta}(I;R^{d})\subset \mathcal{C}^{p-var}(I;R^{d})$ to guarantee the last inequality holds.  Similarly, we can get
$$\left|\interleave \boldsymbol{\omega}_{\delta}\interleave_{p-var,[0,t]}-\interleave \boldsymbol{\omega}_{\delta}\interleave_{p-var,[0,t_{0}]}\right|\leq C(p,q)\interleave \boldsymbol{\omega}_{\delta}\interleave_{\beta,[t,t_0]}(t_0-t)^{\beta}.$$
In view of Theorem  \ref{theorem 4.6}, we know that  $\interleave \boldsymbol{\omega}_{\delta}\interleave_{\beta,[t,t_0]}$ and  $\interleave \boldsymbol{\omega}_{\delta}\interleave_{\beta,[t_0,t]}$ are uniform bounded with respect to $\delta$, then the continuity property is true.  Hence, there exists a constant $C>0$ such that
$$\left|\interleave \boldsymbol{\omega}_{\delta_{n}}\interleave_{p-var,[\tau_{i-1},\tau^{\delta_{n}}_{i}]}-\interleave \boldsymbol{\omega}_{\delta_{n}}\interleave_{p-var,[\tau_{i-1},\tilde{\tau}_{i}]}\right|\leq C|\tau^{\delta_{n}}_{i}-\tilde{\tau}_{i}|^\beta.
$$
Then for any $\epsilon>0$, there exists a constant $N_{2}(\tilde{\tau}_{i},\epsilon)$ such that for $n>N_{2}$, we have
$$|\tau^{\delta_{n}}_{i}-\tilde{\tau}_{i}|<\left(\frac{\epsilon}{2C}\right)^{\frac{1}{\beta}}.$$
Let $N=\max\{N_{1}(\tilde{\tau}_{i},\epsilon),N_{2}(\tilde{\tau}_{i},\epsilon)\}$, then for $n>N$, we obtain
\begin{small}
\begin{align}
&\left|\interleave \boldsymbol{\omega}_{\delta_{n}}\interleave_{p-var,[\tau_{i-1},\tau^{\delta_{n}}_{i}]}-\interleave \boldsymbol{\omega}\interleave_{p-var,[\tau_{i-1},\tilde{\tau}_{i}]}\right|\nonumber\\
&\leq \left|\interleave \boldsymbol{\omega}_{\delta_{n}}\interleave_{p-var,[\tau_{i-1},\tau^{\delta_{n}}_{i}]}-\interleave \boldsymbol{\omega}_{\delta_{n}}\interleave_{p-var,[\tau_{i-1},\tilde{\tau}_{i}]}\right|\nonumber\\&~~+\left|\interleave \boldsymbol{\omega}_{\delta_{n}}\interleave_{p-var,[\tau_{i-1},\tilde{\tau}_{i}]}-\interleave \boldsymbol{\omega}\interleave_{p-var,[\tau_{i-1},\tilde{\tau}_{i}]}\right|\nonumber\\
&\leq \epsilon.
\end{align}
\end{small}
Thus, we get
$$r=\lim_{n\rightarrow \infty}\interleave \boldsymbol{\omega}_{\delta_{n}}\interleave_{p-var,[\tau_{i-1},\tau^{\delta_{n}}_{i}]}=\interleave \boldsymbol{\omega}\interleave_{p-var,[\tau_{i-1},\tilde{\tau}_{i}]}.$$
By the variation norm as a function with variable $t$ is strictly increasing and continuous, and $\tilde{\tau}_{i}\neq \tau_{i}$, this is contradiction. Then for all sequences $\{\delta_{n}\}$, we have
$$\lim_{n\rightarrow \infty}\tau_{i}^{\delta_{n}}=\tau_{i},$$
namely,
$$\lim_{\delta\rightarrow 0}\tau_{i}^{\delta}=\tau_{i}.$$
\end{proof}
\small
\bibliographystyle{abbrv}
%\bibliography{references}

\end{document}